\font\tenmsb=msbm10 \textfont\msbfam=\tenmsb \font\sevenmsb=msbm7
\font\fivemsb=msbm5
\newcommand{\tmrsup}[1]{\textsuperscript{#1}}
\renewcommand{\leq}{\leqslant}
\renewcommand{\geq}{\geqslant}
\newcommand{\assign}{:=}
\newcommand{\backassign}{=:}
\newtheorem{axiom}{Assumption}
\newtheorem{theorem}{Theorem}[section]
\newtheorem{lemma}[theorem]{Lemma}
\newtheorem{proposition}[theorem]{Proposition}
\newtheorem{remark}[theorem]{Remark}}
\newcommand{\CC}{\mathcal{C} \hspace{.1em}}
\newcommand{\LL}{\mathcal{L}}
\newcommand{\VV}{\mathscr{V}}
\newcommand{\rmb}[1]{\textcolor{blue}{#1}}
\newcommand{\rmm}[1]{\textcolor{magenta}{#1}}
\newcommand{\rmg}[1]{\textcolor[rgb]{0.50,0.25,0.00}{#1}}
\newcommand{\R}{\mathbb{R}}
\newcommand{\N}{\mathbb{N}}
\newcommand{\Prec}{\prec\!\!\!\prec}
\newcommand{\8}{\infty}
\newcommand{\dd}{\mathrm{d}}
\numberwithin{equation}{section}
\newcommand{\tone}[1]{#1\!\tmrsup{\resizebox{.6em}{!}{\includegraphics{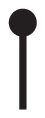}}}}
\newcommand{\tthree}[1]{#1\!\tmrsup{\resizebox{1.2em}{!}{\includegraphics{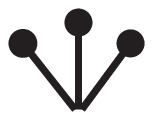}}}}
\newcommand{\ttwo}[1]{#1\!\tmrsup{\resizebox{1.2em}{!}{\includegraphics{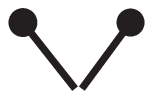}}}}
\newcommand{\ttthree}[1]{#1\!\tmrsup{\resizebox{1.2em}{!}{\includegraphics{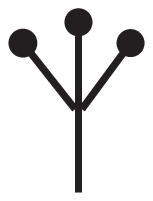}}}}
\newcommand{\tttwo}[1]{#1\!\tmrsup{\resizebox{1.2em}{!}{\includegraphics{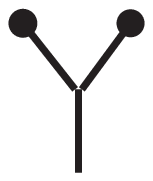}}}}
\newcommand{\ttthreetwo}[1]{#1\!\tmrsup{\resizebox{1.2em}{!}{\includegraphics{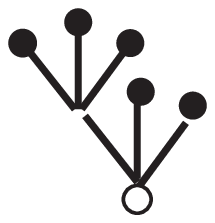}}}}
\newcommand{\tttwotwo}[1]{#1\!\tmrsup{\resizebox{1.2em}{!}{\includegraphics{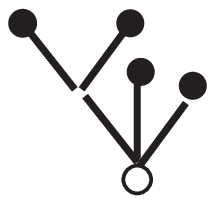}}}}
\newcommand{\ttthreeone}[1]{#1\!\tmrsup{\resizebox{1.2em}{!}{\includegraphics{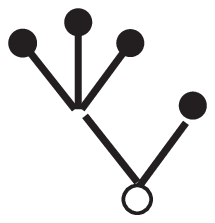}}}}
\def\R{{\mathbb R}}
\def\bc{\begin{center}}
\def\ec{\end{center}}
\def\no{\noindent}
\def\hang{\hangindent\parindent}
\def\textindent#1{\indent\llap{\qquad #1\ \ \enspace}\ignorespaces}
\def\ref{\par\hang\textindent}
\begin{document}

\title{ {\bf Weak universality of the dynamical  $\Phi_3^4$ model on the whole space
\thanks{Research supported in part  by  NSFC (No.11671035, No.11771037). Financial support by the DFG through
the CRC 1283 " Taming uncertainty and profiting from randomness and low regularity in analysis, stochastics
and their applications" is acknowledged.
}\\} }
\author{  {\bf Rongchan Zhu}$^{\mbox{a,c}}$, {\bf Xiangchan Zhu}$^{\mbox{b,c,d},}$\thanks{Corresponding author}
\date{}
\thanks{E-mail address:
zhurongchan@126.com(R. C. Zhu), zhuxiangchan@126.com(X. C. Zhu)}\\\\
\small
$^{\mbox{a}}$Department of Mathematics, Beijing Institute of Technology, Beijing 100081,  China\\
\small$^{\mbox{b}}$School of Science, Beijing Jiaotong University, Beijing 100044, China\\
\small$^{\mbox{c}}$Department of Mathematics, University of Bielefeld, D-33615 Bielefeld, Germany
\\\small $^{\mbox{d}}$ Academy of Mathematics and Systems Science,
Chinese Academy of Sciences, Beijing 100190, China}

\maketitle

\noindent {\bf Abstract} We prove the large scale convergence of a class of stochastic weakly nonlinear reaction-diffusion models on $\mathbb{R}^3$ to the dynamical $\Phi^4_3$ model by paracontrolled distributions on weighted Besov space. Our approach depends on the delicate choice of the weight, the localization operator technique and a modification version of the maximal principle from [GH18].

\vspace{1mm}
\no{\footnotesize{\bf 2000 Mathematics Subject Classification AMS}:\hspace{2mm} 60H15, 82C28}
 \vspace{2mm}

\no{\footnotesize{\bf Keywords}:\hspace{2mm}  $\Phi_3^4$ model,  paracontrolled distributions, weak universality, space-time white noise, renormalisation}

\section{Introduction}

Recall that the usual continuum Euclidean $\Phi^4_d$-quantum field  is heuristically described by the following
probability measure:
$$N^{-1}\Pi_{x\in\mathbb{R}^d}d\phi(x)\exp\bigg(-\int_{\mathbb{R}^d}(|\nabla \phi(x)|^2+\phi^2(x)+\phi^4(x))dx\bigg),$$
where $N$
is a normalization constant and $\phi$ is the real-valued field. There have been many approaches to the problem of
giving a meaning to the above heuristic measure for $d=2$ and $d=3$ (see  [GRS75, GlJ86] and references
therein).
In [PW81]  Parisi and Wu proposed a program for Euclidean quantum field
theory of getting Gibbs states of classical
statistical mechanics as limiting distributions of stochastic processes,  especially as solutions to non-linear stochastic differential
equations. Then one can use the stochastic differential equations to study the properties of the Gibbs states. This
procedure is called stochastic field quantization (see [JLM85]). The $\Phi_d^4$ model is the simplest non-trivial Euclidean quantum field (see [GLJ86] and the reference therein). The issue of the stochastic quantization of the $\Phi^4_d$ model is to solve the following equation:
\begin{equation}\aligned d \Phi= &( \Delta \Phi-\Phi^3)dt +dW(t) \quad\Phi(0)=\Phi_0.\endaligned\end{equation}
where $W$ is a cylindrical Wiener process on $L^2(\mathbb{R}^d)$. The solution $\Phi$ is also called  dynamical $\Phi^4_d$ model. (1.1) is ill-posed in both two and three dimensions.

In
two spatial dimensions, the dynamical $\Phi_2^4$ model was previously treated in [AR91], [DD03] and [MW15]. In three
spatial dimensions this equation (1.1) is ill-posed and the main difficulty in this case is that W
and hence the solutions are so singular that the non-linear term is not well-defined in the classical sense.
It was a long-standing open problem to give a meaning to equation (1.1) in the three dimensional
case. A breakthrough result was achieved recently by Martin Hairer in [Hai14], where he introduced
a theory of regularity structures and gave a meaning to  equation (1.1) successfully. He also proved existence and uniqueness of a local (in time) solution. By using the paracontrolled distributions proposed by Gubinelli, Imkeller and Perkowski
in [GIP15]  existence and uniqueness of local solutions to (1.1) has also been obtained in [CC13].
Recently, these two approaches have been successful in giving a meaning to several other ill-posed
stochastic PDEs like the Kardar-Parisi-Zhang (KPZ) equation ([KPZ86], [BG97], [Hai13]), the Navier-Stokes equation driven by space-time white
noise ([ZZ14, ZZ15]), the dynamical sine-Gordon equation ([HS14]) and so on (see [HP14]
for further interesting examples).

 Recently, a lot of results
concerning the dynamical $\Phi^4_3$ model have been obtained. In particular  the global well-posedness of the dynamical
$\Phi^4_2$ model on infinite volume  and the dynamical
$\Phi^4_3$ model on finite volume  have been obtained by Mourrat and
Weber in [MW15, MW17]. The relation between the solutions constructed in [MW15, MW17] and the associated Dirchlet form have been also considered in [RZZ17, ZZ18a].   More recently,
the global well-posedness have been proved in infinite volume case by Gubinelli and Hofmanov\'{a} in [GH18] by using paracontrolled distribution method on weighted Besov space. Lattice approximation to the dynamical $\Phi^4_3$ model on torus has been obtained in [ZZ18] and has been extended to infinite volume case to construct the Euclidean $\Phi^4_3$ field in [GH18a].

An interesting problem in the singular SPDE is called weak universality, which proves that the same limiting object describes the large scale behaviour of the solutions of more general equations. Weak universality  has been
first studied  by Hairer and Quastel in [HQ15] for the
Kardar--Parisi--Zhang equation. Later Hairer and
Xu in [HX16] proved a weak universality result with the limit given by the dynamical $\Phi^4_3$ model for three
dimensional reaction--diffusion equations driven by  Gaussian noise and a
polynomial non-linearity by the theory of regularity structures. Later, it has been extended to non-Gaussian noise by Shen and Xu in [SX16]. Moreover, it has been extended  to a large class
of non-linearity by Furlan and Gubinelli in [FG17] by paracontrolled distribution method. Most work in the literature consider the weak universality on finite volume case.   In [MP17] J. Martin and Perkowski also consider the weak universality for 2d PAM on infinite volume.
 In this paper we would like to extend the result in [FG17] to infinite volume case. We mainly use the localization operator technique (see Lemma 2.2)  and the modification of the maximal principle (see Lemma 2.4) from [GH18] to deduce the results.

In the following we recall the framework for the weak universality:
Consider the models in a weakly nonlinear regime:
\begin{equation}\LL u(t,x)=-\varepsilon^{-1}F_\varepsilon(u(t,x))+\eta(t,x),\quad (t,x)\in [0,T/\varepsilon^2]\times (\mathbb{R}/\varepsilon)^3\end{equation}
with
$\LL=\partial_{t} +(-\Delta+\mu)$, $\varepsilon\in (0,1], T>0$, initial condition $\bar{u}_{0,\varepsilon}: (\mathbb{R}/\varepsilon)^3\rightarrow\mathbb{R}$, $F_\varepsilon\in C^9(\mathbb{R})$, the condition of which will be given below. Here $\eta$ is a centered Gaussian noise with covariance
$\mathbb{E}(\eta(t,x)\eta(s,y))=\Sigma(t-s,x-y)$ if $|x-y|\leq 1$ and $0$ otherwise, where $\Sigma:\mathbb{R}\times \mathbb{R}^3\rightarrow\mathbb{R}^+$ is a smooth compactly supported in $[0,1]\times B(0,1)$. Set $u_\varepsilon=\varepsilon^{-1/2}u(t/\varepsilon^2,x/\varepsilon)$, $u_{0,\varepsilon}=\varepsilon^{-1/2}\bar{u}_{0,\varepsilon}(x/\varepsilon)$. By (1.2) we know
\begin{equation}\LL u_\varepsilon(t,x)=-\varepsilon^{-\frac{3}{2}}F_\varepsilon(\varepsilon^{\frac{1}{2}}u_\varepsilon(t,x))+\eta_\varepsilon(t,x),\end{equation}
with $\eta_\varepsilon(t,x):=\varepsilon^{-\frac{5}{2}}\eta(t/\varepsilon^2,x/\varepsilon)$ and initial value $u_{0,\varepsilon}$.
We follow the same approach as in [FG17] to expand the nonlinear term around the stationary solution $Y_\varepsilon$ to the following
linear equation
$$\LL Y_\varepsilon=\eta_\varepsilon.$$
The Gaussian r.v. $\varepsilon^{1/2}Y_\varepsilon(t,x)$ has variance $\sigma_\varepsilon=\varepsilon \mathbb{E}[(Y_\varepsilon(0,0))^2]$.
Then we can expand the random variable $F_\varepsilon(\varepsilon^{\frac{1}{2}}Y_\varepsilon(t,x))$ according to the chaos decomposition relative to $\varepsilon^{\frac{1}{2}}Y_\varepsilon(t,x)$ and obtain
$$F_\varepsilon(\varepsilon^{1/2} Y_\varepsilon(t,x))=\sum_{n\geq0} f_{n,\varepsilon}H_n(\varepsilon^{1/2}Y_\varepsilon(t,x),\sigma_\varepsilon^2),$$
where $H_n(x,\sigma_\varepsilon^2)$ are standard Hermite polynomials with variance $\sigma_\varepsilon^2$ and highest-order term normalized to $1$ and
$$f_{n,\varepsilon}=\frac{1}{n!\sigma_\varepsilon^{2n}}\mathbb{E}[F_\varepsilon(\varepsilon^{1/2}Y_\varepsilon(t,x))H_n(\varepsilon^{1/2}Y_\varepsilon(t,x),\sigma_\varepsilon^2)].$$

Let $$\tilde{F}_\varepsilon(x):=F_\varepsilon(x)-f_{0,\varepsilon}-f_{1,\varepsilon}x-f_{2,\varepsilon}H_2(x,\sigma_\varepsilon^2)=\sum_{n\geq3}f_{n,\varepsilon}H_n(x,\sigma_\varepsilon^2).$$
Let $\tilde{F}_{\varepsilon}^{ (k)}$ be the $k$-th derivative of the
function $\tilde{F}_{\varepsilon}$ for $k\in\mathbb{N}$ and define
the following $\varepsilon$--dependent constants:
\begin{equation}
  \begin{array}{lll}
    \tttwotwo{d_{\varepsilon}} & \assign & \frac{\varepsilon^{- 2}}{9}
    \int_{s, x} P_s (x) \mathbb{E} [\tilde{F}_{\varepsilon}^{ (1)}
    (\varepsilon^{1 / 2} Y_{\varepsilon} (s, x)) \tilde{F}_{\varepsilon}^{
    (1)} (\varepsilon^{1 / 2} Y_{\varepsilon} (0, 0))],\\
    \tttwotwo{\bar{d}_{\varepsilon}} & \assign & 2 \varepsilon^{- 1 / 2}
    f_{3, \varepsilon} f_{2, \varepsilon}  \int_{s, x} P_s (x)
    (C_{\varepsilon} (s, x))^2,\\
    \ttthreeone{d_{\varepsilon}} & \assign & \frac{\varepsilon^{- 2}}{6}
    \int_{s, x} P_s (x) \mathbb{E} [\tilde{F}_{\varepsilon}^{ (0)}
    (\varepsilon^{1 / 2} Y_{\varepsilon} (s, x))_{} \tilde{F}_{\varepsilon}^{
    (2)} (\varepsilon^{1 / 2} Y_{\varepsilon} (0, 0))],\\
    \ttthreetwo{d_{\varepsilon}} & \assign & \frac{\varepsilon^{- 5 / 2}}{3}
    \int_{s, x} P_s (x) \mathbb{E} [\tilde{F}_{\varepsilon}^{ (0)}
    (\varepsilon^{1 / 2} Y_{\varepsilon} (s, x))_{} \tilde{F}_{\varepsilon}^{
    (1)} (\varepsilon^{1 / 2} Y_{\varepsilon} (0, 0))],
  \end{array} \label{eq:d-constants}
\end{equation}
where $P_s (x)$ is the heat kernel corresponding to $\LL$ and $\int_{s, x}$ denotes integration on
$\mathbb{R}^+ \times \mathbb{R}^3$.

\begin{axiom}
  \label{a:main}Suppose the following assumptions:
\begin{itemize}

    \item $(u_{0, \varepsilon})_{\varepsilon}\subset \CC^{2+\gamma}(\rho^{\frac{3}{2}+\gamma_1})\cap L^\infty(\rho^{\frac{3+6\alpha}{2m}})$ converges in law to a limit
    $u_0$ in $\CC^{1+\alpha}(\rho^{\frac{3}{2}+\gamma'})\cap \CC^{\frac{1}{2}+\alpha}(\rho^{\frac{3+6\alpha}{2m}})$ with $\rho$ a polynomial weight and is independent of $\eta$, where $\alpha, \gamma_1,\gamma'$ will be given in Section 3;

    \item $(F_{\varepsilon})_{\varepsilon} \subseteq C^9 (\mathbb{R})$ and
    there exists constants $c, C > 0$ such that
    $$
      \sup_{\varepsilon, x} \sum_{k = 0}^9 | \partial_x^k F_{\varepsilon} (x)
      | \leq C e^{c | x |},
    $$

    \item \begin{equation}(F_{\varepsilon})_{\varepsilon} =C_{0,\varepsilon}x^{m}+G_\varepsilon,\end{equation}
    with $C_{0,\varepsilon}>0, |G_\varepsilon^{(m_1)}|\leq C_1$ for some $4\leq m_1\leq m$ and $m$  odd;
    \item the family of vectors $\lambda_{\varepsilon} = (\lambda_{0,
    \varepsilon}, \lambda_{1, \varepsilon}, \lambda_{2, \varepsilon},
    \lambda_{3, \varepsilon}) \in \mathbb{R}^4$ given by
    \begin{equation}
      \begin{array}{lllllll}
        \lambda_{3, \varepsilon} & \assign & f_{3, \varepsilon} &  &
        \lambda_{1, \varepsilon} & \assign & \varepsilon^{- 1} f_{1,
        \varepsilon} - 9 \tttwotwo{d_{\varepsilon}} - 6
        \ttthreeone{d_{\varepsilon}}\\
        \lambda_{2, \varepsilon} & \assign & \varepsilon^{- 1 / 2} f_{2,
        \varepsilon} &  & \lambda_{0, \varepsilon} & \assign & \varepsilon^{-
        3 / 2} f_{0, \varepsilon} - \varepsilon^{- 1 / 2} f_{2, \varepsilon}
        \ttthreeone{d_{\varepsilon}} - 3 \ttthreetwo{d_{\varepsilon}} - 3
        \tttwotwo{\bar{d}_{\varepsilon}}
      \end{array} \label{e:def-lambda}
    \end{equation}
    has a finite limit $\lambda = (\lambda_0, \lambda_1, \lambda_2, \lambda_3)
    \in \mathbb{R}^4$ as $\varepsilon \rightarrow 0$ and $\lambda_3>0$.
    \item There exists $\delta>0$ such that for every $x,y\geq0$
    \begin{equation}\aligned&(C_{0,\varepsilon}-\delta)x+(\lambda_{3}-\delta)y
    \\\geq&2\sum_{l=4, l \textrm{ even}}^{m_1-1} |c_{l,\varepsilon}|x^{\frac{l-3}{m-3}}y^{\frac{m-l}{m-3}}+2\sum_{l=4, l \textrm{ odd}}^{m_1-1} |a_{l,\varepsilon}\wedge0|x^{\frac{l-3}{m-3}}y^{\frac{m-l}{m-3}}+\frac{ 2C_1}{m_1!}x^{\frac{m_1-3}{m-3}}y^{\frac{m-m_1}{m-3}},\endaligned\end{equation}
    with  $b_{l,\varepsilon}=\frac{C_{0,\varepsilon}m!}{(m-l)!l!}$, $c_{l,\varepsilon}=\frac{1}{l!}\mathbb{E}[G_\varepsilon^{(l)}(\varepsilon^{1/2}Y_\varepsilon)]$, $a_{l,\varepsilon}=b_{l,\varepsilon}\mathbb{E}[(\varepsilon^{1/2}Y_\varepsilon)^{m-l}]+c_{l,\varepsilon}$.
  \end{itemize}
\end{axiom}

Under Assumption 1, existence and uniqueness of classical solution to  equation (1.3) have been obtained in Proposition B.2.

\begin{theorem}
  \label{t:maintheorem}Under Assumption 1 the family of random
  fields $(u_{\varepsilon})_{\varepsilon}$ given by the solution to
  eq. converges in probability and globally in time to a limiting random
  field $u (\lambda)$ in the space $C_T \CC^{- \kappa} (\rho^{\frac{3}{2}+\gamma_1})$ for
  every $1 / 2 < \kappa < 2 / 3$ and every $T>0$. The law of $u (\lambda)$ is the same as solution to the dynamic $\Phi^4_3$
  model with parameter vector $\lambda \in \mathbb{R}^4$ obtained in [GH18].
\end{theorem}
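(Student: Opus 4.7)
The plan is to follow the paracontrolled strategy of Furlan--Gubinelli [FG17] for the torus, implemented on $\mathbb{R}^3$ with the weighted--Besov machinery of Gubinelli--Hofmanov\'a [GH18]. First I would expand the rescaled nonlinearity $\varepsilon^{-3/2}F_\varepsilon(\varepsilon^{1/2}u_\varepsilon)$ around the stationary linear solution $Y_\varepsilon$ of $\mathcal{L}Y_\varepsilon=\eta_\varepsilon$. Writing $u_\varepsilon=Y_\varepsilon+w_\varepsilon$ and performing a Taylor expansion in $w_\varepsilon$ combined with the Hermite/Wick decomposition of each $F_\varepsilon^{(k)}(\varepsilon^{1/2}Y_\varepsilon)$ produces: (a) a leading cubic in $w_\varepsilon$ with coefficient $\lambda_{3,\varepsilon}$ together with Wick--product couplings of $w_\varepsilon^{j}$ with powers of $Y_\varepsilon$; (b) quadratic and linear terms whose coefficients are precisely the renormalised $\lambda_{2,\varepsilon}$ and $\lambda_{1,\varepsilon}$ of (1.6), the subtractions by $9d_\varepsilon$ and $6d_\varepsilon$ being exactly what cancels the divergent resonant contractions that the paracontrolled calculus will later produce; (c) a constant shift $\lambda_{0,\varepsilon}$ absorbing the $f_{0,\varepsilon}$ and $f_{2,\varepsilon}d_\varepsilon$ pieces; and (d) a higher--order Taylor remainder controlled by the uniform bound on $\sum_{k\leq 9}|F_\varepsilon^{(k)}|$ from Assumption~1. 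This rewrites (1.3) as a perturbation of a $\Phi^4_3$--type equation driven by $\eta_\varepsilon$ with parameters $\lambda_\varepsilon\to\lambda$.

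Second, I would prove convergence of the enhanced noise tower built from $Y_\varepsilon$, its Wick powers, and the resonant products appearing in the paracontrolled decomposition, jointly in probability, to the $\Phi^4_3$ enhanced noise of [GH18] in the relevant weighted Besov spaces $\mathcal{C}^{-1/2-\kappa}(\rho^{\sigma})$. Since each of these objects lives in a fixed Wiener chaos, hypercontractivity reduces the task to second--moment bounds for mollified/rescaled Gaussian kernels, and compact support of $\Sigma$ gives the required off--diagonal decay. Polynomial weights are handled through the localisation operator technique (Lemma~2.2). Crucially, the renormalisation constants that appear in this limit are exactly the $d_\varepsilon$ and $\bar d_\varepsilon$ of (1.4), so that the corrected $\lambda_{j,\varepsilon}$ in (1.6) possess finite limits.

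The hard step is uniform a priori bounds on the remainder $v_\varepsilon$ (the smoother part of $u_\varepsilon$ after the paracontrolled ansatz), strong enough to upgrade short--time convergence to global--in--time convergence on the whole space. Here I would mimic the $\Phi^4_3$ global argument of [GH18] but adapted to a polynomial nonlinearity of degree $m$. Schematically $v_\varepsilon$ satisfies
\begin{equation*}
\mathcal{L} v_\varepsilon + C_{0,\varepsilon}\,\varepsilon^{m/2-3/2}\,v_\varepsilon^{m} + \lambda_{3,\varepsilon}\,v_\varepsilon^{3} = R_\varepsilon(Y_\varepsilon,v_\varepsilon) + \text{err}_\varepsilon,
\end{equation*}
where $R_\varepsilon$ collects the cross terms with coefficients $b_{l,\varepsilon}, c_{l,\varepsilon}, a_{l,\varepsilon}$. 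Applying the modified maximum principle (Lemma~2.4) together with testing against a weighted cutoff, one controls each cross term $|a_{l,\varepsilon}||Y_\varepsilon|^{m-l}|v_\varepsilon|^{l-3}$ by $\delta v_\varepsilon^{m-1}+\delta v_\varepsilon^{2}+(\text{data})$ via Young's inequality with exponents $((m-3)/(l-3),(m-3)/(m-l))$ -- this is precisely the content of the coercivity assumption~(1.7), which is therefore seen to be the minimal hypothesis making the dissipation $C_{0,\varepsilon}v_\varepsilon^{m-1}+\lambda_{3,\varepsilon}v_\varepsilon^{2}$ absorb the full zoo of lower--order polynomial interactions. Schauder estimates then promote the resulting weighted $L^\infty$ bound to $\varepsilon$--uniform control of $v_\varepsilon$ in $C_T\mathcal{C}^{2+\gamma}(\rho^{3/2+\gamma_1})$.

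Finally, the joint convergence of the enhanced noise plus these uniform bounds, combined with continuity of the paracontrolled solution map of the $\Phi^4_3$ equation established in [GH18], identifies the limit as the solution $u(\lambda)$ of the dynamical $\Phi^4_3$ model with parameter $\lambda$, in $C_T\mathcal{C}^{-\kappa}(\rho^{3/2+\gamma_1})$ for every $1/2<\kappa<2/3$ and every $T>0$. The principal obstacle, as signalled above, is the intertwining of three difficulties absent in the torus cubic setting: the degree--$m$ polynomial nonlinearity producing many more counterterms than in pure $\Phi^4_3$; the whole--space setting forcing polynomial weights throughout; and the need to keep these weights compatible with the $m$--th power nonlinearity in the maximum--principle test. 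The modified maximum principle of [GH18] (Lemma~2.4), the localisation technique (Lemma~2.2), and the coercivity assumption~(1.7) are precisely what make this interplay work.
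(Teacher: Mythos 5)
Your outline follows the same overall route as the paper (expansion around the stationary solution $Y_\varepsilon$, convergence of the enhanced noise, uniform a priori bounds via the modified maximum principle and assumption (1.7), identification of the limit with the [GH18] solution), but the central analytic step is misstated in a way that would not close. You propose to apply the maximum principle (Lemma 2.4) to the full remainder $v_\varepsilon$, whose equation you write schematically with source $R_\varepsilon(Y_\varepsilon,v_\varepsilon)+\mathrm{err}_\varepsilon$. Lemma 2.4 requires the source to be bounded in $L^\infty_TL^\infty(\rho^{3m})$ \emph{uniformly in $\varepsilon$}, and the source of the $v_\varepsilon$-equation is not: it contains products such as $\ttwo{Y_\varepsilon}\,v_\varepsilon$ whose resonant part diverges as $\varepsilon\to 0$ unless it is renormalised through a paracontrolled ansatz. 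The paper therefore decomposes further, $v_\varepsilon=-\tttwo{\bar Y_\varepsilon}-\ttthree{Y_\varepsilon}+\phi_\varepsilon+\psi_\varepsilon$ with $\phi_\varepsilon$ paracontrolled by $\tttwo{Y_\varepsilon}$ (eq.\ (3.8)), splits the right-hand side into a ``magenta'' part $\Phi_\varepsilon$ driving $\phi_\varepsilon$ and a ``blue'' part $\Psi_\varepsilon$ driving $\psi_\varepsilon$, and applies the maximum principle \emph{only} to the $\psi_\varepsilon$-equation, whose left side carries $\lambda_3\psi_\varepsilon^3+C_{0,\varepsilon}\varepsilon^{\frac{m-3}{2}}\psi_\varepsilon^m+\sum_l(a_{l,\varepsilon}\vee 0)\varepsilon^{\frac{l-3}{2}}\psi_\varepsilon^l$ and whose source is genuinely controllable after interpolation. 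Without this splitting the coercivity argument you describe has nothing to act on. Relatedly, the claimed conclusion ``$\varepsilon$-uniform control of $v_\varepsilon$ in $C_T\CC^{2+\gamma}(\rho^{3/2+\gamma_1})$'' is not attainable: $v_\varepsilon$ contains $\ttthree{Y_\varepsilon}$, of regularity only $\tfrac12-\kappa$, and even $\psi_\varepsilon$ is only bounded in $C_T\CC^{2-\gamma}(\rho^{3/2+\gamma_1})$, with $\phi_\varepsilon$ and $\vartheta_\varepsilon$ carrying different (and carefully chosen) regularities and weights.

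A second omission is the mechanism that actually closes the system of estimates and makes the argument global. The bounds on $\phi_\varepsilon$, $\vartheta_\varepsilon$ and $\psi_\varepsilon$ are mutually coupled through the localisation parameter $L$ in $\VV_>,\VV_\le$, which the paper ties to the unknown itself via $2^{(1-\alpha-\kappa)L}\simeq 1+\|\psi_\varepsilon\|_{L^\infty L^\infty(\rho^{1/2+\alpha})}$, so that the large-$L$ gain on $\VV_>$ terms absorbs superlinear powers of $\psi_\varepsilon$ and the loss on $\VV_\le$ terms stays subcritical ($\|\psi_\varepsilon\|^{2+\delta}$ against the cubic dissipation). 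Moreover, the whole estimate is performed up to the stopping time $T_{\varepsilon,M}$ under the constraint $\varepsilon^{\epsilon/2}M\le 1$, and it is the \emph{quantitative} rates $\varepsilon^{\kappa/2}$ at which $Y^\emptyset_\varepsilon-\lambda_3$, $Y^{\emptyset,m-l}_{1,\varepsilon}$ and $Y^{\emptyset,l}_{2,\varepsilon}$ vanish (Theorem 3.1, events $\Omega^1_\varepsilon,\Omega^2_\varepsilon,\Omega^3_\varepsilon$) that allow these terms, multiplied by up to $m$ powers of $\phi_\varepsilon+\psi_\varepsilon$, to be beaten; a qualitative convergence statement for the enhanced noise, as in your second step, is not sufficient. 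Only after the resulting $M$-independent bounds force $T_{\varepsilon,M}=T$ does one obtain compactness and pass to the limit. These ingredients---the $\phi/\psi$ splitting, the $L$-versus-$\|\psi\|$ bookkeeping, and the stopping-time continuation fed by quantitative rates---are the substance of the proof and need to be supplied.
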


\begin{remark} i) In [GH18] the authors only consider  the dynamic $\Phi^4_3$ model with $\lambda_i=0$ for $i=1,2,4$ and $\lambda_3=1$.  By similar arguments the results also hold for general $\lambda$ with $\lambda_3>0$.

ii) Compared to the conditions in [FG17], which are mainly used to obtain the convergence of the renormalized term,   we add conditions (1.5) and (1.7), which  might be not necessary in finite volume case.  However, in infinite volume case we have to consider in weighted Besov space. (1.5) is a usual assumption to obtain a uniform estimate and helps us to get better weight for the solution (see Lemma 2.4).
The condition (1.7) is used to  obtain the uniform  estimate for $\psi^l$ in (4.6) on the weighted space. (1.7) can be easily deduced by Young's inequality for $\lambda_3$ and $C_{0,\varepsilon}$ large enough to control the other coefficient.
\end{remark}

 The structure of this paper is as follows. In Section 2 we recall some useful results from paracontrolled distribution method and prove a modification of the maximal principle in Lemma 2.4. In Section 3 we decompose the equation by paracontrolled distribution method. In Section 4 we obtain uniform estimates for the approximation.

\section{Preliminaries}

The space of Schwartz functions on $\R^{d}$ is denoted by $\mathcal{S}(\R^{d})$ and its dual, the space of tempered distributions is $\mathcal{S}'(\R^{d})$. The Fourier transform of $u\in \mathcal{S}'(\R^{d})$ is given by
$$
\mathcal{F}u(z)=\int_{\R^{d}}u(x)e^{-i z\cdot x}\,\dd x.
$$
We use $(\Delta_{i})_{i\geq -1}$ to denote the Littlewood--Paley blocks for a dyadic partition of unity.
 Set $\langle x\rangle=(1+|x|^2)^{1/2}$. In this paper we only consider $\rho=\langle x\rangle^{-\nu}$ for $\nu>0$.  For $\alpha\in\mathbb{R}$,  define the weighted Besov space $B^\alpha_{\infty,\infty}(\rho)\backassign \CC^\alpha(\rho)$ as the collection of all $f\in\mathcal{S}'(\R^{d})$ with finite norm
\[ \| f \|_{\CC^{\alpha} (\rho)} = \sup_{i\geq -1} 2^{i \alpha} \| \Delta_i f
   \|_{L^{\infty} (\rho)} = \sup_{i\geq -1} 2^{i \alpha} \| \rho \Delta_i f
   \|_{L^{\infty}} . \]
Moreover by [Tri06, Theorem 6.9], for $\alpha\in(0,1)$ we have the weighted space $\CC^\alpha(\rho)$ given by
\begin{equation}\|f\|_{L^\infty(\rho)}+\sup_{0<|h|\leq1}h^{-\alpha}\|f(\cdot+h)-f(\cdot)\|_{L^\infty(\rho)}.\end{equation}
More details can be found  e.g. in [Tri06].

Let $\rho$ be a polynomial  weight. Then the following embedding holds
\begin{equation}\label{eq1}
\CC^{\beta_1}(\rho^{\gamma_1})\subset \CC^{\beta_2}(\rho^{\gamma_2}) \quad\text{provided}\quad \beta_1\geq\beta_2,\ \gamma_1\leqslant \gamma_2,
\end{equation}
and by [Tri06, Theorem 6.31], the embedding in (2.1) is compact provided  $ \beta_1>\beta_2$ and $\gamma_1< \gamma_2$.

 $C_T\CC^{\alpha}(\rho)$ is the space of space-time distributions $f$ that are continuous in time with values in $\CC^{\alpha}(\rho)$, and have finite norm
\[
\|f\|_{C_T\CC^{\alpha}(\rho)}:=\sup_{0\leq t\leq T} \|(\rho f)(t)\|_{\CC^{\alpha}}.
\]
 If a mapping $f:[0,T]\to  \CC^{\alpha}(\rho) $ is only bounded but not continuous, we write $f\in L_T^{\infty}\CC^{\alpha}(\rho)$ with the norm
$$
\|f\|_{L_T^{\infty}\CC^{\alpha}(\rho)}:=\mathrm{esssup}_{0\leq t\leq T} \|(\rho f)(t)\|_{\CC^{\alpha}}<\infty.
$$
For $\alpha\in(0,1)$ and $\beta\in\R$ we denote by $C_T^{\alpha}\CC^{\beta}(\rho)$  the space of mappings $f:[0,T]\to  \CC^{\beta}(\rho) $ with finite norm
$$
\|f\|_{C_T^{\alpha}\CC^{\beta}(\rho)}:=\sup_{0\leq t\leq T} \|(\rho f)(t)\|_{\CC^{\beta}}+\sup_{0\leq s,t\leq T,s\neq t} \frac{\|(\rho f)(t)-(\rho f)(s)\|_{\CC^{\beta}}}{|t-s|^{\alpha}}.
$$

 We collect some useful results from [GH18]. The form is different but we can use similar method to obtain it.

\begin{lemma}
  \label{lemma:interp2}
Let $\kappa\in (0,1)$ and let $\rho$ be a polynomial weight.  We have, for any $\alpha \in [0, 2 - \kappa]$
  \[ \| \psi \|_{C_T\CC^{\alpha} (\rho_1)} \lesssim \| \psi
     \|_{L_T^{\8}L^{\infty} (\rho_2)}^{1 - \alpha / (2 - \kappa)} \| \psi \|_{C_T\CC^{2 -
     \kappa} (\rho_3)}^{\alpha / (2 - \kappa)} ,\]
     with $\rho_1=\rho_2^{1 - \alpha / (2 - \kappa)}\rho_3^{\alpha / (2 - \kappa)}$.
Moreover, if  $\alpha/2\notin\N_{0}$ then
       \[ \| \psi \|_{C_T^{\alpha/2}L^{\8} (\rho_1)} \lesssim \| \psi
     \|_{L_T^{\8}L^{\infty} (\rho_2)}^{1 - \alpha / (2 -\kappa)} \| \psi \|_{C_T^{(2-\kappa)/2}L^{\8} (\rho_3)}^{\alpha / (2 - \kappa)} .\]
\end{lemma}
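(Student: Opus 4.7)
\noindent\emph{Proof proposal.} The plan is to exploit the multiplicative structure of the weights, $\rho_1=\rho_2^{1-\theta}\rho_3^\theta$ with $\theta\assign\alpha/(2-\kappa)\in[0,1]$, which lets us apply H\"older's inequality pointwise on every Littlewood--Paley block. The only genuinely weighted ingredient is the uniform boundedness of the blocks on $L^\infty(\rho)$ for polynomial weights.

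For the first estimate, fix $t\in[0,T]$ and $j\geq-1$. Since for every $x$
\[ \rho_1(x)|\Delta_j\psi(t,x)| = \bigl(\rho_2(x)|\Delta_j\psi(t,x)|\bigr)^{1-\theta}\bigl(\rho_3(x)|\Delta_j\psi(t,x)|\bigr)^{\theta}, \]
taking the supremum in $x$ gives the pointwise interpolation
\[ \|\rho_1\Delta_j\psi(t)\|_{L^\infty}\leq \|\rho_2\Delta_j\psi(t)\|_{L^\infty}^{1-\theta}\|\rho_3\Delta_j\psi(t)\|_{L^\infty}^{\theta}. \]
Then I would invoke the standard fact that for polynomial weights $\|\rho\,\Delta_j f\|_{L^\infty}\lesssim\|\rho f\|_{L^\infty}$ uniformly in $j$ (this follows from $\rho(x)/\rho(y)\lesssim\langle x-y\rangle^{\nu}$ together with the rapid decay of the LP kernel $K_j$ at scale $2^{-j}$) to bound the first factor by $\|\psi(t)\|_{L^\infty(\rho_2)}$, while the second factor is controlled by $2^{-j(2-\kappa)}\|\psi(t)\|_{\CC^{2-\kappa}(\rho_3)}$. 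Multiplying by $2^{j\alpha}$ and using $\alpha=\theta(2-\kappa)$ makes the $j$-dependence cancel:
\[ 2^{j\alpha}\|\rho_1\Delta_j\psi(t)\|_{L^\infty}\lesssim \|\psi(t)\|_{L^\infty(\rho_2)}^{1-\theta}\|\psi(t)\|_{\CC^{2-\kappa}(\rho_3)}^{\theta}. \]
Taking suprema over $j\geq-1$ and $t\in[0,T]$ yields the claimed bound.

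For the second estimate, the same pointwise trick now applied to the time increment $\psi(t,x)-\psi(s,x)$ gives
\[ \rho_1(x)|\psi(t,x)-\psi(s,x)|\leq \bigl(\rho_2(x)|\psi(t,x)-\psi(s,x)|\bigr)^{1-\theta}\bigl(\rho_3(x)|\psi(t,x)-\psi(s,x)|\bigr)^{\theta}. \]
Bounding the first factor by $2\|\psi\|_{L_T^\infty L^\infty(\rho_2)}$ and the second by $|t-s|^{(2-\kappa)/2}\|\psi\|_{C_T^{(2-\kappa)/2}L^\infty(\rho_3)}$, dividing by $|t-s|^{\alpha/2}$ (the exponents again cancel because $\alpha=\theta(2-\kappa)$) and combining with the analogous interpolation for $\|\rho_1\psi(t)\|_{L^\infty}$ produces the second inequality. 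The condition $\alpha/2\notin\mathbb{N}_0$ is what guarantees that the left-hand norm is a genuine H\"older-in-time norm (and not, at the endpoints, an $L^\infty$ or $C^1$ norm requiring a different argument).

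The only subtle step is verifying the uniform boundedness of $\Delta_j$ on $L^\infty(\rho)$ for polynomial weights; everything else is just the H\"older inequality and bookkeeping of exponents. Since this boundedness is a classical fact (see e.g.\ Triebel, and it is used implicitly throughout [GH18]), I do not anticipate real difficulty; the main point of the lemma is to record the correct product structure $\rho_1=\rho_2^{1-\theta}\rho_3^\theta$ that will be used repeatedly later when interpolating between $L^\infty$-type and regularity-type bounds on the paracontrolled remainder.
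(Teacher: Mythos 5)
Your proposal is correct and follows essentially the intended argument: the paper gives no proof of Lemma 2.1, deferring to [GH18], and the proof there is exactly your blockwise H\"older interpolation (using $\rho_1=\rho_2^{1-\theta}\rho_3^{\theta}$ pointwise on each $\Delta_j\psi$, resp.\ on the time increments) combined with the uniform boundedness of the Littlewood--Paley blocks on $L^{\infty}(\rho)$ for polynomial weights. The exponent bookkeeping ($\alpha=\theta(2-\kappa)$ cancelling the $2^{j\alpha}$, resp.\ $|t-s|^{\alpha/2}$, factors) checks out, and you correctly isolate the weighted boundedness of $\Delta_j$ as the only non-trivial ingredient.
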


Let  $\sum_{k\geq-1} w_k = 1$ be a smooth dyadic  partition of unity on $\R^{3}$,
where $w_{-1}$ is supported in a ball containing zero and each $w_k$ for $k\geq0$ is supported on the annulus of size $2^k$.
 Let $(v_{\ell})_{\ell\geq -1}$ be a smooth dyadic partition of unity on $[0,\8)$ such that  $v_{-1}$ is supported in a ball containing zero and each $v_\ell$ for $\ell\geq0$ is supported on the annulus of size $2^\ell$.
 For a given sequence $(L_{k,\ell})_{k,\ell\geq -1}$  we define localization operators $\VV_{>},
\VV_{\leqslant}$  as in [GH18]
\begin{equation}
\VV_{>} f = \sum_{k,\ell} v_{\ell}w_k \Delta_{> L_{k,\ell}} f, \qquad
   \VV_{\leqslant} f = \sum_{k,\ell} v_{\ell}w_k \Delta_{\leqslant L_{k,\ell}} f,
   \end{equation}
where $\Delta_{> L_{k,\ell}}=\sum_{j;j>L_{k,\ell}}\Delta_j$ and $\Delta_{\leq L_{k,\ell}}=\sum_{j;j\leq L_{k,\ell}}\Delta_j$.

\begin{lemma}\label{lem:local2} ([GH18, Lemma 2.6])
Let $L>0, T>0$ be given and  let $\rho$ be a polynomial weight. There exists a (universal) choice of parameters $(L_{k,\ell})_{k,\ell\geq -1}$ such that for all $\alpha,\delta,\kappa>0$, $0\leq t\leq T$ and $a,b\in\R$ it holds true
      \[ \| \VV_{>} f \|_{C_t\CC^{- \alpha - \delta} (\rho^{- a})} \lesssim 2^{- \delta L} \| f \|_{C_t\CC^{- \alpha}
     (\rho^{-a+\delta})}, \qquad \| \VV_{\leqslant} f \|_{C_t\CC^{\kappa}
     (\rho^{b})} \lesssim 2^{(\alpha + \kappa)
     L} \| f \|_{C_t\CC^{- \alpha} (\rho^{b-\alpha-\kappa})}, \]
          where the proportional constant depends on $\alpha,\delta,\kappa,a,b$ but is independent of $f$.
\end{lemma}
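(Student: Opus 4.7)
The plan is to follow the approach of [GH18, Lemma 2.6], with the technical core being a suitable choice of the cutoff thresholds $L_{k,\ell}$ that links the Littlewood-Paley frequency scale to the spatial scale set by the polynomial weight $\rho=\langle x\rangle^{-\nu}$. Since on the support of $w_k$ one has $\rho(x)\sim 2^{-\nu k}$, I would set $L_{k,\ell}:=L+\nu k$ (nontrivial $\ell$-dependence is inessential for the $C_t$-norms considered here, because taking the sup in time picks out only finitely many $\ell$ at each point). With this choice the weight mismatch $\rho^{-\delta}(x)\sim 2^{\nu\delta k}$ on $\mathrm{supp}(w_k)$ is exactly balanced by the high-frequency gain, namely $2^{-\delta L_{k,\ell}}\cdot 2^{\nu\delta k}=2^{-\delta L}$, and symmetrically for the $\VV_{\leqslant}$ estimate one has $2^{(\alpha+\kappa)L_{k,\ell}}\cdot 2^{-\nu(\alpha+\kappa)k}=2^{(\alpha+\kappa)L}$.

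For the first inequality, I would fix $t\in[0,T]$ and a Littlewood-Paley index $j$, and estimate $2^{-(\alpha+\delta)j}\|\rho^{-a}\Delta_j \VV_{>} f(t)\|_{L^\infty}$ by decomposing into the sum over $k,\ell$. Writing $\Delta_j\VV_{>} f(t)=\sum_{k,\ell}v_\ell(t)\Delta_j\bigl(w_k\Delta_{>L_{k,\ell}}f(t)\bigr)$, the elementary bound
\[
\|\Delta_{>L_{k,\ell}}g\|_{\CC^{-\alpha-\delta}}\lesssim 2^{-\delta L_{k,\ell}}\|g\|_{\CC^{-\alpha}},
\]
applied to $g=\rho^{-a+\delta}(2^k)f(t,\cdot)$ in the relevant spatial window, combined with the localization of the weight on $\mathrm{supp}(w_k)$ where $\rho^{-a}(x)/\rho^{-a+\delta}(x)\sim 2^{\nu\delta k}$, yields the desired bound after summing the geometric series in $k$ (and the finite sum in $\ell$), convergence being ensured by $\delta>0$ and the bounded overlap of $w_k,v_\ell$. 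The second inequality is proved analogously, using the companion bound
\[
\|\Delta_{\leqslant L_{k,\ell}}g\|_{\CC^{\kappa}}\lesssim 2^{(\alpha+\kappa)L_{k,\ell}}\|g\|_{\CC^{-\alpha}}
\]
(a direct consequence of the Besov definition) together with the weight relation $\rho^{\alpha+\kappa}(x)\sim 2^{-\nu(\alpha+\kappa)k}$ on $\mathrm{supp}(w_k)$.

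The principal technical obstacle is that $\Delta_j$ does not commute with multiplication by the smooth spatial cutoff $w_k$, so one has to control $\|\Delta_j(w_k g)\|_{L^\infty(\rho^{-a})}$ carefully. I would handle this via a paraproduct decomposition $w_k g = T_{w_k}g+T_g w_k+R(w_k,g)$, exploiting that $w_k$ has spectral support in a ball of radius comparable to $2^k$ and satisfies $|\nabla^n w_k|\lesssim 2^{-nk}$ uniformly. This localization ensures that the spectral support of $w_k g$ essentially coincides with that of $g$ up to an $O(2^k)$-shift, so only terms with $j$ and $L_{k,\ell}$ in a compatible range contribute, and the resulting kernel decay is enough to absorb the mismatched frequencies. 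Once this commutation is in hand, the summation over $k,\ell$ reduces to the geometric balancing already described, and the universality of the parameters $(L_{k,\ell})$ in $\alpha,\delta,\kappa,a,b$ is manifest from the construction.
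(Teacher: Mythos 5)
The paper itself gives no proof of this lemma: it is quoted (with purely spatial weight and finite horizon $T$) from [GH18, Lemma 2.6], so your proposal can only be measured against the argument of that reference, and in outline it reconstructs it correctly. The heart of the matter is exactly the balancing you identify: on $\mathrm{supp}\,w_k$ one has $\rho\sim 2^{-\nu k}$, so the choice $L_{k,\ell}\approx L+\nu k$ together with the elementary cutoff bounds $\|\Delta_{>N}g\|_{\CC^{-\alpha-\delta}}\lesssim 2^{-\delta N}\|g\|_{\CC^{-\alpha}}$ and $\|\Delta_{\leq N}g\|_{\CC^{\kappa}}\lesssim 2^{(\alpha+\kappa)N}\|g\|_{\CC^{-\alpha}}$ gives precisely the factors $2^{-\delta L}$ and $2^{(\alpha+\kappa)L}$; note the two estimates pin $L_{k,\ell}$ down to $L+\nu k+O(1)$ from both sides. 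Dropping the $\ell$-dependence is legitimate here because the weight is purely spatial and $t\leq T<\infty$, so at each time only finitely many $v_\ell$ are active; this is the one place where the present statement is genuinely simpler than the time-weighted setting of [GH18].

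Two points in your write-up need repair. First, the spectral scale of $w_k$ is $2^{-k}$, not $2^{k}$: since $w_k(x)\approx w(2^{-k}x)$ (consistent with your bound $|\nabla^n w_k|\lesssim 2^{-nk}$), its Fourier transform concentrates at frequencies $\lesssim 2^{-k}$, with only rapidly decaying tails. If multiplication by $w_k$ could really shift frequencies by $O(2^k)$, the argument would fail for large $k$ whenever $\nu<1$, because then $2^{k}\gg 2^{L_{k,\ell}}$ and $w_k\Delta_{>L_{k,\ell}}f$ could acquire low-frequency content; the fact you actually need — multiplication by $w_k$ (and $v_\ell$) is bounded on every $\CC^\beta$ uniformly in $k$ and moves spectra only by $O(1)$, so that $\Delta_j(w_k\Delta_{>L_{k,\ell}}f)$ is negligible unless $j\gtrsim L_{k,\ell}$ — is true and follows from the derivative bounds you quote. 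Second, with the balanced choice $L_{k,\ell}=L+\nu k$ there is no exponent left over, so the sum over $k$ is not a geometric series: its convergence must come from the spatial almost-orthogonality of the annuli, i.e.\ from the rapid decay of the $\Delta_j$-kernels against the polynomially growing weight together with the finite overlap of the $w_k$ (and one should invoke the standard equivalence $\|h\|_{\CC^\beta(\rho)}\simeq\|\rho h\|_{\CC^\beta}$ for polynomial weights to justify treating the weight as the constant $2^{-\nu k}$ on each block). With these corrections your sketch matches the proof of [GH18, Lemma 2.6].
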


\begin{lemma}\label{lemma:sch} ([GH18, Lemma 2.10])
Let $\mu>0$, $\alpha\in\R$ and let $\rho$ be a  polynomial weight. Let $v$ and $w$ solve, respectively,
$$
\LL v=f,\quad v(0)=0,\quad\qquad \LL  w=0,\quad w(0)=w_{0}.
$$
Then  it holds uniformly over $0\leq t\leq T$
\begin{equation}\label{eq:1vw}
\|v(t)\|_{\CC^{2+\alpha}(\rho)}\lesssim \|f\|_{L_T^{\infty}\CC^{\alpha}(\rho)},\qquad
\|w(t)\|_{\CC^{2+\alpha}(\rho)}\lesssim \|w_{0}\|_{\CC^{2+\alpha}(\rho)},
\end{equation}
if $0\leqslant 2+\alpha < 2$ then
$$
\|v\|_{C_T^{(2+\alpha)/{2}}L^{\infty}(\rho)}\lesssim \|f\|_{L_T^{\infty}\CC^{\alpha}(\rho)},\qquad\|v\|_{C_T^{1}L^{\infty}(\rho)}\lesssim \|f\|_{C_T\CC(\rho)},
$$

$$
 \|w\|_{C_T^{(2+\alpha)/2}L^{\infty}(\rho)}\lesssim \|w_{0}\|_{\CC^{2+\alpha}(\rho)}.
$$
\end{lemma}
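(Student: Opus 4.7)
The plan is to derive all five estimates from Duhamel's formula together with block-wise heat-kernel estimates on weighted Besov spaces. Writing $P_t = e^{-t(-\Delta+\mu)}$ for the semigroup associated with the spatial part of $\LL$, we have $w(t) = P_t w_0$ and $v(t) = \int_0^t P_{t-s} f(s)\,ds$. The key technical ingredient is the block estimate
\[
\|\Delta_j P_t g\|_{L^\infty(\rho)} \lesssim e^{-c t 2^{2j}} \|\Delta_j g\|_{L^\infty(\rho)}, \qquad j \geq -1,\ t > 0,
\]
which combines three facts: the kernel of $\Delta_j P_t$ has Schwartz decay at scale $2^{-j}$ and $L^1$ norm $\lesssim e^{-ct 2^{2j}}$ uniformly for $j \geq 0$; for a polynomial weight $\rho = \langle \cdot\rangle^{-\nu}$, the ratio bound $\rho(x)/\rho(y) \lesssim \langle x - y\rangle^\nu$ is absorbed by this kernel decay; and the mass $\mu > 0$ furnishes an extra $e^{-ct\mu}$ that handles the low block $j = -1$. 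Since $P_t$ and $\Delta_j$ commute as Fourier multipliers, no genuine commutator argument is needed.

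For the spatial bounds (the first display of the statement), I would plug the block estimate into Duhamel and compute
\[
2^{j(2+\alpha)}\|\Delta_j v(t)\|_{L^\infty(\rho)} \lesssim 2^{j(2+\alpha)} \int_0^t e^{-c(t-s)2^{2j}} \|\Delta_j f(s)\|_{L^\infty(\rho)}\,ds \lesssim \|f\|_{L_T^\infty \CC^\alpha(\rho)},
\]
using $\int_0^t e^{-c(t-s)2^{2j}}\,ds \lesssim 2^{-2j}$ for $j \geq 0$ and the mass decay for $j = -1$, then taking the supremum in $j$. The estimate for $w$ is immediate from the same block bound applied to $P_t w_0$, with no time integration.

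For the time-regularity estimates I would use, for $0 \leq s < t \leq T$, the decomposition
\[
v(t) - v(s) = (P_{t-s} - I) v(s) + \int_s^t P_{t-r} f(r)\,dr.
\]
The first term is bounded by $|t-s|^{(2+\alpha)/2}\|v(s)\|_{\CC^{2+\alpha}(\rho)}$, obtained from $P_\tau - I = -\int_0^\tau (-\Delta + \mu) P_r\,dr$ applied block-by-block together with the spatial bound for $v$ just proved; the second term is bounded directly by $|t-s|\|f\|_{C_T \CC(\rho)}$, which yields the $C_T^1 L^\infty(\rho)$ estimate, and, interpolated against the $L^\infty$-in-$t$ bound, yields the fractional-in-time estimate whenever $2+\alpha < 2$. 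The assertion for $w$ uses only the first term and is identical in form.

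The main technical subtlety is that the polynomial weight $\rho$ does not commute exactly with $\Delta_j$ or with $P_t$; one must instead exploit $\rho(x)/\rho(y) \lesssim \langle x-y\rangle^\nu$ against Schwartz/Gaussian kernels to avoid losing weight exponents or accumulating $j$-dependent constants. Once this weight management is checked uniformly in $j$ and $t$, every inequality in the statement reduces to the classical unweighted Schauder argument, and the scheme of [GH18, Lemma 2.10] applies mutatis mutandis.
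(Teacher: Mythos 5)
The paper does not actually prove this lemma; it is quoted verbatim (up to notation) from [GH18, Lemma 2.10], so the only fair comparison is with the standard weighted Schauder argument that [GH18] uses. Your overall scheme is exactly that argument: the weighted block bound $\|\Delta_j P_t g\|_{L^\infty(\rho)}\lesssim e^{-ct2^{2j}}\|\Delta_j g\|_{L^\infty(\rho)}$ (Peetre's inequality $\rho(x)/\rho(y)\lesssim \langle x-y\rangle^{\nu}$ absorbed by the rescaled Schwartz kernel, the massive term handling $j=-1$), Duhamel, and the elementary integral $\int_0^t e^{-c(t-s)2^{2j}}ds\lesssim 2^{-2j}$. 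This correctly yields the two spatial estimates, and your treatment of $w$ and of the first term $(P_{t-s}-I)v(s)$ (via $P_\tau-I=-\int_0^\tau(-\Delta+\mu)P_r\,dr$ block-by-block, summable precisely because $0\leq 2+\alpha<2$) is sound.

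The gap is in how you assemble the time-regularity estimates for $v$. The fractional bound must read $\|v\|_{C_T^{(2+\alpha)/2}L^\infty(\rho)}\lesssim\|f\|_{L_T^\infty\CC^\alpha(\rho)}$ with $\alpha=(2+\alpha)-2<0$, so $f$ need not belong to $\CC^0(\rho)$ at all; your plan to bound the Duhamel tail by $|t-s|\,\|f\|_{C_T\CC(\rho)}$ and then ``interpolate against the $L^\infty$-in-$t$ bound'' produces an estimate in terms of the wrong norm of $f$ and therefore does not give the stated inequality. The correct (and, with your own block bound, easy) route is the smoothing estimate $\|P_\tau g\|_{L^\infty(\rho)}\lesssim \tau^{\alpha/2}\|g\|_{\CC^\alpha(\rho)}$, which upon integration gives $\bigl\|\int_s^t P_{t-r}f(r)\,dr\bigr\|_{L^\infty(\rho)}\lesssim |t-s|^{(2+\alpha)/2}\|f\|_{L_T^\infty\CC^\alpha(\rho)}$ exactly when $2+\alpha\in(0,2)$. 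Relatedly, your derivation of the $C_T^1L^\infty(\rho)$ bound is not closed: in your decomposition the first term is only controlled with exponent $(2+\alpha)/2<1$, so the two pieces together give at most the fractional regularity; reaching exponent $1$ needs a separate argument (e.g. writing $\partial_t v=-(-\Delta+\mu)v+f$ and using the spatial estimate for $v$ with the better norm of $f$), and one should note that the dyadic sum is borderline (logarithmic) at the exponent-$2$ endpoint, which is precisely why this last estimate requires the stronger norm $\|f\|_{C_T\CC(\rho)}$ rather than $\|f\|_{L_T^\infty\CC^\alpha(\rho)}$.
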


\subsection{Maximum principle}

We obtain the following maximum principle modified from [GH18, Lemma 2.12].

\begin{lemma}
  \label{lemma:apriori-parabolic}
 Let $\mu\in\R$ and let $\rho$ be a  polynomial weight. Fix $\kappa > 0$ and let $\psi\in L^{\infty}L^{\infty}(\rho^3)$ smooth enough be a classical
  solution to
  \[\LL  \psi + \lambda_3\psi^3 + C_0\varepsilon^{\frac{m-3}{2}} \psi^m +a_1\varepsilon^{\frac{l-3}{2}} \psi^l= \Psi,\qquad \psi(0)=\psi_{0} ,\]
  with $m\geq l\geq 5$ odd, $C_0, a_1\geq0$.
 Then the following a priori estimate holds for small $\delta>0$
  \[ (\lambda_3-\frac{\delta}{2})\| \psi  \|_{L_T^{\infty}L^{\infty} (\rho^m)}^3+a_0\varepsilon^{\frac{m-3}{2}} \|\psi\|_{L_T^{\infty}L^{\infty}(\rho^3)}^m \leq M_\delta+ 2\|\psi_{0}\|_{L^{\infty}(\rho^3)}^m+ 2\|\Psi\|_{L_T^{\infty}L^{\infty}(\rho^{3m})},
  \]
  where the constant $M_\delta$  is independent of $\varepsilon$.
  \end{lemma}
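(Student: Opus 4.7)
The plan is to argue by a weighted maximum principle in the spirit of [GH18, Lemma 2.12], modified to accommodate the two additional nonnegative higher-order nonlinearities $C_0\varepsilon^{(m-3)/2}\psi^m$ and $a_1\varepsilon^{(l-3)/2}\psi^l$. Since $3,l,m$ are all odd, the equation is invariant under $(\psi,\Psi,\psi_0)\mapsto(-\psi,-\Psi,-\psi_0)$, so it suffices to bound positive weighted maxima of $\psi$; negative minima are handled by the same argument applied to $-\psi$.

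For the $\|\psi\|_{L^\infty_T L^\infty(\rho^m)}^3$ part I set $\phi_1:=\psi\rho^m$. Since $\psi\in L^\infty_T L^\infty(\rho^3)$ and $m>3$, $\phi_1$ decays at spatial infinity, so its supremum over $[0,T]\times\mathbb{R}^3$ is attained at some $(t^*,x^*)$. At an interior maximum, $\partial_t\phi_1\geq 0$, $-\Delta\phi_1\geq 0$, $\nabla\phi_1=0$. Computing $\LL(\psi\rho^m)=\rho^m\LL\psi-2\nabla\psi\cdot\nabla\rho^m-\psi\Delta\rho^m$ and using the max identity $\nabla\psi=-\psi\rho^{-m}\nabla\rho^m$ together with the polynomial-weight bounds $|\nabla\rho|,|\Delta\rho|\lesssim\rho$, one obtains $\rho^m\LL\psi\geq -Cm^2\phi_1$ at $(t^*,x^*)$. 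Multiplying the PDE by $\rho^{2m}$ and substituting $\psi=\phi_1/\rho^m$, each $\psi^k\rho^{3m}=\phi_1^k\rho^{m(3-k)}\geq\phi_1^k$ for $k\in\{3,m,l\}$ since $k\geq 3$ and $\rho\leq 1$. Hence at $(t^*,x^*)$,
\[
\lambda_3\phi_1^3+C_0\varepsilon^{(m-3)/2}\phi_1^m+a_1\varepsilon^{(l-3)/2}\phi_1^l\leq \|\Psi\|_{L^\infty_T L^\infty(\rho^{3m})}+Cm^2\phi_1.
\]
Absorbing $Cm^2\phi_1\leq(\delta/2)\phi_1^3+C_\delta$ by Young and including the boundary case $t^*=0$, where $\phi_1(0)\leq\|\psi_0\|_{L^\infty(\rho^3)}$, yields the $\|\psi\|_{L^\infty_T L^\infty(\rho^m)}^3$ contribution.

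For the $a_0\varepsilon^{(m-3)/2}\|\psi\|_{L^\infty_T L^\infty(\rho^3)}^m$ part I apply the analogous argument to $\phi_2:=\psi\rho^3$; since $\phi_2$ need not decay at infinity, I first perturb the weight to $\rho^{3+\sigma}$ with $\sigma>0$ small, run the argument, and then let $\sigma\to 0$. The key algebraic identity at the max of $\phi_2$ is $\psi^m\rho^{3m}=\phi_2^m$ \emph{exactly}; combined with $\rho^3\LL\psi\geq-C\phi_2$ at the max and discarding of the nonnegative $\lambda_3\psi^3\rho^{3m}$ and $a_1\varepsilon^{(l-3)/2}\psi^l\rho^{3m}$ contributions, this gives
\[
C_0\varepsilon^{(m-3)/2}\phi_2^m\leq\|\Psi\|_{L^\infty_T L^\infty(\rho^{3m})}+C\phi_2.
\]
The linear term is absorbed on the left by a dichotomy: in the regime $\phi_2^{m-1}\geq 2C/(C_0\varepsilon^{(m-3)/2})$ the absorption is direct, yielding $C_0\varepsilon^{(m-3)/2}\phi_2^m\leq 2\|\Psi\|_{L^\infty_T L^\infty(\rho^{3m})}$; in the complementary regime one reintroduces the cubic contribution $\lambda_3\psi^3\rho^{3m}=\lambda_3\phi_2^3\rho^{3m-9}$ kept on the left-hand side at the max and exploits the already-established bound on $\|\psi\|_{L^\infty(\rho^m)}$ from the first step to control the factor $\rho^{3m-9}$, producing an $\varepsilon$-independent residue $M_\delta$ after a Young step. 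Combining with the $t=0$ contribution $\phi_2(0)^m\leq\|\psi_0\|_{L^\infty(\rho^3)}^m$ and symmetrizing finishes the argument.

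The main obstacle is precisely this last step, the $\varepsilon$-uniformity in the second estimate: a straight Young absorption of $C\phi_2$ into $C_0\varepsilon^{(m-3)/2}\phi_2^m$ produces a constant of order $\varepsilon^{-(m-3)/(2(m-1))}$ that blows up as $\varepsilon\to 0$. The dichotomy above, coupled to the cubic lower bound at the max of $\phi_2$ and the a priori $\|\psi\|_{L^\infty(\rho^m)}$-control from the first step, is what recovers the $\varepsilon$-independent $M_\delta$ and constitutes the essential modification of [GH18, Lemma 2.12] needed here.
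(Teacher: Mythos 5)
Your first step (the maximum principle for $\bar\psi=\psi\rho^m$ and the Young absorption of the linear error into the cubic term) is fine and coincides with the paper's estimate (2.5). The gap is in the second step. After multiplying the equation by $\rho^{3m}$ (which you must do to see $\|\Psi\|_{L_T^{\infty}L^{\infty}(\rho^{3m})}$), the zeroth--order errors at a positive maximum of $\tilde\psi=\psi\rho^3$ are $\mu\rho^{3m}\psi$, $\rho^{3m-3}(\Delta\rho^3)\psi$ and $2\rho^{3m-6}|\nabla\rho^3|^2\psi$, all of size $\lesssim\rho^{3m}|\psi|=\rho^{2m}\,\bar\psi\le\|\psi\|_{L_T^{\infty}L^{\infty}(\rho^m)}$. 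By throwing away the remaining weight and bounding them merely by $C\phi_2$ with $\phi_2=\psi\rho^3$, you create the $\varepsilon$--uniformity problem yourself, and your proposed repair does not close it: in the branch $C_0\varepsilon^{(m-3)/2}\phi_2^{m-1}<2C$, the cubic term you reinstate at the maximum equals $\lambda_3\phi_2^3\rho^{3m-9}(x^{*})=\lambda_3\bar\psi(t^{*},x^{*})^3$, which can be arbitrarily small compared with $C\phi_2(t^{*},x^{*})=C\,\bar\psi(t^{*},x^{*})\rho^{-(m-3)}(x^{*})$ when the maximum sits far out, and the step--one estimate only bounds $\|\bar\psi\|_{L_T^{\infty}L^{\infty}}$ from \emph{above}, which is the wrong direction for lower--bounding either $\rho^{3m-9}(x^{*})$ or the retained cubic term. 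Hence the ``small'' branch of your dichotomy is not controlled by an $\varepsilon$--independent $M_\delta$, and $\|\psi\|_{L_T^\infty L^\infty(\rho^3)}$ itself is not a priori bounded there.

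The repair is exactly what the paper does, and what your own computation yields before you weaken it: keep the error as $c_{\rho,\mu}\|\bar\psi\|_{L_T^{\infty}L^{\infty}}$, so the second estimate reads $C_0\varepsilon^{\frac{m-3}{2}}\|\psi\|^m_{L_T^{\infty}L^{\infty}(\rho^3)}\le\|\Psi\|_{L_T^{\infty}L^{\infty}(\rho^{3m})}+c_{\rho,\mu}\|\psi\|_{L_T^{\infty}L^{\infty}(\rho^m)}$; adding this to the first estimate and applying Young's inequality to the single linear term, $\|\psi\|_{L_T^\infty L^{\infty}(\rho^m)}\le\frac{\delta}{2}\|\psi\|^3_{L_T^\infty L^{\infty}(\rho^m)}+M_\delta$, absorbs it into the cubic part with constants independent of $\varepsilon$ --- no dichotomy is needed. (Equivalently one can absorb pointwise at the maximum: the error is $\le c\,\bar\psi(t^{*},x^{*})\le\frac{\lambda_3}{2}\bar\psi(t^{*},x^{*})^3+M$, against the cubic term $\lambda_3\bar\psi(t^{*},x^{*})^3$ you kept.) Your symmetrization via $-\psi$, the treatment of $t^{*}=0$, and the weight perturbation to guarantee attainment of the maximum are all fine.
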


\begin{proof}
Let $\bar\psi=\psi\rho^m, \tilde{\psi}=\psi\rho^3$.
Then we have
$$\aligned
&\rho^{2m}\partial_{t}\bar{\psi}+\rho^{2m}(-\Delta+\mu)\bar{\psi} +\lambda_3\bar{\psi}^{3}+C_0\varepsilon^{\frac{m-3}{2}} \bar{\psi}^m\rho^{-m^2+3m}+a_1\varepsilon^{\frac{l-3}{2}} \bar{\psi}^l\rho^{-ml+3m} \\=& \rho^{3m}\Psi  - \rho^{m}(\Delta \rho^m)\bar{ \psi} - 2\rho^{2m}\nabla\rho^m\nabla\psi.
\endaligned$$
Following the same argument as in the proof of [GH18, Lemma 2.12] we have
\begin{equation} \lambda_3\| \psi  \|_{L_T^{\infty}L^{\infty} (\rho^m)}^3\leq C+ \|\psi_{0}\|_{L^{\infty}(\rho^m)}^3+\| \psi  \|_{L_T^{\infty}L^{\infty} (\rho^m)}+ \|\Psi\|_{L_T^{\infty}L^{\infty}(\rho^{3m})}.
  \end{equation}

On the other hand, we assume that $\tilde\psi$ attains its (global) maximum $M=\tilde\psi(t^{*},x^{*})$ at the point $(t^{*},x^{*})$. If $M\leq0$, then it is necessary to consider  the maximum of $-\tilde\psi$, which we consider below. Let us therefore assume  that $M>0$. If $t_{*}=0$ then
$$
\tilde\psi\leqslant \|\psi_{0}\|_{L^{\infty}(\rho^3)}
$$
Assume that $0<t^{*}\leq T$.
Then
$$\aligned
&\rho^{3(m-1)}\partial_{t}\tilde{\psi}+\rho^{3(m-1)}(-\Delta+\mu)\tilde{\psi} +\lambda_3\tilde{\psi}^{3}\rho^{3(m-3)}+C_0\varepsilon^{\frac{m-3}{2}} \tilde{\psi}^m +a_1\varepsilon^{\frac{l-3}{2}} \tilde{\psi}^l\rho^{3m-3l} \\=& \rho^{3m}\Psi - \rho^{^{3(m-2)}}(\Delta \rho^3)\tilde{ \psi} - 2\rho^{3(m-1)}\nabla\rho^3\nabla\psi,
\endaligned$$
and
$$
\partial_{t}\tilde\psi(t^{*},x^{*})\geq 0,\qquad\nabla\tilde\psi(t^{*},x^{*})=0,\qquad\Delta\tilde\psi(t^{*},x^{*})\leq0
$$
hence $\rho^3\nabla\psi=-\psi\nabla\rho^3$ and $-\rho^{3(m-1)}\Delta\tilde\psi(t^{*},x^{*})\geq0$. Now we have
$$\aligned
C_0\varepsilon^{\frac{m-3}{2}} M^m\leqslant& \left[\rho^{3m}\Psi -\mu\rho^{2m}\bar\psi - \rho^{2m-3}(\Delta \rho^3) \bar{\psi} - 2\rho^{2m-6}|\nabla\rho^3|^2\bar{\psi}\right]_{|_{(t^{*},x^{*})}}
\\\leqslant& \|\Psi\|_{L_T^{\infty}L^{\infty}(\rho^{3m})} +c_{\rho,\mu}\|\bar{\psi}\|_{L_T^\infty L^\infty}.\endaligned
$$
Then we have
$$C_0\varepsilon^{\frac{m-3}{2}} \tilde{\psi}^m\leqslant \|\Psi\|_{L_T^{\infty}L^{\infty}(\rho^{3m})} +c_{\rho,\mu}\|\bar{\psi}\|_{L_T^\infty L^\infty}.$$
For $-\tilde{\psi}$ we have similar results, which implies that
\[ C_0\varepsilon^{\frac{m-3}{2}} \|\tilde{\psi}\|_{L_T^{\infty}L^{\infty}(\rho^3)}^m \leq  \|\Psi\|_{L_T^{\infty}L^{\infty}(\rho^{3m})} +c_{\rho,\mu}\|\bar{\psi}\|_{L_T^\infty L^\infty},
  \]
 Then combining the above estimate and (2.5)
and using Young's inequality the results follow.
\end{proof}

\subsection{Paracontrolled calculus}
\label{ssec:para}

Now we recall the following paraproduct introduced by Bony (see [Bon81]). In general, the product $fg$ of two distributions $f\in \mathcal{C}^\alpha, g\in \mathcal{C}^\beta$ is well defined if and only if $\alpha+\beta>0$. In terms of Littlewood-Paley blocks, the product $fg$ of two distributions $f$ and $g$ can be formally decomposed as
 $$fg=f\prec g+f\circ g+f\succ g,$$
 with $$f\prec g=g\succ f=\sum_{j\geq-1}\sum_{i<j-1}\Delta_if\Delta_jg, \quad f\circ g=\sum_{|i-j|\leq1}\Delta_if\Delta_jg.$$

We also collect the following results on paraproduct on weighted Besov space from [GH18].

\begin{lemma}\label{lem:para} [GH18, Lemma 2.14]
Let $\rho_{1},\rho_{2}$ be  weights  and $\beta\in\R$. Then  it holds
\begin{equation*}
\|f\prec g\|_{\CC^\beta(\rho_{1}\rho_{2})}\lesssim\|f\|_{L^\infty(\rho_{1})}\|g\|_{\CC^{\beta}(\rho_{2})},
\end{equation*}
and if $\alpha<0$ then
\begin{equation*}
\|f\prec g\|_{\CC^{\alpha+\beta}(\rho_{1}\rho_{2})}\lesssim\|f\|_{\CC^{\alpha}(\rho_{1})}\|g\|_{\CC^{\beta}(\rho_{2})}.
\end{equation*}
If  $\alpha+\beta>0$ then it holds
\begin{equation*}
\|f\circ g\|_{\CC^{\alpha+\beta}(\rho_{1}\rho_{2})}\lesssim\|f\|_{\CC^{\alpha}(\rho_{2})}\|g\|_{\CC^{\beta}(\rho_{2})}.
\end{equation*}
\end{lemma}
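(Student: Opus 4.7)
The plan is to adapt the classical Bony paraproduct estimates to the polynomial-weighted Besov setting, following the scheme of [GH18, Lemma 2.14]. The engine is a \emph{weight--kernel commutation}: for a polynomial weight $\rho=\langle x\rangle^{-\nu}$ one has the two-point comparability $\rho(x)/\rho(y)\lesssim (1+|x-y|)^{|\nu|}$, while the convolution kernels $K_k$ of $\Delta_k$ decay rapidly at scale $2^{-k}$. Consequently, for a factorisation $\rho=\rho_1\rho_2$ one obtains the uniform-in-$k$ estimate $\|\rho\,\Delta_k(uv)\|_{L^\infty}\lesssim \|u\|_{L^\infty(\rho_1)}\|v\|_{L^\infty(\rho_2)}$ whenever the product $uv$ is Fourier-supported in an annulus at scale $2^k$, and an analogous bound holds for each summand arising in the paraproduct decomposition.

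For the first estimate, decompose $f\prec g=\sum_{j\geq -1}(S_{j-1}f)(\Delta_j g)$. Each summand is Fourier-supported in an annulus of scale $2^j$, so $\Delta_k(f\prec g)$ picks up only the $O(1)$ terms with $|j-k|\leq N$. The weight-kernel commutation combined with the uniform bound $\|S_{j-1}f\|_{L^\infty(\rho_1)}\lesssim \|f\|_{L^\infty(\rho_1)}$ and $\|\Delta_j g\|_{L^\infty(\rho_2)}\lesssim 2^{-j\beta}\|g\|_{\CC^\beta(\rho_2)}$ delivers the bound at regularity $\beta$. For the second estimate with $\alpha<0$, the only change is to use $\|S_{j-1}f\|_{L^\infty(\rho_1)}\lesssim \sum_{i\leq j-2}2^{-i\alpha}\|f\|_{\CC^\alpha(\rho_1)}\lesssim 2^{-j\alpha}\|f\|_{\CC^\alpha(\rho_1)}$, where the geometric series converges precisely because $\alpha<0$; this contributes an extra factor $2^{-j\alpha}$ and raises the regularity from $\beta$ to $\alpha+\beta$. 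For the resonant term $f\circ g=\sum_{|i-j|\leq 1}\Delta_if\,\Delta_jg$, each summand is Fourier-supported in a \emph{ball} of radius $\lesssim 2^j$, hence $\Delta_k(f\circ g)$ collects only the terms with $j\gtrsim k-N$; the same weight-kernel commutation bounds each contribution by $2^{-i\alpha-j\beta}\|f\|_{\CC^\alpha(\rho_1)}\|g\|_{\CC^\beta(\rho_2)}$ in the appropriate weighted norm, and the sum $\sum_{j\geq k-N}2^{-j(\alpha+\beta)}$ converges thanks to $\alpha+\beta>0$, producing the desired $2^{-k(\alpha+\beta)}$ factor.

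The main technical obstacle is the weight-kernel commutation itself: one has to verify that the polynomial growth of $\rho_i(x)/\rho_i(y)$ is uniformly dominated by the decay of the rescaled kernel $2^{3k}K(2^k(x-y))$, with a constant independent of $k$ and of the base point $x$. This is routine for polynomial weights, but the bookkeeping must be carried out carefully so that the implicit constants neither blow up with $k$ nor degrade badly with the weight exponent $\nu$. Once this lemma is in hand, the three estimates follow by the standard unweighted Bony argument applied inside the $L^\infty$ norm weighted by $\rho_1\rho_2$.
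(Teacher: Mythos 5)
Your argument is correct: the two-point comparability $\rho(x)\lesssim\rho(y)(1+|x-y|)^{|\nu|}$ for polynomial weights together with the uniform integrability of the rescaled Littlewood--Paley kernels against $(1+|z|)^{|\nu|}$ gives the weighted analogue of $\|\Delta_k u\|_{L^\infty(\rho)}\lesssim\|u\|_{L^\infty(\rho)}$, after which the three estimates follow by the standard Bony support/summation analysis exactly as you describe; note the paper itself offers no proof but simply cites [GH18, Lemma 2.14], and your route is essentially the standard weighted adaptation used there. Incidentally, your version with $\|f\|_{\CC^\alpha(\rho_1)}$ in the resonant bound is the natural statement; the $\rho_2$ appearing twice on the right-hand side of the paper's third inequality is evidently a misprint.
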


\begin{lemma}\label{lem:com}[GH18, Lemma 2.16]
Let $\rho_{1}, \rho_{2}, \rho_{3}$ be  weights and let $\alpha\in (0,1)$ and $\beta,\gamma\in \R$ such that $\alpha+\beta+\gamma>0$ and $\beta+\gamma<0$. Then there exist a trilinear bounded operator $\mathrm{com}$ satisfying
$$
\|\mathrm{com}(f,g,h)\|_{\CC^{\alpha+\beta+\gamma}(\rho_{1}\rho_{2}\rho_{3})}\lesssim \|f\|_{\CC^\alpha(\rho_{1})}\|g\|_{\CC^\beta(\rho_{2})}\|h\|_{\CC^\gamma(\rho_{3})}
$$
and for smooth functions $f,g,h$
$$
\mathrm{com}(f,g,h)=(f\prec g)\circ h - f(g\circ h).
$$
\end{lemma}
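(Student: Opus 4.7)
The plan is to follow the classical Bony paraproduct commutator argument and carefully track the polynomial weights. Working first with smooth $f,g,h$, I would expand
$$
(f\prec g)\circ h = \sum_{|j-k|\leq 1} \Delta_j(f\prec g)\,\Delta_k h
$$
and use the spectral localization $\Delta_j(f\prec g)=\sum_{|j-\ell|\leq N}\Delta_j(S_{\ell-1}f\cdot \Delta_\ell g)$ to rewrite
$$
(f\prec g)\circ h - f(g\circ h) = \sum_{|j-k|\leq 1}\bigl\{\Delta_j(S_{j-1}f\cdot \Delta_j g) - f\cdot \Delta_j g\bigr\}\Delta_k h + R,
$$
where $R$ collects finitely many cross terms with $\ell\ne j$ that are treated identically. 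I would then split the curly bracket as
$$
\bigl[\Delta_j,\,S_{j-1}f\bigr]\Delta_j g \;+\; (S_{j-1}f-f)\,\Delta_j g .
$$

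The second piece is controlled by the elementary weighted estimate $\|S_{j-1}f-f\|_{L^\infty(\rho_1)}\lesssim 2^{-j\alpha}\|f\|_{\CC^\alpha(\rho_1)}$, which is valid since $\alpha\in(0,1)$ and which follows by expressing $S_{j-1}f - f = -\sum_{\ell \geq j-1}\Delta_\ell f$ and summing the geometric series. The first piece, the localization commutator $[\Delta_j, S_{j-1}f]\Delta_j g$, is handled by a first-order Taylor expansion of $S_{j-1}f$ against the (mean-zero) kernel of $\Delta_j$; since $\|\nabla S_{j-1}f\|_{L^\infty(\rho_1)}\lesssim 2^{j(1-\alpha)}\|f\|_{\CC^\alpha(\rho_1)}$, one gains an extra factor $2^{-j}$ from the kernel scale, yielding $\lesssim 2^{-j(\alpha+\beta)}\|f\|_{\CC^\alpha(\rho_1)}\|g\|_{\CC^\beta(\rho_2)}$. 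Multiplying by $\|\Delta_k h\|_{L^\infty(\rho_3)}\lesssim 2^{-k\gamma}\|h\|_{\CC^\gamma(\rho_3)}$ and using $|j-k|\leq 1$ gives a pointwise bound of order $2^{-j(\alpha+\beta+\gamma)}\|f\|_{\CC^\alpha(\rho_1)}\|g\|_{\CC^\beta(\rho_2)}\|h\|_{\CC^\gamma(\rho_3)}$ on each summand, measured in $L^\infty(\rho_1\rho_2\rho_3)$.

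Each summand is Fourier-supported in a ball of radius $\sim 2^j$, so the whole series is of the form $\sum_j u_j$ with $u_j$ ball-localized and $\|\rho_1\rho_2\rho_3\, u_j\|_{L^\infty}\lesssim 2^{-j(\alpha+\beta+\gamma)}$. The standard weighted Littlewood--Paley characterization then places this sum in $\CC^{\alpha+\beta+\gamma}(\rho_1\rho_2\rho_3)$ since $\alpha+\beta+\gamma>0$ ensures summability of the geometric series, which gives the claimed trilinear estimate. The operator $\mathrm{com}$ is then extended from smooth functions by density using this bound. The hypothesis $\beta+\gamma<0$ plays no role in the estimate itself; it simply identifies the regime where $(f\prec g)\circ h$ and $f(g\circ h)$ are both ill-defined individually, so that the commutator is a genuinely non-trivial object only definable through this cancellation.

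The main obstacle is verifying that the Taylor-expansion estimate for $[\Delta_j, S_{j-1}f]\Delta_j g$ survives the polynomial weights. This reduces to the property that $\rho_1$ is essentially constant on scales $\lesssim 2^{-j}$, the spatial support of the kernel of $\Delta_j$, so that factors of $\rho_1(x)^{-1}$ can be moved through the convolution with only an $O(1)$ multiplicative loss; this is precisely the compatibility of polynomial weights with Littlewood--Paley analysis that underlies the whole weighted theory of Lemma 2.5 and Lemma 2.6 recalled above. The remainder of the argument is standard bookkeeping.
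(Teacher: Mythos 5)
You should first note that the paper itself contains no proof of this statement: it is quoted directly from [GH18, Lemma 2.16], so there is no in-paper argument to compare against. Your strategy is the standard GIP15/GH18 commutator argument — a kernel/Taylor estimate for the localization commutator, a tail estimate for $S_{j-1}f-f$, and then summation of ball-supported blocks using $\alpha+\beta+\gamma>0$ — and the weighted ingredients you invoke (Bernstein-type bounds and the fact that a polynomial weight can be moved through convolution with the Schwartz kernels of $\Delta_j$, $S_{j-1}$ at $O(1)$ cost) are exactly the right ones; your observation that $\beta+\gamma<0$ is not used in the estimate itself is also consistent with the known proofs.

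There is, however, a concrete slip in the decomposition step. Your splitting of $\Delta_j(S_{j-1}f\,\Delta_j g)-f\,\Delta_j g$ into $[\Delta_j,S_{j-1}f]\Delta_j g+(S_{j-1}f-f)\Delta_j g$ is not an identity: since $[\Delta_j,S_{j-1}f]\Delta_j g=\Delta_j(S_{j-1}f\,\Delta_j g)-S_{j-1}f\,\Delta_j\Delta_j g$ and $\Delta_j\Delta_j\neq\Delta_j$, there is a leftover term $f(\Delta_j\Delta_j g-\Delta_j g)$ of size only $2^{-j\beta}\|f\|_{L^\infty(\rho_1)}\|g\|_{\CC^\beta(\rho_2)}$. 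Correspondingly, the cross terms $\ell=j\pm1$ that you relegate to $R$ are \emph{not} "treated identically": they contain the pieces $S_{\ell-1}f\,\Delta_j\Delta_\ell g\,\Delta_k h$, of order $2^{-j(\beta+\gamma)}$, which (precisely because $\beta+\gamma<0$) do not decay in $j$ and cannot be bounded on their own; they must cancel against the corresponding pieces of $-f\Delta_j g$ that your main bracket has already consumed. The standard fix is to write $f\Delta_j g=\sum_{|\ell-j|\leq1}f\,\Delta_j\Delta_\ell g$ and pair term by term in $\ell$, so that the whole bracket becomes $\sum_{|\ell-j|\leq N}[\Delta_j,S_{\ell-1}f]\Delta_\ell g+\sum_{|\ell-j|\leq1}(S_{\ell-1}f-f)\Delta_j\Delta_\ell g$, every summand now of order $2^{-j(\alpha+\beta)}$; after multiplying by $\Delta_k h$ your ball-localized summation argument goes through unchanged. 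A smaller caveat: smooth functions are not dense in $\CC^\alpha(\rho)$, so the final "extension by density" should instead be phrased through uniform bounds for mollified inputs together with convergence in a slightly weaker weighted topology, as in [GH18].
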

Moreover, we will make use of the time-mollified paraproducts as introduced in [GIP15, Section 5]. Let $Q:\R\to\R_{+}$ be a smooth function, supported in $[-1,1]$ and $\int_{\R}Q(s)\mathrm{d}s=1$, and for $i\geq -1$ define the operator $Q_{i}:C\CC^{\alpha}(\rho)\to C\CC^{\alpha}(\rho)$ by
$$
Q_{i}f(t)=\int_{\R}2^{2i}Q(2^{2i}(t-s))f(s\vee 0)\mathrm{d} s.
$$
Finally, we define the modified paraproduct of $f,g\in C\CC^{\alpha}(\rho)$ by
$$
f\Prec g := \sum_{i\geq -1}(S_{i-1}Q_{i}f)\Delta_{i} g,
$$
where $S_jf=\sum_{i\leq j-1}\Delta_if.$

\begin{lemma}\label{lem:5.1}[GH18, Lemma 2.18]
Let $\rho_{1},\rho_{2}$ be  polynomial  weights. Let $\alpha\in (0,1),$ $\beta\in \R$, and let $f\in C_T\CC^{\alpha}(\rho_{1})\cap C_T^{\alpha/2}L^{\infty}(\rho_{1})$ and $g\in C_T\CC^{\beta}(\rho_{2})$. Then
$$
\big\|[\LL,f\Prec] g\big\|_{C_T\CC^{\alpha+\beta-2}(\rho_{1}\rho_{2})}\lesssim \big( \|f\|_{C_T^{\alpha/2}L^{\infty}(\rho_{1})}+\|f\|_{C_T\CC^{\alpha}(\rho_{1})} \big)\|g\|_{C_T\CC^{\beta}(\rho_{2})},
$$
and
$$
\|f\prec g-f\Prec g\|_{C_T\CC^{\alpha+\beta}(\rho_{1}\rho_{2})}\lesssim \|f\|_{C_T^{\alpha/2}L^{\infty}(\rho_{1})}\|g\|_{C_T\CC^{\beta}(\rho_{2})}.
$$
\end{lemma}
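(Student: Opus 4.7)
The plan is to reduce both estimates to block-by-block bounds on the summands of the modified paraproduct, following the scheme of [GIP15, Section 5] but tracking polynomial weights as in [GH18, Lemmas 2.14, 2.16]. I would start with the second (easier) estimate since it contains the core mechanism that makes $\Prec$ behave better than $\prec$ under the heat operator. Writing
\[
f\prec g-f\Prec g=\sum_{i\geq -1} S_{i-1}(f-Q_{i}f)\,\Delta_{i}g,
\]
I would estimate $\|f(t)-Q_{i}f(t)\|_{L^{\infty}(\rho_{1})}$ using the definition of $Q_{i}$ as a smooth time-average over a window of width $2^{-2i}$: a change of variables puts this in the form $\int Q(r)[f(t)-f((t-2^{-2i}r)\vee 0)]\,dr$, and the time-Hölder bound yields $\lesssim 2^{-i\alpha}\|f\|_{C_{T}^{\alpha/2}L^{\infty}(\rho_{1})}$. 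Since $S_{i-1}$ is (uniformly in $i$) bounded on $L^{\infty}(\rho_{1})$ up to a harmless perturbation of the weight exponent, and $\|\Delta_{i}g\|_{L^{\infty}(\rho_{2})}\lesssim 2^{-i\beta}\|g\|_{C_{T}\CC^{\beta}(\rho_{2})}$, the $i$-th summand is frequency-localized in an annulus of size $2^{i}$, has $L^{\infty}(\rho_{1}\rho_{2})$-norm bounded by $2^{-i(\alpha+\beta)}$ times the claimed product of norms, and hence contributes $2^{i(\alpha+\beta)}\cdot 2^{-i(\alpha+\beta)}=O(1)$ to the $\CC^{\alpha+\beta}(\rho_{1}\rho_{2})$ norm.

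For the commutator estimate, I would first derive the Leibniz-type identity for $\LL=\partial_{t}-\Delta+\mu$:
\[
\LL(uv)=(\LL u)v+u(\LL v)-2\nabla u\cdot\nabla v-\mu uv.
\]
Applying this to $u=S_{i-1}Q_{i}f$, $v=\Delta_{i}g$, and using that $\Delta_{i}$ commutes with $\LL$ (both are constant-coefficient Fourier multipliers), the $u(\LL v)$ piece cancels against $(S_{i-1}Q_{i}f)\Delta_{i}\LL g$ in $f\Prec \LL g$, leaving
\[
[\LL,f\Prec]g=\sum_{i\geq -1}\Bigl[(\LL S_{i-1}Q_{i}f)\Delta_{i}g-2\nabla(S_{i-1}Q_{i}f)\cdot\nabla\Delta_{i}g-\mu(S_{i-1}Q_{i}f)\Delta_{i}g\Bigr].
\]
Each summand is still supported in frequencies of order $2^{i}$, so it suffices to bound each block in $L^{\infty}(\rho_{1}\rho_{2})$ by $2^{i(2-\alpha-\beta)}$ times the RHS norm. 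The key point, and the reason for introducing $Q_{i}$, is that time-mollification matches the parabolic scaling: $\|\partial_{t}Q_{i}f(t)\|_{L^{\infty}(\rho_{1})}\lesssim 2^{2i}\cdot 2^{-i\alpha}\|f\|_{C_{T}^{\alpha/2}L^{\infty}(\rho_{1})}=2^{(2-\alpha)i}\|f\|_{C_{T}^{\alpha/2}L^{\infty}(\rho_{1})}$, so $\partial_{t}S_{i-1}Q_{i}f$ has the same $\CC^{-\alpha}$-type scaling as a second spatial derivative of an element of $\CC^{\alpha}$. Combined with the Bernstein-type bounds $\|\Delta S_{i-1}f\|_{L^{\infty}(\rho_{1})}\lesssim 2^{(2-\alpha)i}\|f\|_{\CC^{\alpha}(\rho_{1})}$, $\|\nabla S_{i-1}Q_{i}f\|_{L^{\infty}(\rho_{1})}\lesssim 2^{(1-\alpha)i}\|f\|_{C_{T}\CC^{\alpha}(\rho_{1})}$ and $\|\nabla\Delta_{i}g\|_{L^{\infty}(\rho_{2})}\lesssim 2^{(1-\beta)i}\|g\|_{C_{T}\CC^{\beta}(\rho_{2})}$, each of the three families of blocks satisfies the desired $2^{(2-\alpha-\beta)i}$ bound; the $\mu$-term is lower-order and trivially absorbed.

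The main obstacle, and the only thing specific to the weighted setting, is the verification that the block operators $\Delta_{i},S_{i-1},\nabla,\partial_{t},\Delta,Q_{i}$ are compatible with the polynomial weight $\rho_{1}$ in a uniform way. Since $\rho_{1}$ is a polynomial weight, each Littlewood--Paley-type operator can be shown to be bounded on $L^{\infty}(\rho_{1})$ up to constants independent of $i$, using the rapid decay of the convolution kernels and the fact that polynomial weights are ``slowly varying'' in the sense $\rho_{1}(x)\lesssim\rho_{1}(x-y)\langle y\rangle^{N}$; this is precisely the mechanism already exploited in [GH18, Lemmas 2.14 and 2.16]. Once this weighted boundedness is in hand, the remaining estimates are the standard unweighted computations recorded above, and the conclusions follow by supping over $i$ after multiplying by $2^{i(\alpha+\beta-2)}$ and $2^{i(\alpha+\beta)}$, respectively.
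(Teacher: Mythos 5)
The paper does not prove this lemma at all—it simply imports it as [GH18, Lemma 2.18] (itself a weighted version of [GIP15, Lemma 5.1])—and your argument correctly reconstructs the standard proof used there: the telescoping identity $f\prec g-f\Prec g=\sum_i S_{i-1}(f-Q_if)\Delta_ig$ with the $2^{-i\alpha}$ gain from the time-Hölder norm over the window of width $2^{-2i}$, and for the commutator the Leibniz expansion of $\LL$ on each block $(S_{i-1}Q_if)\Delta_ig$ together with the parabolic scaling $\|\partial_tQ_if\|_{L^\infty(\rho_1)}\lesssim 2^{(2-\alpha)i}\|f\|_{C_T^{\alpha/2}L^\infty(\rho_1)}$ and Bernstein bounds, all compatible with polynomial weights by the usual kernel-decay argument. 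So your proposal is correct and follows essentially the same route as the cited source.
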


\section{Paracontrolled structure}

Write (1.2) in the following form
\begin{eqnarray*}
  \LL u_{\varepsilon} & = & - \varepsilon^{- \frac{3}{2}}
  \tilde{F}_{\varepsilon} (\varepsilon^{\frac{1}{2}} u_{\varepsilon} (t, x)) +
  \eta_{\varepsilon}\\
  &  & - \varepsilon^{- 3 / 2} f_{0, \varepsilon} - \varepsilon^{- 1} f_{1,
  \varepsilon} u_{\varepsilon} - \varepsilon^{- 3 / 2} f_{2, \varepsilon} H_2
  \left( \varepsilon^{\frac{1}{2}} u_{\varepsilon}, \sigma_{\varepsilon}^2
  \right) .
\end{eqnarray*}
We write $u_{\varepsilon} = Y_{\varepsilon} + v_{\varepsilon}$, and use a Taylor expansion of
$\tilde{F}_{\varepsilon} (\varepsilon^{1 / 2} Y_{\varepsilon} + \varepsilon^{1
/ 2} v_{\varepsilon})$ around $\varepsilon^{1 / 2} Y_{\varepsilon}$ up to the
third order to have
\begin{equation}
  \begin{array}{lll}
    \LL u_{\varepsilon} & = & \eta_{\varepsilon} - \varepsilon^{- \frac{3}{2}}
    \tilde{F}_{\varepsilon} (\varepsilon^{\frac{1}{2}} Y_{\varepsilon}) -
    \varepsilon^{- 1} \tilde{F}^{(1)}_{\varepsilon} (\varepsilon^{\frac{1}{2}}
    Y_{\varepsilon}) v_{\varepsilon} - \frac{1}{2} \varepsilon^{- \frac{1}{2}}
    \tilde{F}^{(2)}_{\varepsilon} (\varepsilon^{\frac{1}{2}} Y_{\varepsilon})
    v_{\varepsilon}^2 - \frac{1}{6} \tilde{F}^{(3)}_{\varepsilon}
    (\varepsilon^{\frac{1}{2}} Y_{\varepsilon}) v_{\varepsilon}^3\\
    &  & - \varepsilon^{- 3 / 2} f_{0, \varepsilon} - \varepsilon^{- 1} f_{1,
    \varepsilon} (Y_{\varepsilon} + v_{\varepsilon}) - \varepsilon^{- 1 / 2}
    f_{2, \varepsilon} (\llbracket Y_{\varepsilon}^2 \rrbracket + 2
    v_{\varepsilon} Y_{\varepsilon} + v_{\varepsilon}^2) - R_{\varepsilon}
    (v_{\varepsilon}) .
  \end{array} \label{eq:first-exp}
\end{equation}
where $\llbracket Y_{\varepsilon}^2 \rrbracket$ is the Wick product and $R_{\varepsilon} (v_{\varepsilon})$ is the remainder of the Taylor
series and we used that $H_2 (\varepsilon^{1 / 2} Y_{\varepsilon},
\sigma_{\varepsilon}^2) = \varepsilon \llbracket Y_{\varepsilon}^2
\rrbracket$.

Define the following random fields as in [GH17]:

\begin{equation}
  \begin{array}{lllllll}
    \LL Y_{\varepsilon} & \assign &\eta_{\varepsilon} &
    \hspace{4em} &  &  & \\
    \ttwo{\bar{Y}_{\varepsilon}} & \assign & \varepsilon^{- 1 / 2} f_{2,
    \varepsilon}  \llbracket Y_{\varepsilon}^2 \rrbracket &  & \LL
    \tttwo{\bar{Y}_{\varepsilon}} & \assign &
    \ttwo{\bar{Y}_{\varepsilon}},\\
    \tthree{Y_{\varepsilon}} & \assign & \varepsilon^{- \frac{3}{2}}
    \tilde{F}_{\varepsilon} (\varepsilon^{\frac{1}{2}} Y_{\varepsilon}) &  &
    \LL \ttthree{Y_{\varepsilon}} & \assign & \tthree{Y_{\varepsilon}},\\
    \ttwo{Y_{\varepsilon}} & \assign & \frac{1}{3} \varepsilon^{- 1}
    \tilde{F}^{(1)}_{\varepsilon} (\varepsilon^{\frac{1}{2}} Y_{\varepsilon})
    &  & \LL \tttwo{Y_{\varepsilon}} & \assign & \ttwo{Y_{\varepsilon}}\\
    \tone{Y_{\varepsilon}} & \assign & \frac{1}{6} \varepsilon^{- \frac{1}{2}}
    \tilde{F}^{(2)}_{\varepsilon} (\varepsilon^{\frac{1}{2}} Y_{\varepsilon})
    &  & {Y_{\varepsilon}^\emptyset} & \assign & \frac{1}{6}
    \tilde{F}^{(3)}_{\varepsilon} (\varepsilon^{\frac{1}{2}}
    Y_{\varepsilon})\\
    \tttwotwo{\bar{Y}_{\varepsilon}} & \assign &
    \tttwo{\bar{Y}_{\varepsilon}} \circ \ttwo{Y_{\varepsilon}} -
    \tttwotwo{\bar{d}_{\varepsilon}} &  &  \tttwotwo{Y_{\varepsilon}} &
    \assign & \tttwo{Y_{\varepsilon}} \circ \ttwo{Y_{\varepsilon}} -
     \tttwotwo{d_{\varepsilon}},\\
    \ttthreeone{Y_{\varepsilon}} & \assign & \ttthree{Y_{\varepsilon}} \circ
    \tone{Y_{\varepsilon}} - \ttthreeone{d_{\varepsilon}}, &  &
    \ttthreetwo{Y_{\varepsilon}} & \assign & \ttthree{Y_{\varepsilon}}
    \circ \ttwo{Y_{\varepsilon}} - \ttthreetwo{d_{\varepsilon}'}
    Y_{\varepsilon} - \ttthreetwo{d_{\varepsilon}},
  \end{array} \label{e:trees-def}
\end{equation}
with $Y_{\varepsilon}$ stationary solution, while $Y_{\varepsilon},
\ttthree{Y_{\varepsilon}}, \tttwo{Y_{\varepsilon}},
\tttwo{\bar{Y}_{\varepsilon}}$ have $0$ initial condition in $t = 0$.
$\tttwotwo{\bar{d}_{\varepsilon}}$, $\tttwotwo{d_{\varepsilon}}$,
$\ttthreeone{d_{\varepsilon}}$, $\ttthreetwo{d_{\varepsilon}'}$,
$\ttthreetwo{d_{\varepsilon}}$ are  $\varepsilon$-dependent constants
introduced in introduction and
\begin{equation}
  \begin{array}{lll}
    \ttthreetwo{d_{\varepsilon}'} & = & 2 \ttthreeone{d_{\varepsilon}} + 3
    \tttwotwo{d_{\varepsilon}} .
  \end{array} \label{e:ren-constraint}
\end{equation}

The enhanced noise $\mathbb{Y}_\varepsilon$ is constructed
 as
\begin{equation}
  \begin{array}{lll}
    \mathbb{Y}_\varepsilon
    & \assign & (Y_\varepsilon^\emptyset, \tone{Y_\varepsilon},  \ttwo{Y_\varepsilon},
    \ttwo{\bar{Y}_\varepsilon},  \ttthree{Y_\varepsilon},  \ttthreeone{Y_\varepsilon},
     \tttwotwo{Y_\varepsilon},  \tttwotwo{\bar{Y}_\varepsilon},
     \tttwotwo{Y_\varepsilon})
  \end{array} \label{e:def-ylambda}
\end{equation}

For $\kappa>0$ the homogeneities $\alpha_\tau \in \mathbb{R}$ are
given by
\[ \begin{array}{|c|c|c|c|c|c|c|c|c|c|c|}
     \hline
     Y_{\varepsilon}^{\tau} & = & {Y_{\varepsilon}^\emptyset} &
     \tone{Y_{\varepsilon}} & \ttwo{Y_{\varepsilon}} &
     \ttwo{\bar{Y}_{\varepsilon}} & \ttthree{Y_{\varepsilon}} &
     \ttthreeone{Y_{\varepsilon}} & \tttwotwo{Y_{\varepsilon}} &
     \tttwotwo{\bar{Y}_{\varepsilon}} & \ttthreetwo{Y_{\varepsilon}}\\
     \hline
    \alpha^\tau & = & -\kappa & - \frac{1}{2}-\kappa & - 1-\kappa & - 1-\kappa & \frac{1}{2}-\kappa & -\kappa & -\kappa & -\kappa & - \frac{1}{2}-\kappa\\
     \hline
   \end{array} \]

In order to identify interesting limits for equation, we
introduce $\forall \lambda = (\lambda_0, \lambda_1, \lambda_2, \lambda_3) \in
\mathbb{R}^4$ the enhanced noise $\mathbb{X} (\lambda)$ which is constructed
\begin{equation}
  \begin{array}{lll}
    \mathbb{X} (\lambda)
    & \assign & (\lambda_3, \lambda_3 X, \lambda_3 \ttwo{X}, \lambda_2
    \ttwo{X}, \lambda_3 \ttthree{X}, (\lambda_3)^2 \ttthreeone{X},
    (\lambda_3)^2 \tttwotwo{X}, \lambda_3 \lambda_2 \tttwotwo{X},
    (\lambda_3)^2 \tttwotwo{X})
  \end{array} \label{e:def-ylambda}
\end{equation}
where $X$ is the stationary solution to to the linear equation $\LL X =
\xi$ and $\xi$ is the time-space white noise on $\mathbb{R} \times
\mathbb{R}^3$ and other terms can be defined as in [GH18], i.e.
$$ \LL \ttthree{X} = \llbracket X^3 \rrbracket,\quad\ttthree{X}(0)=0,   \quad \LL
   \tttwo{X} = \llbracket X^2 \rrbracket, \quad \tttwo{X}(0)=0,$$
   $$ \LL \ttthree{X_\varepsilon} = \llbracket Y_\varepsilon^3 \rrbracket,\quad\ttthree{Y_\varepsilon}(0)=0,   \quad \LL
   \tttwo{X_\varepsilon} = \llbracket X_\varepsilon^2 \rrbracket, \quad \tttwo{X_\varepsilon}(0)=0,$$
      $$\ttthreeone{X}=\lim_{\varepsilon\to 0}\ttthree{X_{\varepsilon}}\circ Y_{\varepsilon},\quad
      \tttwotwo{X} = \lim_{\varepsilon\to 0}\tttwo{X_{\varepsilon}} \circ \llbracket Y_{\varepsilon}^2 \rrbracket -\frac{b_{\varepsilon}}{3}, \qquad
   \ttthreetwo{X} = \lim_{\varepsilon\to 0}\ttthree{X_{\varepsilon}} \circ \llbracket X_{\varepsilon}^2 \rrbracket -  b_{\varepsilon}X_{\varepsilon}, $$
   where $b_{\varepsilon}(t) = 3 \mathbb{E}[(\tttwo{X_{\varepsilon}} \circ \llbracket X_{\varepsilon}^2 \rrbracket )(t,0)]$ stands for a suitable renormalization constant.

 \begin{theorem}\label{thm:renorm43}
 Let $\rho(x)=\langle x\rangle^{-\nu}$ for some $\nu>0$. For every $\kappa,\sigma>0$ and some $\delta,\gamma>0$. Moreover, if $\tau_{\varepsilon}$ is an component in $\mathbb{Y}_\varepsilon$ with  $\tau$ be the corresponding component in $\mathbb{X}$  then
$\tau_{\varepsilon }\rightarrow \tau$ in $ C_T\CC^{\alpha_{\tau}}(\rho^{\sigma})\cap C_T^{\delta/2}\CC^{\alpha_{\tau}-\gamma}(\rho^{\sigma})$ a.s. as $\varepsilon\to0$.

Moreover, for every $0<\kappa\leq \frac{1}{2}-\frac{\epsilon}{2}, \epsilon>0$ and $G$ as in the introduction we have
$$\mathbb{P}[\|Y_\varepsilon^\emptyset-\lambda_3\|_{C_T\CC^{-\kappa-\epsilon}(\rho^\sigma)}\lesssim \varepsilon^\kappa ]\geq 1-C\varepsilon^{\epsilon/2}.$$
$$\mathbb{P}[\|G^{(l)}(\varepsilon^{1/2}Y_\varepsilon)-\mathbb{E}[G^{(l)}(\varepsilon^{1/2}Y_\varepsilon)]\|_{C_T\CC^{-\kappa-\epsilon}(\rho^\sigma)}\lesssim \varepsilon^\kappa, l=1,...,m_1-1 ]\geq 1-C\varepsilon^{\epsilon/2}.$$
$$\mathbb{P}[\|(\varepsilon^{1/2}Y_\varepsilon)^k-\mathbb{E}[(\varepsilon^{1/2}Y_\varepsilon)^k]\|_{C_T\CC^{-\kappa-\epsilon}(\rho^\sigma)}\lesssim \varepsilon^\kappa, k=1,...,m-1 ]\geq 1-C\varepsilon^{\epsilon/2}.$$
\end{theorem}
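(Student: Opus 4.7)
The plan is to follow the standard strategy for stochastic estimates of paracontrolled/regularity-structures objects, adapted to the weighted setting of [GH18]. Every component $\tau_{\varepsilon}$ of $\mathbb{Y}_{\varepsilon}$ is a polynomial functional of the Gaussian field $Y_{\varepsilon}$ (through the Hermite expansion $\tilde F_{\varepsilon}(\varepsilon^{1/2}Y_{\varepsilon})=\sum_{n\geq 3}f_{n,\varepsilon}H_n(\varepsilon^{1/2}Y_{\varepsilon},\sigma_{\varepsilon}^{2})$ and the resonance products in \eqref{e:trees-def}), and therefore lives in a fixed, finite inhomogeneous Wiener chaos with respect to $\eta_{\varepsilon}$. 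Nelson's hypercontractivity then reduces any $L^{p}(\Omega)$ estimate to an $L^{2}(\Omega)$ one, and by Gaussianity such $L^{2}$ bounds amount to estimating integrals of products of heat kernels over the graphs associated to the tree $\tau$.

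I would carry out the proof in three moves. First, for each $\tau$ establish the pointwise second-moment bounds
\[
\mathbb{E}|\Delta_{j}\tau_{\varepsilon}(t,x)|^{2}\lesssim 2^{-2\alpha_{\tau}j},\qquad
\mathbb{E}|\Delta_{j}(\tau_{\varepsilon}(t,x)-\tau_{\varepsilon}(s,x))|^{2}\lesssim |t-s|^{\delta}2^{(2\gamma-2\alpha_{\tau})j},
\]
together with an analogous bound on $\tau_{\varepsilon}-\tau$ which vanishes with $\varepsilon$ (dominated convergence on the chaos kernels, as in the finite-volume proofs of [HX16, FG17]). This is the only place where Assumption~1 enters in a non-trivial way: the counterterms $d_{\varepsilon}^{\tau}$ of \eqref{eq:d-constants} are precisely those that cancel the diverging diagonals of the resonance products $\tttwotwo{Y_{\varepsilon}}$, $\tttwotwo{\bar Y_{\varepsilon}}$, $\ttthreeone{Y_{\varepsilon}}$ and $\ttthreetwo{Y_{\varepsilon}}$, and the identification of the limit with the $\mathbb{X}(\lambda)$-object requires reading off the leading chaos contribution of $\tilde F_{\varepsilon}(\varepsilon^{1/2}Y_{\varepsilon})$ as $\lambda_{3}H_{3}(Y_{\varepsilon},\sigma_{\varepsilon}^{2})$ modulo lower-order terms, using the finite-limit assumption \eqref{e:def-lambda}.

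Second, transfer these pointwise bounds into the weighted norm. Since the bound on $\mathbb{E}|\Delta_{j}\tau_{\varepsilon}(t,x)|^{2}$ is uniform in $x$, hypercontractivity gives the same bound with exponent $q$ in place of $2$, and for $q$ large enough that $\int\rho^{q\sigma}\,dx<\infty$
\[
\mathbb{E}\|\rho^{\sigma}\Delta_{j}\tau_{\varepsilon}(t)\|_{L^{q}}^{q}\leq\int\rho^{q\sigma}(x)\,\mathbb{E}|\Delta_{j}\tau_{\varepsilon}(t,x)|^{q}\,dx\lesssim 2^{-\alpha_{\tau}qj}.
\]
A Kolmogorov--Chentsov argument in time plus the Besov embedding $B^{\alpha+3/q}_{q,q}(\rho^{\sigma})\hookrightarrow\CC^{\alpha}(\rho^{\sigma})$ then yields uniform moment bounds in $C_{T}^{\delta/2}\CC^{\alpha_{\tau}-\gamma}(\rho^{\sigma})$. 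Applying exactly the same estimates to the difference $\tau_{\varepsilon}-\tau$ and invoking the compact embedding \eqref{eq1} to move from $\CC^{\alpha_{\tau}}(\rho^{\sigma/2})$ into $\CC^{\alpha_{\tau}-\gamma}(\rho^{\sigma})$, I obtain convergence in probability, upgraded to almost-sure convergence on a subsequence; the moment bound together with Borel--Cantelli then gives a.s.\ convergence of the full family in $C_{T}\CC^{\alpha_{\tau}}(\rho^{\sigma})\cap C_{T}^{\delta/2}\CC^{\alpha_{\tau}-\gamma}(\rho^{\sigma})$.

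The three final tail estimates are corollaries of the same computation. Each quantity inside the norm is a centered polynomial in $\varepsilon^{1/2}Y_{\varepsilon}$ of Wiener-chaos degree at most $m-1$, and carries an extra factor $\varepsilon^{1/2}$ per monomial (for $Y_{\varepsilon}^{\emptyset}-\lambda_{3}$ this follows from Taylor expansion of $\tilde F_{\varepsilon}^{(3)}/6$ at $0$ together with $\tilde F_{\varepsilon}^{(3)}(0)/6\to\lambda_{3}$; for $G_{\varepsilon}^{(l)}(\varepsilon^{1/2}Y_{\varepsilon})-\mathbb{E}[\cdot]$ and $(\varepsilon^{1/2}Y_{\varepsilon})^{k}-\mathbb{E}[\cdot]$ it is built into the centering). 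The chaos kernel estimates therefore produce $\mathbb{E}\|\,\cdot\,\|_{C_{T}\CC^{-\kappa-\epsilon}(\rho^{\sigma})}^{p}\leq C_{p}\varepsilon^{p\kappa}$ for every $p<\infty$, and Markov's inequality with $p$ chosen large (of order $1/\epsilon$) turns this into the stated probability $1-C\varepsilon^{\epsilon/2}$. The hardest step I expect is the first one: matching, uniformly in $\varepsilon$, the non-Gaussian chaos decomposition of $\tilde F_{\varepsilon}$ with the Wick powers in the definition of $\mathbb{X}(\lambda)$, while keeping explicit control of how the polynomial weight $\rho$ interacts with the diagonal counterterms $d_{\varepsilon}^{\tau}$, since these are exactly the objects around which the weak-universality identification of $\lambda$ is orchestrated.
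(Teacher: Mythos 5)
Your overall strategy (Wiener chaos decomposition, hypercontractivity, second‑moment bounds on Littlewood--Paley blocks, weighted Besov embedding, Kolmogorov continuity) is exactly the machinery that the paper relies on: its own proof consists of citing the finite‑volume computations of [FG17] and the weighted extension of [GH18] for the convergence of the enhanced noise, and then stating first‑moment bounds for the three centered quantities. So for the first part of the theorem your proposal is a faithful (indeed more detailed) reconstruction of the intended argument.

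There is, however, a concrete flaw in your last step. You claim $\mathbb{E}\|\cdot\|_{C_T\CC^{-\kappa-\epsilon}(\rho^{\sigma})}^{p}\leq C_{p}\varepsilon^{p\kappa}$ and then apply Markov's inequality with $p\sim 1/\epsilon$. But with that moment bound Markov only gives
\[
\mathbb{P}\bigl[\|\cdot\|_{C_T\CC^{-\kappa-\epsilon}(\rho^{\sigma})}>K\varepsilon^{\kappa}\bigr]\leq C_{p}K^{-p},
\]
which is small in $K$ but \emph{independent of} $\varepsilon$; no choice of $p$ produces the stated exceptional probability $C\varepsilon^{\epsilon/2}$. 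The point of measuring these objects in $\CC^{-\kappa-\epsilon}$ rather than $\CC^{-\kappa}$ is precisely to harvest an extra power of $\varepsilon$ beyond $\varepsilon^{\kappa}$: since $Y_{\varepsilon}$ is smooth at scale $\varepsilon$, sacrificing $\kappa+\epsilon$ units of regularity gains $\varepsilon^{\kappa+\epsilon}$ (up to logarithmic losses), and the kernel estimates give $\mathbb{E}\|\cdot\|_{C_T\CC^{-\kappa-\epsilon}(\rho^{\sigma})}\lesssim\varepsilon^{\kappa+\epsilon/2}$ --- this is exactly what the paper records. A single application of Chebyshev's inequality at $p=1$ then yields $\mathbb{P}[\|\cdot\|>C\varepsilon^{\kappa}]\lesssim\varepsilon^{-\kappa}\cdot\varepsilon^{\kappa+\epsilon/2}=\varepsilon^{\epsilon/2}$. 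So you should replace your uniform-in-$p$ bound $C_{p}\varepsilon^{p\kappa}$ by the sharper $\varepsilon^{\kappa+\epsilon/2}$ (or $\varepsilon^{p(\kappa+\epsilon/2)}$ for general $p$, noting the constraint $\kappa\leq\tfrac12-\tfrac{\epsilon}{2}$ which keeps $\kappa+\epsilon$ below the full regularity $\tfrac12$ available from one factor of $\varepsilon^{1/2}Y_{\varepsilon}$); otherwise the tail estimates as stated do not follow.
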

\begin{proof}
The convergence of the above renormalization terms  has been given in finite volume case in [FG17].  It can also be extended to infinite volume by similar arguments as in [GH18]. For the second part we have the following estimates by similar calculation as in [FG17]:
$$\mathbb{E}[\|Y_\varepsilon^\emptyset-\lambda_3\|_{C_T\CC^{-\kappa-\epsilon}(\rho^\sigma)}]\lesssim \varepsilon^{\kappa+\frac{\epsilon}{2}}.$$
$$\mathbb{P}[\|G^{(l)}(\varepsilon^{1/2}Y_\varepsilon)-\mathbb{E}[G^{(l)}(\varepsilon^{1/2}Y_\varepsilon)]\|_{C_T\CC^{-\kappa-\epsilon}(\rho^\sigma)}]\lesssim \varepsilon^{\kappa+\frac{\epsilon}{2}}.$$
$$\mathbb{P}[\|(\varepsilon^{1/2}Y_\varepsilon)^k-\mathbb{E}[(\varepsilon^{1/2}Y_\varepsilon)]\|_{C_T\CC^{-\kappa-\epsilon}(\rho^\sigma)}]\lesssim \varepsilon^{\kappa+\frac{\epsilon}{2}}.$$
Then by Chebyshev's inequality the second results follow.
\end{proof}
$\hfill\Box$

\vskip.10in

We  rewrite (3.1) as follows:
\begin{equation}\aligned
\LL u_\varepsilon =&\eta_\varepsilon-\ttwo{\bar{Y}_{\varepsilon}}-\tthree{Y_{\varepsilon}}-3\ttwo{Y_{\varepsilon}}v_\varepsilon-3\tone{Y_{\varepsilon}}v_\varepsilon^2-Y_\varepsilon^\emptyset v_\varepsilon^3\\&-\varepsilon^{-3/2}f_{0,\varepsilon}-\varepsilon^{-1}f_{1,\varepsilon}(Y_\varepsilon+v_\varepsilon)-\varepsilon^{-1/2}f_{2,\varepsilon}
(2Y_\varepsilon v_\varepsilon+v_\varepsilon^2)-R_\varepsilon(v_\varepsilon).
\endaligned\end{equation}
$u_\varepsilon=Y_\varepsilon+v_\varepsilon$,
with
$$ v_\varepsilon =  - \tttwo{\bar{Y}_\varepsilon}-\ttthree{Y_\varepsilon}  + \phi_\varepsilon + \psi_\varepsilon.$$
Then we have
\begin{equation}\aligned
&\LL \phi_\varepsilon + \LL\psi_\varepsilon \\=&-3\ttwo{Y_\varepsilon}(- \tttwo{\bar{Y}_\varepsilon}-\ttthree{Y_\varepsilon}  + \phi_\varepsilon + \psi_\varepsilon)-3\tone{Y_\varepsilon}(- \tttwo{\bar{Y}_\varepsilon}-\ttthree{Y_\varepsilon}  + \phi_\varepsilon + \psi_\varepsilon)^2-Y_\varepsilon^\emptyset (- \tttwo{\bar{Y}_\varepsilon}-\ttthree{Y_\varepsilon}  + \phi_\varepsilon + \psi_\varepsilon)^3\\&-\varepsilon^{-3/2}f_{0,\varepsilon}-\varepsilon^{-1}f_{1,\varepsilon}(Y_\varepsilon- \tttwo{\bar{Y}_\varepsilon}-\ttthree{Y_\varepsilon}  + \phi_\varepsilon + \psi_\varepsilon)-\varepsilon^{-1/2}f_{2,\varepsilon}
[2Y_\varepsilon(- \tttwo{\bar{Y}_\varepsilon}-\ttthree{Y_\varepsilon}  + \phi_\varepsilon + \psi_\varepsilon)\\&+(- \tttwo{\bar{Y}_\varepsilon}-\ttthree{Y_\varepsilon}  + \phi_\varepsilon + \psi_\varepsilon)^2]-R_\varepsilon(- \tttwo{\bar{Y}_\varepsilon}-\ttthree{Y_\varepsilon}  + \phi_\varepsilon + \psi_\varepsilon).
\endaligned\end{equation}

Suppose that $\phi_\varepsilon$ is paracontrolled by $\ttwo{Y}$, namely it holds
\begin{equation}\phi_\varepsilon=\vartheta_\varepsilon-3(- \tttwo{\bar{Y}_\varepsilon}-\ttthree{Y_\varepsilon}  + \phi_\varepsilon + \psi_\varepsilon)\Prec \tttwo{Y_\varepsilon}.\end{equation}

Now we have
\begin{equation}\label{eq2.4}
\begin{aligned}
0&= \LL\vartheta_\varepsilon+\LL\psi_\varepsilon -3\Big( (- \tttwo{\bar{Y}_\varepsilon}-\ttthree{Y_\varepsilon}  + \phi_\varepsilon + \psi_\varepsilon)\Prec\ttwo{Y_\varepsilon}- (- \tttwo{\bar{Y}_\varepsilon}-\ttthree{Y_\varepsilon}  + \phi_\varepsilon + \psi_\varepsilon)\prec\ttwo{Y_\varepsilon}\Big)\\
&\quad+3\ttwo{Y_\varepsilon}\preccurlyeq(- \tttwo{\bar{Y}_\varepsilon}-\ttthree{Y_\varepsilon}+\phi_\varepsilon+\psi_\varepsilon)-3[\LL,(- \tttwo{\bar{Y}_\varepsilon}-\ttthree{Y_\varepsilon}+\phi_\varepsilon+\psi_\varepsilon)\Prec]\tttwo{Y_\varepsilon}\\
&\quad+3\tone{Y_\varepsilon}(- \tttwo{\bar{Y}_\varepsilon}-\ttthree{Y_\varepsilon}  + \phi_\varepsilon + \psi_\varepsilon)^2
+Y_\varepsilon^\emptyset(- \tttwo{\bar{Y}_\varepsilon}-\ttthree{Y_\varepsilon}  + \phi_\varepsilon + \psi_\varepsilon)^3\\&\quad+\varepsilon^{-3/2}f_{0,\varepsilon}+\varepsilon^{-1}f_{1,\varepsilon}(Y_\varepsilon- \tttwo{\bar{Y}_\varepsilon}-\ttthree{Y_\varepsilon}  + \phi_\varepsilon + \psi_\varepsilon)+\varepsilon^{-1/2}f_{2,\varepsilon}
[2Y_\varepsilon(- \tttwo{\bar{Y}_\varepsilon}-\ttthree{Y_\varepsilon}  + \phi_\varepsilon + \psi_\varepsilon)\\&\quad+(- \tttwo{\bar{Y}_\varepsilon}-\ttthree{Y_\varepsilon}  + \phi_\varepsilon + \psi_\varepsilon)^2]+R_\varepsilon(- \tttwo{\bar{Y}_\varepsilon}-\ttthree{Y_\varepsilon}  + \phi_\varepsilon + \psi_\varepsilon).
\end{aligned}
\end{equation}

Compared to the corresponding terms in [GH18] we mainly need to handle the extra terms containing $Y_\varepsilon^\emptyset$ and $R_\varepsilon$.
For other  terms we have the following similar  decomposition as in [GH18] by using localization operator (see Lemma 2.2):
$$
3\ttwo{Y_\varepsilon}\succ(- \tttwo{\bar{Y}_\varepsilon}-\ttthree{Y_\varepsilon}  + \phi_\varepsilon + \psi_\varepsilon)=-\rmm{3\ttwo{Y_\varepsilon}\succ(\tttwo{\bar{Y}_\varepsilon}+\ttthree{Y_\varepsilon})}
+\rmm{3\VV_>\ttwo{Y_\varepsilon}\succ(\phi_\varepsilon+\psi_\varepsilon)}+\rmb{3\VV_{\leq}\ttwo{Y_\varepsilon}\succ(\phi_\varepsilon+\psi_\varepsilon)},
$$
$$\aligned
&3\ttwo{Y_\varepsilon}\preccurlyeq(- \tttwo{\bar{Y}_\varepsilon}-\ttthree{Y_\varepsilon}  + \phi_\varepsilon + \psi_\varepsilon)\\=&-\rmm{3\ttwo{Y_\varepsilon}\prec(\tttwo{\bar{Y}_\varepsilon}+\ttthree{Y_\varepsilon})}
+\rmm{3\VV_>\ttwo{Y_\varepsilon}\prec(\phi_\varepsilon+\psi_\varepsilon)}+\rmb{3\VV_{\leq}\ttwo{Y_\varepsilon}\prec(\phi_\varepsilon+\psi_\varepsilon)}
\\&+3\ttwo{Y_\varepsilon}\circ(-\tttwo{\bar{Y_\varepsilon}}-\ttthree{Y_\varepsilon}+\phi_\varepsilon)+\rmb{3\ttwo{Y_\varepsilon}\circ\psi_\varepsilon}.
\endaligned$$
$$\aligned&
3\ttwo{Y_\varepsilon}\circ(-\tttwo{\bar{Y_\varepsilon}}-\ttthree{Y_\varepsilon}+\phi_\varepsilon)
\\=&\rmm{-3\ttthreetwo{Y_\varepsilon}-3\tttwotwo{\bar{Y}_\varepsilon}}-3\tttwotwo{\bar{d}_\varepsilon}
-3\ttthreetwo{d_\varepsilon}-3\ttthreetwo{d'_\varepsilon}Y_\varepsilon+\rmb{3\ttwo{Y_\varepsilon}\circ\vartheta_\varepsilon}
\\&-\rmb{[9\ttwo{Y_\varepsilon}\circ((- \tttwo{\bar{Y}_\varepsilon}-\ttthree{Y_\varepsilon}  + \phi_\varepsilon + \psi_\varepsilon)\Prec \tttwo{Y_\varepsilon})-9\ttwo{Y_\varepsilon}\circ((- \tttwo{\bar{Y}_\varepsilon}-\ttthree{Y_\varepsilon}  + \phi_\varepsilon + \psi_\varepsilon)\prec \tttwo{Y_\varepsilon})]}
\\&\rmb{-9\mathrm{com}(- \tttwo{\bar{Y}_\varepsilon}-\ttthree{Y_\varepsilon}  + \phi_\varepsilon + \psi_\varepsilon,\tttwo{Y_\varepsilon},\ttwo{Y_\varepsilon})}\\&-\rmg{9(-\tttwo{\bar{Y}_\varepsilon}-\ttthree{Y_\varepsilon}+\phi_\varepsilon
+\psi_\varepsilon)\tttwotwo{{Y}_\varepsilon}}-9\tttwotwo{d_\varepsilon}v_\varepsilon.\endaligned$$

$$\aligned &3\tone{Y_\varepsilon}(- \tttwo{\bar{Y}_\varepsilon}-\ttthree{Y_\varepsilon}  + \phi_\varepsilon + \psi_\varepsilon)^2
\\=&\rmm{3\tone{Y_\varepsilon}\succ(\ttthree{Y_\varepsilon})^2}+\rmm{3\tone{Y_\varepsilon}\prec(\ttthree{Y_\varepsilon})^2}+\rmm{6\ttthree{Y_\varepsilon}
\ttthreeone{Y_\varepsilon}}+\rmb{6\mathrm{com}(\ttthree{Y_\varepsilon},\ttthree{Y_\varepsilon},\tone{Y_\varepsilon})}
+\rmm{3\tone{Y_\varepsilon}\circ R_1(\ttthree{Y_\varepsilon})}
\\&-\rmg{6\tone{Y_\varepsilon}\succ(\ttthree{Y_\varepsilon}(-\tttwo{\bar{Y_\varepsilon}}+\phi_\varepsilon+\psi_\varepsilon))}-
\rmg{6\tone{Y_\varepsilon}\prec(\ttthree{Y_\varepsilon}(-\tttwo{\bar{Y_\varepsilon}}+\phi_\varepsilon+\psi_\varepsilon))}
\\&-\rmb{6\tone{Y_\varepsilon}\circ(\ttthree{Y_\varepsilon}
\preccurlyeq(-\tttwo{\bar{Y_\varepsilon}}+\phi_\varepsilon+\psi_\varepsilon))}-\rmg{6(-\tttwo{\bar{Y_\varepsilon}}+\phi_\varepsilon+\psi_\varepsilon)
\ttthreeone{Y_\varepsilon}}
\\
&-\rmb{6\mathrm{com}(-\tttwo{\bar{Y_\varepsilon}}+\phi_\varepsilon+\psi_\varepsilon,\ttthree{Y_\varepsilon},\tone{Y_\varepsilon})}
+\rmg{3\tone{Y_\varepsilon}(-\tttwo{\bar{Y_\varepsilon}}
+\phi_\varepsilon+\psi_\varepsilon)^2}-6\ttthreeone{d_\varepsilon}v_\varepsilon,\endaligned$$
where $R_1(\ttthree{Y_\varepsilon})=\ttthree{Y_\varepsilon}^2-2\ttthree{Y_\varepsilon}\prec\ttthree{Y_\varepsilon}$.
For the above brown terms we can decompose it similarly as in [GH18] by using localization operator $\VV_>$ and $\VV_\leq$ with $\VV_>+\VV_\leq=1$:
$$\aligned\rmg{-9(-\tttwo{\bar{Y}_\varepsilon}-\ttthree{Y_\varepsilon}+\phi_\varepsilon
+\psi_\varepsilon)\tttwotwo{{Y}_\varepsilon}}=&\rmm{9(\tttwo{\bar{Y}_\varepsilon}+\ttthree{Y_\varepsilon})\tttwotwo{{Y}_\varepsilon}}
-\rmm{9(\phi_\varepsilon+\psi_\varepsilon)\prec\VV_>\tttwotwo{{Y}_\varepsilon}}
\\&-\rmb{9(\phi_\varepsilon+\psi_\varepsilon)\prec\VV_\leq\tttwotwo{{Y}_\varepsilon}}-\rmb{9(\phi_\varepsilon+\psi_\varepsilon)
\succcurlyeq\tttwotwo{{Y}_\varepsilon}}.\endaligned$$
$$\aligned\rmg{6\tone{Y_\varepsilon}\succ(\ttthree{Y_\varepsilon}(-\tttwo{\bar{Y_\varepsilon}}+\phi_\varepsilon+\psi_\varepsilon))}
=&-\rmm{6\tone{Y_\varepsilon}\succ(\ttthree{Y_\varepsilon}\tttwo{\bar{Y_\varepsilon}})}+\rmm{6\VV_>\tone{Y_\varepsilon}\succ(\ttthree{Y_\varepsilon}(\phi_\varepsilon+\psi_\varepsilon))}
\\&+\rmb{6\VV_\leq\tone{Y_\varepsilon}\succ(\ttthree{Y_\varepsilon}(\phi_\varepsilon+\psi_\varepsilon))}.\endaligned$$
$$\aligned\rmg{6\tone{Y_\varepsilon}\prec(\ttthree{Y_\varepsilon}(-\tttwo{\bar{Y_\varepsilon}}+\phi_\varepsilon+\psi_\varepsilon))}
=&-\rmm{6\tone{Y_\varepsilon}\prec(\ttthree{Y_\varepsilon}\tttwo{\bar{Y_\varepsilon}})}+\rmm{6\VV_>\tone{Y_\varepsilon}
\prec(\ttthree{Y_\varepsilon}(\phi_\varepsilon+\psi_\varepsilon))}
\\&+\rmb{6\VV_\leq\tone{Y_\varepsilon}\prec(\ttthree{Y_\varepsilon}(\phi_\varepsilon+\psi_\varepsilon))}.\endaligned$$
$$\aligned\rmg{6(-\tttwo{\bar{Y_\varepsilon}}+\phi_\varepsilon+\psi_\varepsilon)
\ttthreeone{Y_\varepsilon}}=&\rmm{-6\tttwo{\bar{Y_\varepsilon}}
\ttthreeone{Y_\varepsilon}}+\rmm{6(\phi_\varepsilon+\psi_\varepsilon)
\prec\VV_>\ttthreeone{Y_\varepsilon}}\\&+\rmb{6(\phi_\varepsilon+\psi_\varepsilon)
\prec\VV_\leq\ttthreeone{Y_\varepsilon}}+\rmb{6(\phi_\varepsilon+\psi_\varepsilon)
\succcurlyeq\ttthreeone{Y_\varepsilon}}\endaligned$$
$$\aligned\rmg{3\tone{Y_\varepsilon}(-\tttwo{\bar{Y_\varepsilon}}
+\phi_\varepsilon+\psi_\varepsilon)^2}=&\rmm{3\VV_>\tone{Y_\varepsilon}\succ(-\tttwo{\bar{Y_\varepsilon}}
+\phi_\varepsilon+\psi_\varepsilon)^2}+\rmb{3\VV_\leq\tone{Y_\varepsilon}\succ(-\tttwo{\bar{Y_\varepsilon}}
+\phi_\varepsilon+\psi_\varepsilon)^2}\\&+\rmb{3\tone{Y_\varepsilon}\preccurlyeq(-\tttwo{\bar{Y_\varepsilon}}
+\phi_\varepsilon+\psi_\varepsilon)^2}.\endaligned$$
For the term containing $Y^\emptyset_\varepsilon$ we decompose
$$\aligned &Y^\emptyset_\varepsilon(- \tttwo{\bar{Y}_\varepsilon}-\ttthree{Y_\varepsilon}  + \phi_\varepsilon + \psi_\varepsilon)^3\\=&(Y_\varepsilon^\emptyset-\lambda_3)(- \tttwo{\bar{Y}_\varepsilon}-\ttthree{Y_\varepsilon}  + \phi_\varepsilon + \psi_\varepsilon)^3+\lambda_3(- \tttwo{\bar{Y}_\varepsilon}-\ttthree{Y_\varepsilon}  + \phi_\varepsilon + \psi_\varepsilon)^3
\\=&\rmm{\VV_>(Y^\emptyset_\varepsilon-\lambda_3)\succ(- \tttwo{\bar{Y}_\varepsilon}-\ttthree{Y_\varepsilon}  + \phi_\varepsilon + \psi_\varepsilon)^3}+\rmb{(Y_\varepsilon^\emptyset-\lambda_3)\preccurlyeq(- \tttwo{\bar{Y}_\varepsilon}-\ttthree{Y_\varepsilon}  + \phi_\varepsilon + \psi_\varepsilon)^3}\\&+\rmb{\VV_\leq(Y^\emptyset_\varepsilon-\lambda_3)\succ(- \tttwo{\bar{Y}_\varepsilon}-\ttthree{Y_\varepsilon}  + \phi_\varepsilon + \psi_\varepsilon)^3}
\\&+\rmb{\lambda_3(- \tttwo{\bar{Y}_\varepsilon}-\ttthree{Y_\varepsilon}  + \phi_\varepsilon)^3}+\rmb{3\lambda_3(- \tttwo{\bar{Y}_\varepsilon}-\ttthree{Y_\varepsilon}  + \phi_\varepsilon)^2\psi_\varepsilon}+\rmb{3\lambda_3(- \tttwo{\bar{Y}_\varepsilon}-\ttthree{Y_\varepsilon}  + \phi_\varepsilon)\psi_\varepsilon^2}+\lambda_3\psi_\varepsilon^3.\endaligned$$
Combining the above terms containing $d_\varepsilon$ and  the other terms without $R_\varepsilon$ in (3.9),  we have the following decomposition
\begin{equation}
\begin{aligned}
&\rmm{-\lambda_{1,\varepsilon}Y_\varepsilon}-\rmb{\lambda_{0,\varepsilon}}-\rmb{\lambda_{1,\varepsilon}(- \tttwo{\bar{Y}_\varepsilon}-\ttthree{Y_\varepsilon}  + \phi_\varepsilon + \psi_\varepsilon)}+\rmm{2\lambda_{2,\varepsilon}(\ttthreeone{Y_\varepsilon}+\ttthree{Y_\varepsilon}\prec \tone{Y_\varepsilon}+\ttthree{Y_\varepsilon}\succ \tone{Y_\varepsilon})}\\&+\rmm{2\lambda_{2,\varepsilon}
Y_\varepsilon \tttwo{\bar{Y}_\varepsilon}}-\rmm{2\lambda_{2,\varepsilon} \VV_>Y_\varepsilon\succ(\phi_\varepsilon + \psi_\varepsilon)}-\rmb{2\lambda_{2,\varepsilon} \VV_\leq Y_\varepsilon\succ(\phi_\varepsilon + \psi_\varepsilon)}-\rmb{2\lambda_{2,\varepsilon} Y_\varepsilon\preccurlyeq(\phi_\varepsilon+\psi_\varepsilon)}\\&-\rmb{\lambda_{2,\varepsilon}(- \tttwo{\bar{Y}_\varepsilon}-\ttthree{Y_\varepsilon}  + \phi_\varepsilon+ \psi_\varepsilon)^2}.
\end{aligned}
\end{equation}
Now we come to $R_\varepsilon$. By assumption we have that
$$F_\varepsilon^{(3)}(x)=C_{0,\varepsilon}(\Pi_{i=0}^2(m-i))x^{m-3}+G_\varepsilon^{(3)}(x).$$
By this we separate $R_\varepsilon$ as $R_\varepsilon^1+R_\varepsilon^2$.
For $R_\varepsilon^1(- \tttwo{\bar{Y}_\varepsilon}-\ttthree{Y_\varepsilon}  + \phi_\varepsilon + \psi_\varepsilon)$ we have
$$\aligned &R_\varepsilon^1(- \tttwo{\bar{Y}_\varepsilon}-\ttthree{Y_\varepsilon}  + \phi_\varepsilon + \psi_\varepsilon)
\\=&a_{0,\varepsilon}\varepsilon^{\frac{1}{2}} (- \tttwo{\bar{Y}_\varepsilon}-\ttthree{Y_\varepsilon}  + \phi_\varepsilon + \psi_\varepsilon)^4\int_0^1(\varepsilon^{\frac{1}{2}}Y_\varepsilon+\tau\varepsilon^{\frac{1}{2}} (- \tttwo{\bar{Y}_\varepsilon}-\ttthree{Y_\varepsilon}  + \phi_\varepsilon + \psi_\varepsilon))^{m-4}\frac{(1-\tau)^3}{3!}d\tau
\\=&\sum_{l=4}^{m-1}\varepsilon^{\frac{l-3}{2}}b_{l,\varepsilon}(\varepsilon^{\frac{1}{2}}Y_\varepsilon)^{m-l} (- \tttwo{\bar{Y}_\varepsilon}-\ttthree{Y_\varepsilon}  + \phi_\varepsilon + \psi_\varepsilon)^{l}+C_{0,\varepsilon}\varepsilon^{\frac{m-3}{2}}(- \tttwo{\bar{Y}_\varepsilon}-\ttthree{Y_\varepsilon}  + \phi_\varepsilon + \psi_\varepsilon)^{m}
,\endaligned$$
with $a_{0,\varepsilon}=C_{0,\varepsilon}(\Pi_{i=0}^3(m-i))$,  $b_{l,\varepsilon}=C_{0,\varepsilon}\frac{m!}{(m-l)!l!}$.
Furthermore,  we have the following decomposition for the above two terms:
$$\aligned &C_{0,\varepsilon}\varepsilon^{\frac{m-3}{2}}(- \tttwo{\bar{Y}_\varepsilon}-\ttthree{Y_\varepsilon}  + \phi_\varepsilon + \psi_\varepsilon)^{m}=
\rmb{C_{0,\varepsilon}\sum_{k=1}^m \varepsilon^{\frac{m-3}{2}}C_m^k(- \tttwo{\bar{Y}_\varepsilon}-\ttthree{Y_\varepsilon}  + \phi_\varepsilon)^k\psi_\varepsilon^{m-k}} +C_{0,\varepsilon}\varepsilon^{\frac{m-3}{2}}\psi_\varepsilon^m.\endaligned$$

\begin{equation}\aligned &\sum_{l=4}^{m-1}\varepsilon^{\frac{l-3}{2}}b_{l,\varepsilon} (\varepsilon^{\frac{1}{2}}Y_\varepsilon)^{m-l} (- \tttwo{\bar{Y}_\varepsilon}-\ttthree{Y_\varepsilon}  + \phi_\varepsilon + \psi_\varepsilon)^{l}\\=&\sum_{l=4,l \textrm{ odd}}^{m-1}\varepsilon^{\frac{l-3}{2}}b_{l,\varepsilon} \mathbb{E}[(\varepsilon^{\frac{1}{2}}Y_\varepsilon)^{m-l}] (- \tttwo{\bar{Y}_\varepsilon}-\ttthree{Y_\varepsilon}  + \phi_\varepsilon + \psi_\varepsilon)^{l}\\&+\sum_{l=4}^{m-1} b_{l,\varepsilon}
\bigg[\rmb{Y^{\emptyset,m-l}_{1,\varepsilon}\preccurlyeq\varepsilon^{\frac{l-3}{2}} (- \tttwo{\bar{Y}_\varepsilon}-\ttthree{Y_\varepsilon}  + \phi_\varepsilon + \psi_\varepsilon)^{l}}\\&+\rmb{\VV_\leq Y^{\emptyset,m-l}_{1,\varepsilon}\succ\varepsilon^{\frac{l-3}{2}} (- \tttwo{\bar{Y}_\varepsilon}-\ttthree{Y_\varepsilon}  + \phi_\varepsilon + \psi_\varepsilon)^{l}}\\&+\rmm{\VV_>Y^{\emptyset,m-l}_{1,\varepsilon}\succ\varepsilon^{\frac{l-3}{2}} (- \tttwo{\bar{Y}_\varepsilon}-\ttthree{Y_\varepsilon}  + \phi_\varepsilon + \psi_\varepsilon)^{l}}\bigg],\endaligned\end{equation}
with $Y^{\emptyset,m-l}_{1,\varepsilon}= (\varepsilon^{\frac{1}{2}}Y_\varepsilon)^{m-l}-\mathbb{E}[(\varepsilon^{\frac{1}{2}}Y_\varepsilon)^{m-l}]$. Here we used $\mathbb{E}[(\varepsilon^{\frac{1}{2}}Y_\varepsilon)^{m-l}]=0$ for $l$ even.

\begin{equation}\aligned &R_\varepsilon^2(- \tttwo{\bar{Y}_\varepsilon}-\ttthree{Y_\varepsilon}  + \phi_\varepsilon + \psi_\varepsilon)
\\=&(- \tttwo{\bar{Y}_\varepsilon}-\ttthree{Y_\varepsilon}  + \phi_\varepsilon + \psi_\varepsilon)^3\int_0^1[G_\varepsilon^{(3)}(\varepsilon^{\frac{1}{2}}Y_\varepsilon+\tau\varepsilon^{\frac{1}{2}} (- \tttwo{\bar{Y}_\varepsilon}-\ttthree{Y_\varepsilon}  + \phi_\varepsilon + \psi_\varepsilon))-G_\varepsilon^{(3)}(\varepsilon^{\frac{1}{2}}Y_\varepsilon)]\frac{(1-\tau)^2}{2!}d\tau
\\=&(- \tttwo{\bar{Y}_\varepsilon}-\ttthree{Y_\varepsilon}  + \phi_\varepsilon + \psi_\varepsilon)^3\int_0^1\sum_{k=1}^{m_1-4}\bigg[G_\varepsilon^{(k+3)}(\varepsilon^{\frac{1}{2}}Y_\varepsilon)\frac{\tau^{k}}{k!}\varepsilon^{\frac{k}{2}} (- \tttwo{\bar{Y}_\varepsilon}-\ttthree{Y_\varepsilon}  + \phi_\varepsilon + \psi_\varepsilon)^{k}\\&+G_\varepsilon^{(m_1)}(\varepsilon^{\frac{1}{2}}Y_\varepsilon+\theta_\tau\varepsilon^{\frac{1}{2}}v_\varepsilon)
\frac{\tau^{m_1-3}}{(m_1-3)!}\varepsilon^{\frac{m_1-3}{2}} (- \tttwo{\bar{Y}_\varepsilon}-\ttthree{Y_\varepsilon}  + \phi_\varepsilon + \psi_\varepsilon)^{m_1-3}\bigg]\frac{(1-\tau)^2}{2!}d\tau
\\=&\sum_{l=4}^{m_1-1}{\varepsilon^{\frac{l-3}{2}}c_{l,\varepsilon} (- \tttwo{\bar{Y}_\varepsilon}-\ttthree{Y_\varepsilon}  + \phi_\varepsilon + \psi_\varepsilon)^{l}}+\sum_{l=4}^{m_1-1}\varepsilon^{\frac{l-3}{2}}Y^{\emptyset,l}_{2,\varepsilon} (- \tttwo{\bar{Y}_\varepsilon}-\ttthree{Y_\varepsilon}  + \phi_\varepsilon + \psi_\varepsilon)^{l}\\&+\rmb{\int_0^1G_\varepsilon^{(m_1)}(\varepsilon^{\frac{1}{2}}Y_\varepsilon+\theta_\tau\varepsilon^{\frac{1}{2}}v_\varepsilon)
\varepsilon^{\frac{m_1-3}{2}} (- \tttwo{\bar{Y}_\varepsilon}-\ttthree{Y_\varepsilon}  + \phi_\varepsilon + \psi_\varepsilon)^{m_1}\frac{\tau^{m_1-3}}{(m_1-3)!}\frac{(1-\tau)^2}{2!}d\tau},\endaligned\end{equation}
with $\theta_\tau\in(0,1)$ and $Y^{\emptyset,l}_{2,\varepsilon}=\frac{1}{l!}\big[G_\varepsilon^{(l)}(\varepsilon^{\frac{1}{2}}Y_\varepsilon)-\mathbb{E}[G_\varepsilon^{(l)}
(\varepsilon^{\frac{1}{2}}Y_\varepsilon)]\big]$ and $c_{l,\varepsilon}=\frac{1}{l!}\mathbb{E}[G_\varepsilon^{(l)}(\varepsilon^{\frac{1}{2}}Y_\varepsilon)]$.
Combining the first term on the right hand of (3.11) and (3.12) we have
$$\aligned&\sum_{l=4,l \textrm{ even}}^{m_1-1}\rmb{\varepsilon^{\frac{l-3}{2}}c_{l,\varepsilon} (- \tttwo{\bar{Y}_\varepsilon}-\ttthree{Y_\varepsilon}  + \phi_\varepsilon + \psi_\varepsilon)^{l}}+\sum_{l=4,l \textrm{ odd}}^{m-1}\varepsilon^{\frac{l-3}{2}} (a_{l,\varepsilon}\vee0)(- \tttwo{\bar{Y}_\varepsilon}-\ttthree{Y_\varepsilon}  + \phi_\varepsilon + \psi_\varepsilon)^{l}\\&+\sum_{l=4,l \textrm{ odd}}^{m-1}\rmb{\varepsilon^{\frac{l-3}{2}} (a_{l,\varepsilon}\wedge0)(- \tttwo{\bar{Y}_\varepsilon}-\ttthree{Y_\varepsilon}  + \phi_\varepsilon + \psi_\varepsilon)^{l}},\endaligned$$
with $a_{l,\varepsilon}=b_{l,\varepsilon} \mathbb{E}[(\varepsilon^{\frac{1}{2}}Y_\varepsilon)^{m-l}]+c_{l,\varepsilon}$ and $c_{l,\varepsilon}=0$ for $l\geq m_1$. For $l=4,...,m-1,$ odd we have
$$\varepsilon^{\frac{l-3}{2}} (a_{l,\varepsilon}\vee0)(- \tttwo{\bar{Y}_\varepsilon}-\ttthree{Y_\varepsilon}  + \phi_\varepsilon + \psi_\varepsilon)^{l}=\sum_{k=0}^{l-1}\rmb{\varepsilon^{\frac{l-3}{2}}C_l^k (a_{l,\varepsilon}\vee0)(- \tttwo{\bar{Y}_\varepsilon}-\ttthree{Y_\varepsilon}  + \phi_\varepsilon)^{l-k}\psi^k_\varepsilon}+\varepsilon^{\frac{l-3}{2}} (a_{l,\varepsilon}\vee0)\psi_\varepsilon^l.$$

For the second term on the right hand of (3.12) we have
$$\aligned &\varepsilon^{\frac{l-3}{2}}Y^{\emptyset,l}_{2,\varepsilon} (- \tttwo{\bar{Y}_\varepsilon}-\ttthree{Y_\varepsilon}  + \phi_\varepsilon + \psi_\varepsilon)^{l}\\=&
\rmb{Y^{\emptyset,l}_{2,\varepsilon}\preccurlyeq\varepsilon^{\frac{l-3}{2}} (- \tttwo{\bar{Y}_\varepsilon}-\ttthree{Y_\varepsilon}  + \phi_\varepsilon + \psi_\varepsilon)^{l}}+\rmb{\VV_\leq Y^{\emptyset,l}_{2,\varepsilon}\succ\varepsilon^{\frac{l-3}{2}} (- \tttwo{\bar{Y}_\varepsilon}-\ttthree{Y_\varepsilon}  + \phi_\varepsilon + \psi_\varepsilon)^{l}}\\&+\rmm{\VV_>Y^{\emptyset,l}_{2,\varepsilon}\succ\varepsilon^{\frac{l-3}{2}} (- \tttwo{\bar{Y}_\varepsilon}-\ttthree{Y_\varepsilon}  + \phi_\varepsilon + \psi_\varepsilon)^{l}}.\endaligned$$

Now, let $\rmm{\Phi}$ be the sum of all the  magenta terms above and $\rmb{\Psi}$ the sum of all the blue terms. We require that, separately,
\begin{align}\label{eq:two}
 \LL \phi_\varepsilon + \rmm{\Phi_\varepsilon} = 0, \qquad \LL \psi_\varepsilon + \lambda_3\psi_\varepsilon^3 +C_{0,\varepsilon}\varepsilon^{\frac{m-3}{2}}\psi^m_\varepsilon+ \sum_{l=4,l \textrm{ odd}}^{m-1}\varepsilon^{\frac{l-3}{2}} (a_{l,\varepsilon}\vee0)\psi^l_\varepsilon+ \rmb{\Psi_\varepsilon} = 0.
 \end{align}

\section{Uniform estimates}

In the following we fix several small enough positive parameters
$$0<\alpha<\frac{3}{10m},\quad 0<\epsilon<\gamma<\kappa<\alpha,\quad \gamma=\alpha-\kappa,\quad \gamma+\epsilon<4\alpha^2,\quad \gamma_1>4\alpha.$$
$\sigma>0$ is also a small constant, which may change in different estimates below. $0<\delta, \delta_0<1$ are also constants, which may change from line to line.
Set
$$\Omega_\varepsilon^1:=\{\|Y_\varepsilon^\emptyset-\lambda_3\|_{C_T\CC^{-\frac{1}{2}+\frac{3}{2m}-\frac{3\epsilon}{4}}(\rho^\sigma)}\lesssim \varepsilon^{\frac{1}{2}-\frac{3}{2m}+\frac{\epsilon}{2}}, 
\|Y_\varepsilon^\emptyset-\lambda_3\|_{C\CC^{-\frac{1}{2}+\frac{\epsilon}{2}}(\rho^{\sigma})}\lesssim \varepsilon^{\frac{1}{2}-\frac{3\epsilon}{4}}\}.$$
$$\Omega_\varepsilon^2:=\{\|Y_{1,\varepsilon}^{\emptyset,m-l}\|_{C_T\CC^{-\kappa}(\rho^\sigma)}\lesssim \varepsilon^{\frac{\kappa}{2}},\|Y_{1,\varepsilon}^{\emptyset,m-l}\|_{C_T\CC^{-2\alpha-\epsilon}(\rho^\sigma)}\lesssim \varepsilon^{2\alpha+\frac{\epsilon}{2}},  l=4,...,m-1\}.$$
$$\Omega_\varepsilon^3:=\{\|Y_{2,\varepsilon}^{\emptyset,l}\|_{C_T\CC^{-\kappa}(\rho^\sigma)}\lesssim \varepsilon^{\frac{\kappa}{2}}, \|Y_{2,\varepsilon}^{\emptyset,l}\|_{C_T\CC^{-2\alpha-\epsilon}(\rho^\sigma)}\lesssim \varepsilon^{2\alpha+\frac{\epsilon}{2}} , l=4,...,m_1-1\}.$$
$$\aligned\Omega_{\varepsilon,M_0}^4:=&\{\|\tau_{\varepsilon}\|_{C_T\CC^{\alpha_{\tau}}(\rho^{\sigma})}\leq M_0, \| \tttwo{\bar{Y}_\varepsilon}\|_{C_T^{\alpha+\kappa}L^\infty(\rho^{\sigma})}+\|\ttthree{Y_\varepsilon} \|_{C_T^{\alpha+\kappa}L^\infty(\rho^{\sigma})}\leq M_0, \\&\tau_{\varepsilon }\rightarrow \tau \textrm{ in } C_T\CC^{\alpha_{\tau}}(\rho^{\sigma}), \quad
\tttwo{\bar{Y}_\varepsilon}\rightarrow \tttwo{X},\quad \ttthree{Y_\varepsilon}\rightarrow\ttthree{X} \textrm{ in }C_T^{\alpha+\kappa}L^\infty(\rho^{\sigma}), \\&\textrm{ where }\tau_{\varepsilon} \textrm{ is an component in }\mathbb{Y}_\varepsilon,   \tau \textrm{ the corresponding component in } \mathbb{X}\},\endaligned$$
where $M_0>0$.
In the following  we do estimate on $ \Omega_{\varepsilon,M_0}:=\cap_{i=1}^3\Omega_\varepsilon^i\cap\Omega_{\varepsilon,M_0}^4 $.

For fixed $T>0$ and $M$ large enough, define
$$T_{\varepsilon,M}:=\inf\{t>0, \varepsilon^{\frac{m-3}{2}}(\|\phi_\varepsilon\|^m_{C_t L^\infty(\rho^{\frac{3+6\alpha}{2m}})}+\|\psi_\varepsilon\|^m_{C_t L^\infty(\rho^{\frac{3+6\alpha}{2m}})})>M\}\wedge T.$$
We also choose $\varepsilon$ small enough such that $\varepsilon^{\frac{\epsilon}{2}}M\leq 1$. In the following we first do the estimates before $T_{\varepsilon,M}$. Since all the constant in the estimates below are independent of $\varepsilon$ and $M$, we will omit $T_{\varepsilon,M}$ in the estimate below for simplicity. In this case we choose $(L_k)$ in the construction of $\VV_>, \VV_\leq$ similar as in [GH18] and choose the weight in a more delicate way. For the terms containing $Y^{\emptyset}$, the order of $\phi$ and $\psi$ is higher than $3$. In this case we use $\varepsilon^{\frac{m-3}{2}}\|\psi\|^m_{L^\infty L^\infty(\rho^{\frac{3+6\alpha}{2}})}$ to control it.

 \subsection{Bound for $\phi$ in  $C\CC^\alpha(\rho^{\frac{3+6\alpha}{2m}})$}
 For the terms similar as in [GH18], we obtain similar regularity estimates but we choose the weight in a different way. We put this part in Appendix A. In the following we mainly concentrate on the terms containing $Y_\varepsilon^\emptyset$ and on the terms deduced from $R_\varepsilon$.

We use Lemma 2.4 and have the following estimates
$$\aligned\|\rmm{2\lambda_{2,\varepsilon} \VV_>Y_\varepsilon\succ(\phi_\varepsilon + \psi_\varepsilon)}\|_{C\CC^{-2+\alpha}(\rho^{\frac{3+6\alpha}{2m}})}\lesssim&\|{\VV_>Y_\varepsilon\|_{C\CC^{-2+\alpha}(\rho^{\frac{3+6\alpha}{2m}-\frac{1}{2}-\alpha})}\|\phi_\varepsilon + \psi_\varepsilon}\|_{L^\infty L^\infty(\rho^{\frac{1}{2}+\alpha})}\\\lesssim&2^{-(\frac{3}{2}-\alpha-\kappa)\frac{2L}{3}}\| \phi_\varepsilon + \psi_\varepsilon\|_{L^\infty L^\infty(\rho^{\frac{1}{2}+\alpha})},\endaligned$$

$$\aligned&\|\rmm{\VV_>(Y^\emptyset_\varepsilon-\lambda_3)\succ(- \tttwo{\bar{Y}_\varepsilon}-\ttthree{Y_\varepsilon}  + \phi_\varepsilon + \psi_\varepsilon)^3}\|_{C\CC^{-2+\alpha}(\rho^{\frac{3+6\alpha}{2m}})}
\\\lesssim&\|\VV_>(Y^\emptyset_\varepsilon-\lambda_3)\|_{C\CC^{-2+\alpha}(\rho^{-\frac{3}{2}-3\alpha+\frac{3+6\alpha}{2m}})}(1+\| \phi_\varepsilon + \psi_\varepsilon\|_{L^\infty L^\infty(\rho^{\frac{1}{2}+\alpha})}^3)
\\\lesssim&2^{-(2-\alpha-\kappa)\frac{3L}{2}}\|Y^\emptyset_\varepsilon-\lambda_3\|_{C\CC^{-\kappa}(\rho^{\sigma})}(1+\| \phi_\varepsilon + \psi_\varepsilon\|_{L^\infty L^\infty(\rho^{\frac{1}{2}+\alpha})}^3),\endaligned$$

For $l=4,...,m-1$, we have on $\Omega_{\varepsilon,M_0}$

$$\aligned&\|\rmm{\VV_>(Y^{\emptyset,m-l}_{1,\varepsilon})\succ\varepsilon^{\frac{l-3}{2}} (- \tttwo{\bar{Y}_\varepsilon}-\ttthree{Y_\varepsilon}  + \phi_\varepsilon + \psi_\varepsilon)^{l}}\|_{C\CC^{-2+\alpha}(\rho^{\frac{3+6\alpha}{2m}})}
\\\lesssim&\|\VV_>(Y^{\emptyset,m-l}_{1,\varepsilon})\|_{C\CC^{-2+\alpha}(\rho^{-\frac{3}{2}-3\alpha+\frac{3+6\alpha}{2m}})}
\varepsilon^{\frac{l-3}{2}}\| (- \tttwo{\bar{Y}_\varepsilon}-\ttthree{Y_\varepsilon}+\phi_\varepsilon + \psi_\varepsilon)^{l}\|_{L^\infty L^\infty(\rho^{\frac{3+6\alpha}{2}})}\\\lesssim&\|\VV_>(Y^{\emptyset,m-l}_{1,\varepsilon})\|_{C\CC^{-2+\alpha}(\rho^{-\frac{3}{2}-3\alpha+\frac{3+6\alpha}{2m}})}
[\varepsilon^{\frac{m-3}{2}}\| (- \tttwo{\bar{Y}_\varepsilon}-\ttthree{Y_\varepsilon}+\phi_\varepsilon + \psi_\varepsilon)^{m}\|_{L^\infty L^\infty(\rho^{\frac{3+6\alpha}{2}})}]^{\frac{l-3}{m-3}}\\&(1+\|\phi_\varepsilon+\psi_\varepsilon\|_{L^\infty L^\infty(\rho^{\frac{1}{2}+\alpha})}^{\frac{3(m-l)}{m-3}})\\\lesssim&2^{-(2-\alpha-\kappa)\frac{3}{2} L}(1+\|\phi_\varepsilon+\psi_\varepsilon\|_{L^\infty L^\infty(\rho^{\frac{1}{2}+\alpha})}^{\frac{3(m-l)}{m-3}}),\endaligned$$
where
the constant we omit is independent of $M$ and we use Lemma 2.2 to have $$\|\VV_>(Y^{\emptyset,m-l}_{1,\varepsilon})\|_{C\CC^{-2+\alpha}(\rho^{-\frac{3}{2}-3\alpha+\frac{3+6\alpha}{2m}})}\lesssim2^{-(2-\alpha-\kappa)\frac{3}{2} L}
\|Y^{\emptyset,m-l}_{1,\varepsilon}\|_{C\CC^{-\kappa}(\rho^{\sigma})},$$
  $\|Y^{\emptyset,m-l}_{1,\varepsilon}\|_{C\CC^{-\kappa}(\rho^\sigma)}\lesssim \varepsilon^{\frac{\kappa}{2}}$ and $\varepsilon^\frac{\kappa}{2}M\leq 1$ in the last step.

Similarly, for $l=4,...,m_1-4$ we also have on $\Omega_{\varepsilon,M_0}$

$$\aligned&\|\rmm{\VV_>Y^{\emptyset,l}_{2,\varepsilon}\succ\varepsilon^{\frac{l-3}{2}} (- \tttwo{\bar{Y}_\varepsilon}-\ttthree{Y_\varepsilon}  + \phi_\varepsilon + \psi_\varepsilon)^{l}}\|_{C\CC^{-2+\alpha}(\rho^{\frac{3+6\alpha}{2m}})}
\\\lesssim&2^{-(2-\alpha-\kappa)\frac{3}{2} L}(1+\|\phi_\varepsilon+\psi_\varepsilon\|_{L^\infty L^\infty(\rho^{\frac{1}{2}+\alpha})}^{\frac{3(m-l)}{m-3}}).\endaligned$$

Combining all the above estimates and the extra estimates in Appendix A.1 we have

$$\aligned&\|\phi_\varepsilon\|_{L^\infty L^\infty(\rho^{\frac{3+6\alpha}{2m}})}\lesssim\|\phi_\varepsilon\|_{C\CC^\alpha(\rho^{\frac{3+6\alpha}{2m}})}
+\|\phi_\varepsilon\|_{C^{\frac{\alpha}{2}}L^\infty(\rho^{\frac{3+6\alpha}{2m}})}
\lesssim \|\Phi_\varepsilon\|_{C\CC^{-2+\alpha}(\rho^{\frac{3+6\alpha}{2m}})}\\\lesssim&1+2^{-(1-\alpha-\kappa)L}\| \phi_\varepsilon + \psi_\varepsilon\|_{L^\infty L^\infty(\rho^{\frac{1}{2}+\alpha})}+2^{-(\frac{3}{2}-\alpha-\kappa)\frac{4}{3}L}\| \phi_\varepsilon + \psi_\varepsilon\|_{L^\infty L^\infty(\rho^{\frac{1}{2}+\alpha})}^2\\&+2^{-(2-\alpha-\kappa)\frac{3L}{2}}(1+\| \phi_\varepsilon + \psi_\varepsilon\|_{L^\infty L^\infty(\rho^{\frac{1}{2}+\alpha})}^3)\\&+2^{-(2-\alpha-\kappa)\frac{3}{2} L}\sum_{l=4}^{m-1}(1+\|\phi_\varepsilon+\psi_\varepsilon\|_{L^\infty L^\infty(\rho^{\frac{1}{2}+\alpha})}^{\frac{3(m-l)}{m-3}}).\endaligned$$
Choose $L>0$ such that $\|\phi_\varepsilon+\psi_\varepsilon\|_{L^\infty L^\infty(\rho^{\frac{1}{2}+\alpha})}\lesssim 2^{(1-\alpha-\kappa)L}$ we have
\begin{equation}
\|\phi_\varepsilon\|_{L^\infty L^\infty(\rho^{\frac{3+6\alpha}{2m}})}\lesssim\|\phi_\varepsilon\|_{C\CC^\alpha(\rho^{\frac{3+6\alpha}{2m}})}+\|\phi_\varepsilon\|_{C^{\frac{\alpha}{2}}
L^\infty(\rho^{\frac{3+6\alpha}{2m}})}\lesssim1,
\end{equation}
where the omit constant is independent of $M$. So we can choose $L>0$ such that $\|\phi_\varepsilon+\psi_\varepsilon\|_{L^\infty L^\infty(\rho^{\frac{1}{2}+\alpha})}\lesssim 1+\|\psi_\varepsilon\|_{L^\infty L^\infty(\rho^{\frac{1}{2}+\alpha})}\backsimeq2^{(1-\alpha-\kappa)L}$.

\subsection{Bound for $\phi$ in  $C\CC^{\frac{1}{2}+\alpha}(\rho^{\frac{3+6\alpha}{2m}})$}
In the following we estimate $\Phi$ in $C\CC^{-\frac{3}{2}+\alpha}(\rho^{\frac{3+6\alpha}{2m}})$. We also put the estimates for the terms, which are similar as in [GH18], in Appendix A.2.  For other terms we have

$$\aligned\|\rmm{2\lambda_{2,\varepsilon} \VV_>Y_\varepsilon\succ(\phi_\varepsilon + \psi_\varepsilon)}\|_{C\CC^{-\frac{3}{2}+\alpha}(\rho^{\frac{3+6\alpha}{2m}})}\lesssim&2^{-(1-\alpha-\kappa)\frac{2L}{3}}\| \phi_\varepsilon + \psi_\varepsilon\|_{L^\infty L^\infty(\rho^{\frac{1}{2}+\alpha})}\lesssim 1+\|\psi_\varepsilon\|_{L^\infty L^\infty(\rho^{\frac{1}{2}+\alpha})}^\delta,\endaligned$$

$$\aligned&\|\rmm{\VV_>(Y_\varepsilon^\emptyset-\lambda_3)\succ(- \tttwo{\bar{Y}_\varepsilon}-\ttthree{Y_\varepsilon}  + \phi_\varepsilon + \psi_\varepsilon)^3}\|_{C\CC^{-\frac{3}{2}+\alpha}(\rho^{\frac{3+6\alpha}{2m}})}
\\\lesssim&\|\VV_>(Y_\varepsilon^\emptyset-\lambda_3)\|_{C\CC^{-\frac{3}{2}+\alpha}(\rho^{-\frac{3}{2}-3\alpha+\frac{3+6\alpha}{2m}})}(1+\| \phi_\varepsilon + \psi_\varepsilon\|_{L^\infty L^\infty(\rho^{\frac{1}{2}+\alpha})}^3)
\\\lesssim&2^{-(\frac{3}{2}-\alpha-\kappa)\frac{3L}{2}}\|Y_\varepsilon^\emptyset-\lambda_3\|_{C\CC^{-\kappa}(\rho^{\sigma})}(1+\| \phi_\varepsilon + \psi_\varepsilon\|_{L^\infty L^\infty(\rho^{\frac{1}{2}+\alpha})}^3)\lesssim 1+\|\psi_\varepsilon\|_{L^\infty L^\infty(\rho^{\frac{1}{2}+\alpha})}^\delta,\endaligned$$
where we used Lemma 2.4 to deduce $$\|3\VV_>(Y_\varepsilon^\emptyset-\lambda_3)\|_{C\CC^{-\frac{3}{2}+\alpha}(\rho^{-\frac{3}{2}-3\alpha+\frac{3+6\alpha}{2m}})}\lesssim 2^{-(\frac{3}{2}-\alpha-\kappa)\frac{3L}{2}}\|Y_\varepsilon^\emptyset-\lambda_3\|_{C\CC^{-\kappa}(\rho^{\sigma})},$$
and $5\alpha<\frac{3}{2m}$.

For $l=4,...,m-1$, we have on $\Omega_{\varepsilon,M_0}$
$$\aligned&\|\rmm{\VV_>Y_{1,\varepsilon}^{\emptyset,m-l}\succ\varepsilon^{\frac{l-3}{2}} (- \tttwo{\bar{Y}_\varepsilon}-\ttthree{Y_\varepsilon}  + \phi_\varepsilon + \psi_\varepsilon)^{l}}\|_{C\CC^{-\frac{3}{2}+\alpha}(\rho^{\frac{3+6\alpha}{2m}})}
\\\lesssim&\|\VV_>Y_{1,\varepsilon}^{\emptyset,m-l}\|_{C\CC^{-\frac{3}{2}+\alpha}(\rho^{-\frac{3}{2}-3\alpha+\frac{3+6\alpha}{2m}})}
\varepsilon^{\frac{l-3}{2}}\| (- \tttwo{\bar{Y}_\varepsilon}-\ttthree{Y_\varepsilon}+\phi_\varepsilon + \psi_\varepsilon)^{l}\|_{L^\infty L^\infty(\rho^{\frac{3+6\alpha}{2}})}\\\lesssim&\|\VV_>Y_{1,\varepsilon}^{\emptyset,m-l}\|_{C\CC^{-\frac{3}{2}+\alpha}(\rho^{-\frac{3}{2}-3\alpha+\frac{3+6\alpha}{2m}})}
[\varepsilon^{\frac{m-3}{2}}\| (- \tttwo{\bar{Y}_\varepsilon}-\ttthree{Y_\varepsilon}+\phi_\varepsilon + \psi_\varepsilon)^{m}\|_{L^\infty L^\infty(\rho^{\frac{3+6\alpha}{2}})}]^{\frac{l-3}{m-3}}\\&(1+\|\phi_\varepsilon+\psi_\varepsilon\|_{L^\infty L^\infty(\rho^{\frac{1}{2}+\alpha})}^{\frac{3(m-l)}{m-3}})\\\lesssim&2^{-(\frac{3}{2}-\alpha-\kappa)\frac{3}{2} L}(1+\|\phi_\varepsilon+\psi_\varepsilon\|_{L^\infty L^\infty(\rho^{\frac{1}{2}+\alpha})}^{\frac{3(m-l)}{m-3}})\lesssim 1+\|\psi_\varepsilon\|_{L^\infty L^\infty(\rho^{\frac{1}{2}+\alpha})}^\delta,\endaligned$$
where
the constant we omit is independent of $M$ and we use $\|\VV_>(Y^{\emptyset,m-l}_{1,\varepsilon})\|_{C\CC^{-\frac{3}{2}+\alpha}(\rho^{-\frac{3}{2}-3\alpha+\frac{3+6\alpha}{2m}})}
\lesssim2^{-(\frac{3}{2}-\alpha-\kappa)\frac{3}{2} L}
\|Y^{\emptyset,m-l}_{1,\varepsilon}\|_{C\CC^{-\kappa}(\rho^{\sigma})}$,  $\|Y^{\emptyset,m-l}_{1,\varepsilon}\|_{C\CC^{-\kappa}(\rho^\sigma)}\lesssim \varepsilon^{\frac{\kappa}{2}}$ and $\varepsilon^\frac{\kappa}{2}M\leq 1$ in the last step.

Similar estimates also hold for the term containing $Y^{\emptyset,l}_{2,\varepsilon}$.

Combining all the above estimates and the estimates in Appendix A.2 we have

\begin{equation}\aligned&\|\phi_\varepsilon\|_{C\CC^{\frac{1}{2}+\alpha}(\rho^{\frac{3+6\alpha}{2m}})}
+\|\phi_\varepsilon\|_{C^{\frac{1}{4}+\frac{\alpha}{2}}L^\infty(\rho^{\frac{3+6\alpha}{2m}})}
\lesssim1+\|\psi_\varepsilon\|_{L^\infty L^\infty(\rho^{\frac{1}{2}+\alpha})}^\delta,\endaligned\end{equation}
where the constant we omit is independent of $M$.

\subsection{Bound for $\vartheta$ in  $C\CC^{1+\alpha}(\rho^{\frac{3}{2}+\gamma'})$}

In this subsection we use another small parameter $\gamma'$ with $0<\gamma'<\gamma_1$ and close to $\gamma_1$.  Now we do the estimate for $\vartheta$.
$$ \LL\vartheta_\varepsilon=\Theta_\varepsilon,$$
with
$$\aligned\Theta_\varepsilon=&\Phi_\varepsilon+\rmm{3\ttwo{Y_\varepsilon}\succ(\tttwo{\bar{Y}_\varepsilon}
+\ttthree{Y_\varepsilon})}-\rmm{3\VV_>\ttwo{Y_\varepsilon}\succ(\phi_\varepsilon+\psi_\varepsilon)}
-\rmb{3\VV_\leq\ttwo{Y_\varepsilon}\succ(\phi_\varepsilon+\psi_\varepsilon)}\\&-3\Big( (- \tttwo{\bar{Y}_\varepsilon}-\ttthree{Y_\varepsilon}  + \phi_\varepsilon + \psi_\varepsilon)\Prec\ttwo{Y_\varepsilon}- (- \tttwo{\bar{Y}_\varepsilon}-\ttthree{Y_\varepsilon}  + \phi_\varepsilon + \psi_\varepsilon)\prec\ttwo{Y_\varepsilon}\Big)\\
&-3[\LL,(- \tttwo{\bar{Y}_\varepsilon}-\ttthree{Y_\varepsilon}+\phi_\varepsilon+\psi_\varepsilon)\Prec]\tttwo{Y_\varepsilon}.\endaligned$$
Similarly as above we put all the terms similar as in [GH18] in Appendix A.3. For other terms we have

$$\aligned\|\rmm{2\lambda_{2,\varepsilon} \VV_>Y_\varepsilon\succ(\phi_\varepsilon + \psi_\varepsilon)}\|_{C\CC^{-1+\alpha}(\rho^{\frac{3+6\alpha}{2m}})}\lesssim&2^{-(\frac{1}{2}-\alpha-\kappa)\frac{2L}{3}}\| \phi_\varepsilon + \psi_\varepsilon\|_{L^\infty L^\infty(\rho^{\frac{1}{2}+\alpha})}\lesssim 1+\|\psi_\varepsilon\|_{L^\infty L^\infty(\rho^{\frac{1}{2}+\alpha})}^\delta,\endaligned$$

$$\aligned&\|\rmm{\VV_>(Y^\emptyset_\varepsilon-\lambda_3)\succ(- \tttwo{\bar{Y}_\varepsilon}-\ttthree{Y_\varepsilon}  + \phi_\varepsilon + \psi_\varepsilon)^3}\|_{C\CC^{-1+\alpha}(\rho^{\frac{3+6\alpha}{2m}+\frac{1}{2}+\alpha})}
\\\lesssim&\|\VV_>(Y^\emptyset_\varepsilon-\lambda_3)\|_{C\CC^{-1+\alpha}(\rho^{-1-2\alpha+\frac{3+6\alpha}{2m}})}(1+\| \phi_\varepsilon + \psi_\varepsilon\|_{L^\infty L^\infty(\rho^{\frac{1}{2}+\alpha})}^3)
\\\lesssim&2^{-(1-\alpha-\kappa)\frac{3L}{2}}\|Y^\emptyset_\varepsilon-\lambda_3\|_{C\CC^{-\kappa}(\rho^{\sigma})}(1+\| \phi_\varepsilon + \psi_\varepsilon\|_{L^\infty L^\infty(\rho^{\frac{1}{2}+\alpha})}^3)\lesssim 1+\|\psi_\varepsilon\|_{L^\infty L^\infty(\rho^{\frac{1}{2}+\alpha})}^{1+\delta},\endaligned$$

For $l=4,...,m-1$, we have
$$\aligned&\|\rmm{\VV_>Y^{\emptyset,m-l}_{1,\varepsilon}\succ\varepsilon^{\frac{l-3}{2}} (- \tttwo{\bar{Y}_\varepsilon}-\ttthree{Y_\varepsilon}  + \phi_\varepsilon + \psi_\varepsilon)^{l}}\|_{C\CC^{-1+\alpha}(\rho^{\frac{3+6\alpha}{2m}+\frac{1}{2}+\alpha})}
\\\lesssim&\|\VV_>Y^{\emptyset,m-l}_{1,\varepsilon}\|_{C\CC^{-1+\alpha}(\rho^{-1-2\alpha+\frac{3+6\alpha}{2m}})}
\varepsilon^{\frac{l-3}{2}}\| (- \tttwo{\bar{Y}_\varepsilon}-\ttthree{Y_\varepsilon}+\phi_\varepsilon + \psi_\varepsilon)^{l}\|_{L^\infty L^\infty(\rho^{\frac{3+6\alpha}{2}})}\\\lesssim&\|\VV_>Y^{\emptyset,m-l}_{1,\varepsilon}\|_{C\CC^{-1+\alpha}(\rho^{-1-2\alpha+\frac{3+6\alpha}{2m}})}
[\varepsilon^{\frac{m-3}{2}}\| (- \tttwo{\bar{Y}_\varepsilon}-\ttthree{Y_\varepsilon}+\phi_\varepsilon + \psi_\varepsilon)^{m}\|_{L^\infty L^\infty(\rho^{\frac{3+6\alpha}{2}})}]^{\frac{l-3}{m-3}}\\&(1+\|\phi_\varepsilon+\psi_\varepsilon\|_{L^\infty L^\infty(\rho^{\frac{1}{2}+\alpha})}^{\frac{3(m-l)}{m-3}})\\\lesssim&2^{-(1-\alpha-\kappa)\frac{3}{2} L}(1+\|\phi_\varepsilon+\psi_\varepsilon\|_{L^\infty L^\infty(\rho^{\frac{1}{2}+\alpha})}^{\frac{3(m-l)}{m-3}})\lesssim 1+\|\psi_\varepsilon\|_{L^\infty L^\infty(\rho^{\frac{1}{2}+\alpha})}^{1+\delta},\endaligned$$
where
the constant we omit is independent of $M$ and we use $\|\VV_>(Y^{\emptyset,m-l}_{1,\varepsilon})\|_{C\CC^{-1+\alpha}(\rho^{-1-2\alpha+\frac{3+6\alpha}{2m}})}\lesssim2^{-(1-\alpha-\kappa)\frac{3}{2} L}
\|Y^{\emptyset,m-l}_{1,\varepsilon}\|_{C\CC^{-\kappa}(\rho^{\sigma})}$,  $\|Y^{\emptyset,m-l}_{1,\varepsilon}\|_{C\CC^{-\kappa}(\rho^\sigma)}\lesssim \varepsilon^{\frac{\kappa}{2}}$ and $\varepsilon^\frac{\kappa}{2}M\leq 1$ in the last step.
For the term containing $Y^{\emptyset,l}_{2,\varepsilon}$  we have similar estimates. Furthermore,  we have
$$\aligned&\|\rmb{3\VV_\leq\ttwo{Y_\varepsilon}\succ(\phi_\varepsilon+\psi_\varepsilon)}\|_{C\CC^{-1+\alpha}(\rho^{1/2+2\alpha+2\kappa})}
\lesssim\|\VV_\leq\ttwo{Y_\varepsilon}\|_{C\CC^{-1+\alpha}(\rho^{\alpha+2\kappa})}(1+\| \psi_\varepsilon\|_{L^\infty L^\infty(\rho^{\frac{1}{2}+\alpha})})\\\lesssim&2^{(\alpha+\kappa)L}(1+\| \psi_\varepsilon\|_{L^\infty L^\infty(\rho^{\frac{1}{2}+\alpha})})\lesssim 1+\|\psi_\varepsilon\|_{L^\infty L^\infty(\rho^{\frac{1}{2}+\alpha})}^{1+\delta},\endaligned$$

For $\gamma''<\gamma'<\gamma_1$ and $\gamma''$ is close to $\gamma$, by Lemma 2.7 we have

$$\aligned &3\|(- \tttwo{\bar{Y}_\varepsilon}-\ttthree{Y_\varepsilon}  + \phi_\varepsilon + \psi_\varepsilon)\Prec\ttwo{Y_\varepsilon}- (- \tttwo{\bar{Y}_\varepsilon}-\ttthree{Y_\varepsilon}  + \phi_\varepsilon + \psi_\varepsilon)\prec\ttwo{Y_\varepsilon}\|_{C\CC^{-1+\alpha}(\rho^{\frac{3}{2}+\gamma'})}\\\lesssim&\|- \tttwo{\bar{Y}_\varepsilon}-\ttthree{Y_\varepsilon}  + \phi_\varepsilon + \psi_\varepsilon\|_{C^{(\alpha+\kappa)/2}L^\infty(\rho^{\frac{3}{2}+\gamma''})}\|\ttwo{Y_\varepsilon}\|_{C\CC^{-1-\kappa}(\rho^{\sigma})}\\\lesssim&1+\| \phi_\varepsilon + \psi_\varepsilon\|_{C^{(\alpha+\kappa)/2}L^\infty(\rho^{\frac{3}{2}+\gamma''})},
\endaligned$$

$$\aligned &\|[\LL,(- \tttwo{\bar{Y}_\varepsilon}-\ttthree{Y_\varepsilon}+\phi_\varepsilon+\psi_\varepsilon)\Prec]\tttwo{Y_\varepsilon}\|_{C\CC^{-1+\alpha}(\rho^{\frac{3}{2}+\gamma'})}
\\\lesssim&(\|- \tttwo{\bar{Y}_\varepsilon}-\ttthree{Y_\varepsilon}  + \phi_\varepsilon + \psi_\varepsilon\|_{C^{(\alpha+\kappa)/2}L^\infty(\rho^{\frac{3}{2}+\gamma''})}
+\|- \tttwo{\bar{Y}_\varepsilon}-\ttthree{Y_\varepsilon}  + \phi_\varepsilon + \psi_\varepsilon\|_{C\CC^{\alpha+\kappa}(\rho^{\frac{3}{2}+\gamma''})})\|\tttwo{Y_\varepsilon}\|_{C\CC^{1-\kappa}(\rho^{\sigma})}\\\lesssim&1+\| \phi_\varepsilon + \psi_\varepsilon\|_{C^{(\alpha+\kappa)/2}L^\infty(\rho^{\frac{3}{2}+\gamma''})}+\| \phi_\varepsilon + \psi_\varepsilon\|_{C\CC^{\alpha+\kappa}(\rho^{\frac{3}{2}+\gamma''})}.\endaligned$$

By interpolation and Young's inequality we have
\begin{equation}\aligned\| \phi_\varepsilon + \psi_\varepsilon\|_{C^{(\alpha+\kappa)/2}L^\infty(\rho^{\frac{3}{2}+\gamma''})}
\lesssim&\delta_0\| \phi_\varepsilon + \psi_\varepsilon\|_{C^{(\alpha+\frac{1}{2})/2}L^\infty(\rho^{\frac{3}{2}+\gamma_1})}+\| \phi_\varepsilon + \psi_\varepsilon\|_{L^\infty L^\infty(\rho^{\frac{1}{2}+\alpha})}
\\\lesssim&\delta_0\| \psi_\varepsilon\|_{C^{1-\gamma}L^\infty(\rho^{\frac{3}{2}+\gamma_1})}+\|\phi_\varepsilon\|_{C^{(\alpha+\frac{1}{2})/2}
L^\infty(\rho^{\frac{1}{2}+\alpha})}+\| \phi_\varepsilon + \psi_\varepsilon\|_{L^\infty L^\infty(\rho^{\frac{1}{2}+\alpha})}.\endaligned\end{equation}

We have
$$\aligned\| \psi_\varepsilon\|_{C\CC^{\alpha+\kappa}(\rho^{\frac{3}{2}+\gamma''})}
\lesssim&\delta_0\|\psi_\varepsilon\|_{C\CC^{\frac{1}{2}+\alpha}(\rho^{\frac{3}{2}+\gamma'})}+\| \psi_\varepsilon\|_{L^\infty L^\infty(\rho^{\frac{1}{2}+\alpha})}
.\endaligned$$

Combining the estimates as above and the estimates in Section A.3, we have
\begin{equation}\|\vartheta_\varepsilon\|_{C^{\frac{1+\alpha}{2}}L^\infty(\rho^{\frac{3}{2}+\gamma'})}+\|\vartheta_\varepsilon\|_{C\CC^{1+\alpha}(\rho^{\frac{3}{2}+\gamma'})}\lesssim1+\delta_0\| \psi_\varepsilon\|_{C^{1-\gamma}L^\infty(\rho^{\frac{3}{2}+\gamma_1})}+\|\psi_\varepsilon\|_{L^\infty L^\infty(\rho^{\frac{1}{2}+\alpha})}^{1+\delta}+\|\psi_\varepsilon\|_{C\CC^{\frac{1}{2}+\alpha}(\rho^{\frac{3}{2}+\gamma'})},\end{equation}
where the constant we omit is independent of $M$.

\subsection{Bound for $\psi$ in  $C\CC^{2-\gamma}(\rho^{\frac{3}{2}+\gamma_1})$}

In this subsection we do the estimates for $\Psi$ in $C\CC^{-\gamma}(\rho^{\frac{3}{2}+\gamma_1})$, which is a little bit different from that in [GH18], since we have to consider $G_\varepsilon^{(m_1)}(\varepsilon^{\frac{1}{2}}Y_\varepsilon+\theta \varepsilon^{\frac{1}{2}}v_\varepsilon)$, which is not in $\CC^\gamma$ uniformly. In this subsection we will use $\|\psi_\varepsilon\|_{L^\infty L^\infty(\rho^{\frac{1}{2}+\alpha})}\backsimeq2^{(1-\alpha-\kappa)L}$ to control $2^{\delta L}$ from the estimate of the terms containing $\VV_\leq$.  We also put  similar terms in Appendix A.4. For other terms we have

$$\aligned&\|\rmb{2\lambda_{2,\varepsilon} \VV_\leq Y_\varepsilon\succ(\phi_\varepsilon + \psi_\varepsilon)}\|_{C\CC^{-\gamma}(\rho^{\frac{3}{2}+\gamma_1})}
\lesssim\| \VV_\leq Y_\varepsilon\|_{C\CC^{-\gamma}(\rho^{1+\gamma_1-\alpha})}\|\phi_\varepsilon + \psi_\varepsilon\|_{L^\infty L^\infty(\rho^{\frac{1}{2}+\alpha})}\\\lesssim& 2^{(\frac{1}{2}-\gamma+\kappa)\frac{2L}{3}}(1+\| \psi_\varepsilon\|_{L^\infty L^\infty(\rho^{\frac{1}{2}+\alpha})})\lesssim1+\| \psi_\varepsilon\|_{L^\infty L^\infty(\rho^{\frac{1}{2}+\alpha})}^{1+\delta},\endaligned$$

$$\aligned&\|\rmb{2\lambda_{2,\varepsilon} Y_\varepsilon\preccurlyeq(\phi_\varepsilon+\psi_\varepsilon)}\|_{C\CC^{-\gamma}(\rho^{\frac{3}{2}+\gamma_1})}
\lesssim\|Y_\varepsilon\|_{C\CC^{-\frac{1}{2}-\kappa}(\rho^{\sigma})}\|\phi_\varepsilon + \psi_\varepsilon\|_{C\CC^{\frac{1}{2}+\alpha}(\rho^{\frac{3}{2}+\gamma''})}\\\lesssim&1+\|\psi_\varepsilon\|_{L^\infty L^\infty(\rho^{\frac{1}{2}+\alpha})}^\delta+\|\psi_\varepsilon\|_{C\CC^{\frac{1}{2}+\alpha}(\rho^{\frac{3}{2}+\gamma''})},\endaligned$$

$$\aligned&\|\rmb{\lambda_{2,\varepsilon}(- \tttwo{\bar{Y}_\varepsilon}-\ttthree{Y_\varepsilon}  + \phi_\varepsilon + \psi_\varepsilon)^2}\|_{C\CC^{-\gamma}(\rho^{\frac{3}{2}+\gamma_1})}\lesssim1+\| \psi_\varepsilon\|_{L^\infty L^\infty(\rho^{\frac{1}{2}+\alpha})}^2.\endaligned$$

$$\aligned&\|\rmb{2\lambda_{2,\varepsilon} \VV_\leq Y_\varepsilon\succ(\phi_\varepsilon + \psi_\varepsilon)}\|_{C\CC^{-\gamma}(\rho^{\frac{3}{2}+\gamma_1})}
\lesssim\|\VV_\leq Y_\varepsilon\|_{C\CC^{-\gamma}(\rho^{1+\gamma_1-\alpha})}\|\phi_\varepsilon+\psi_\varepsilon\|_{L^\infty L^\infty(\rho^{\frac{1}{2}+\alpha})}
\\\lesssim&2^{(\frac{1}{2}+\alpha)\frac{4L}{3}}\| \phi_\varepsilon + \psi_\varepsilon\|_{L^\infty L^\infty(\rho^{\frac{1}{2}+\alpha})}\lesssim1+\| \psi_\varepsilon\|_{L^\infty L^\infty(\rho^{\frac{1}{2}+\alpha})}^{1+\delta},\endaligned$$

$$\aligned&\|\rmb{2\lambda_{2,\varepsilon} {Y_\varepsilon}\preccurlyeq(\phi_\varepsilon+\psi_\varepsilon)}\|_{C\CC^{-\gamma}(\rho^{\frac{3}{2}+\gamma_1})}
\lesssim\|\phi_\varepsilon+\psi_\varepsilon\|_{C\CC^{\frac{1}{2}+\gamma+\kappa}(\rho^{\frac{3}{2}+\gamma_1-\sigma})}
\\\lesssim&
1+\| \psi_\varepsilon\|_{L^\infty L^\infty(\rho^{\frac{1}{2}+\alpha})}^\delta+\| \psi_\varepsilon\|_{C\CC^{\frac{1}{2}+\gamma+\kappa}(\rho^{1+\gamma_1-\sigma-\alpha})},\endaligned$$

$$\aligned&\|\rmb{(-\tttwo{\bar{Y}_\varepsilon}+\phi_\varepsilon+\psi_\varepsilon)^2}\|_{C\CC^{-\gamma}(\rho^{\frac{3}{2}+\gamma_1})}
\lesssim1+\| \phi_\varepsilon + \psi_\varepsilon\|_{L^\infty L^\infty(\rho^{\frac{1}{2}+\alpha})}^2.\endaligned$$

For the terms containing $Y^\emptyset$ we have

$$\aligned&\|\rmb{(Y^\emptyset_\varepsilon-\lambda_3)\preccurlyeq(- \tttwo{\bar{Y}_\varepsilon}-\ttthree{Y_\varepsilon}  + \phi_\varepsilon + \psi_\varepsilon)^3}\|_{C\CC^{-\gamma}(\rho^{\frac{3}{2}+\gamma_1})}
\\\lesssim&\|Y^\emptyset_\varepsilon-\lambda_3\|_{C\CC^{-\kappa}(\rho^{\sigma})}(1+\| (\phi_\varepsilon + \psi_\varepsilon)^3\|_{C\CC^{\kappa+\gamma}(\rho^{\frac{3}{2}+\gamma_1-\sigma})})
\\\lesssim&\|Y^\emptyset_\varepsilon-\lambda_3\|_{C\CC^{-\kappa}(\rho^{\sigma})}(1+\| \phi_\varepsilon + \psi_\varepsilon\|_{C\CC^{\kappa+\gamma}(\rho^{\frac{1}{2}+\gamma_1-2\alpha-\sigma})})(1+\| \phi_\varepsilon + \psi_\varepsilon\|_{L^\infty L^\infty(\rho^{\frac{1}{2}+\alpha})}^2)\\\lesssim&(1+\|\psi_\varepsilon\|_{C\CC^{\kappa+\gamma}(\rho^{\frac{1}{2}+\gamma_1-2\alpha-\sigma})})(1+\| \psi_\varepsilon\|_{L^\infty L^\infty(\rho^{\frac{1}{2}+\alpha})}^2),\endaligned$$

and
$$\aligned&\|\rmb{\VV_\leq(Y^\emptyset_\varepsilon-\lambda_3)\succ(- \tttwo{\bar{Y}_\varepsilon}-\ttthree{Y_\varepsilon}  + \phi_\varepsilon + \psi_\varepsilon)^3}\|_{C\CC^{-\gamma}(\rho^{\frac{3}{2}+\gamma_1})}
\\\lesssim&\|\VV_\leq(Y^\emptyset_\varepsilon-\lambda_3)\|_{C\CC^{-\gamma}(\rho^{\gamma_1-3\alpha})}(1+\|\phi_\varepsilon + \psi_\varepsilon\|^3_{L^\infty  L^\infty(\rho^{\frac{1}{2}+\alpha})})
\\\lesssim&2^{\alpha \frac{3L}{2}}\|Y^\emptyset_\varepsilon-\lambda_3\|_{C\CC^{-\kappa}(\rho^{\sigma})}(1+\|\psi_\varepsilon\|_{L^\infty L^\infty(\rho^{\frac{1}{2}+\alpha})}^{3})\\\leq&\|Y^\emptyset_\varepsilon-\lambda_3\|_{C\CC^{-\kappa}(\rho^{\sigma})}(1+\|\psi_\varepsilon\|_{L^\infty L^\infty(\rho^{\frac{1}{2}+\alpha})}^{3+\delta}).\endaligned$$

For $\gamma_1>4\alpha$, $k=1,...,m-1$ we have
$$\aligned&\|\rmb{\varepsilon^{\frac{m-3}{2}}C_m^k(- \tttwo{\bar{Y}_\varepsilon}-\ttthree{Y_\varepsilon}  + \phi)^k\psi_\varepsilon^{m-k}}\|_{C\CC^{-\gamma}(\rho^{\frac{3}{2}+\gamma_1})}
\\\lesssim&\varepsilon^{\frac{m-3}{2}}\|(- \tttwo{\bar{Y_\varepsilon}}-\ttthree{Y_\varepsilon}  + \phi_\varepsilon)^k\|_{L^\infty L^\infty(\rho^{\frac{k(3+6\alpha)}{2m}})}\|\psi_\varepsilon\|_{L^\infty L^\infty(\rho^{\frac{3+6\alpha}{2m}})}^{m-k}
\lesssim1,
\endaligned$$
where the constant we omit is independent of $M$ and we used $\varepsilon^{\frac{(m-3)k}{2m}}M\leq1$.

For $k=0,1,...,l$, $l=4,...,m-1$,  we consider the following three cases: for $k\leq \frac{3(m-l)}{m-3}$ we have
$$\aligned&\|\rmb{\varepsilon^{\frac{l-3}{2}}C_l^k(- \tttwo{\bar{Y}_\varepsilon}-\ttthree{Y_\varepsilon}  + \phi_\varepsilon)^k\psi_\varepsilon^{l-k}}\|_{C\CC^{-\gamma}(\rho^{\frac{3}{2}+\gamma_1})}
\\\lesssim&\varepsilon^{\frac{l-3}{2}}\|(- \tttwo{\bar{Y}_\varepsilon}-\ttthree{Y_\varepsilon}  + \phi_\varepsilon)^k\|_{L^\infty L^\infty(\rho^{\frac{k(3+6\alpha)}{2m}})}\|\psi_\varepsilon^{l-k}\|_{L^\infty L^\infty(\rho^{\frac{(3+6\alpha)(m-k)}{2m}})}
\\\lesssim& [\varepsilon^{\frac{m-3}{2}}\|\psi_\varepsilon\|_{L^\infty L^\infty(\rho^{\frac{3+6\alpha}{2m}})}^{m}]^{\frac{l-3}{m-3}}\|\psi_\varepsilon\|_{L^\infty L^\infty(\rho^{\frac{1+2\alpha}{2}})}^{\frac{3(m-l)-km+3k}{m-3}}
\\\lesssim&1+\varepsilon^{\frac{m-3}{2}}\|\psi_\varepsilon\|_{L^\infty L^\infty(\rho^{\frac{3+6\alpha}{2m}})}^{m}+\|\psi_\varepsilon\|_{L^\infty L^\infty(\rho^{\frac{1+2\alpha}{2}})}^{3-k\frac{m-3}{m-l}},\endaligned$$
where we used $-\frac{k(3+6\alpha)}{2m}>-k\frac{1+2\alpha}{2}$ to have the bounds for the weight.
For $l>k> \frac{3(m-l)}{m-3}$ we have
$$\aligned&\|\rmb{\varepsilon^{\frac{l-3}{2}}C_l^k(- \tttwo{\bar{Y}_\varepsilon}-\ttthree{Y_\varepsilon}  + \phi_\varepsilon)^k\psi_\varepsilon^{l-k}}\|_{C\CC^{-\gamma}(\rho^{\frac{3}{2}+\gamma_1})}
\\\lesssim& [\varepsilon^{\frac{m-3}{2}}\|\psi_\varepsilon\|_{L^\infty L^\infty(\rho^{\frac{3+6\alpha}{2m}})}^{m}]^{\frac{l-k}{m}}\varepsilon^{\frac{l-3}{2}-\frac{(m-3)(l-k)}{2m}}
\lesssim1+\varepsilon^{\frac{m-3}{2}}\|\psi_\varepsilon\|_{L^\infty L^\infty(\rho^{\frac{3+6\alpha}{2m}})}^{m},\endaligned$$
where we used $k> \frac{3(m-l)}{m-3}$ to deduce $\frac{l-3}{2}-\frac{(m-3)(l-k)}{2m}>0$.
For $k=l$ the bound holds obviously.

For $\gamma_1>4\alpha$, $\kappa_0=2\alpha$ we have

$$\aligned&\|\rmb{Y^{\emptyset,m-l}_{1,\varepsilon}\preccurlyeq\varepsilon^{\frac{l-3}{2}} (- \tttwo{\bar{Y}_\varepsilon}-\ttthree{Y_\varepsilon}  + \phi_\varepsilon + \psi_\varepsilon)^{l}}\|_{C\CC^{-\gamma}(\rho^{\frac{3}{2}+\gamma_1})}
\\\lesssim&\|Y^{\emptyset,m-l}_{1,\varepsilon}\|_{C\CC^{-\kappa_0-\epsilon}(\rho^{\sigma})}\varepsilon^{\frac{l-3}{2}}(1+\| (\phi_\varepsilon + \psi_\varepsilon)^{l}\|_{C\CC^{\kappa_0+\gamma+\epsilon}(\rho^{\frac{3}{2}+\gamma_1-\sigma})})
\\\lesssim&\varepsilon^{\frac{\epsilon}{2}}\varepsilon^{\frac{l-3+2\kappa_0}{2}}(1+\| (\phi_\varepsilon + \psi_\varepsilon)^{l}\|_{C\CC^{\kappa_0+\gamma+\epsilon}(\rho^{\frac{3}{2}+\gamma_1-\sigma})})
\\\lesssim&1+\varepsilon^{\frac{\epsilon}{2}}(\varepsilon^{\frac{m-3}{2}}\| \psi\|_{L^\infty L^\infty(\rho^{\frac{3+6\alpha}{2m}})}^m)^{\frac{l-3+2\kappa_0}{m-3}}
\|\psi_\varepsilon\|_{C\CC^{2-\gamma}(\rho^{\frac{3}{2}+\gamma_1})}^{\frac{\gamma+\kappa_0+\epsilon}{2-\gamma}}\| \psi_\varepsilon\|_{L^\infty L^\infty(\rho^{\frac{1}{2}+\alpha})}^{\frac{3(m-l)-2m\kappa_0}{m-3}-\frac{\gamma+\kappa_0+\epsilon}{2-\gamma}}
\\\lesssim&1+
\|\psi_\varepsilon\|_{C\CC^{2-\gamma}(\rho^{\frac{3}{2}+\gamma_1})}^{\frac{\gamma+\kappa_0+\epsilon}{2-\gamma}}\| \psi_\varepsilon\|_{L^\infty L^\infty(\rho^{\frac{1}{2}+\alpha})}^{\frac{3(m-l)-2m\kappa_0}{m-3}-\frac{\gamma+\kappa_0+\epsilon}{2-\gamma}}\lesssim1+
\delta_0\|\psi_\varepsilon\|_{C\CC^{2-\gamma}(\rho^{\frac{3}{2}+\gamma_1})}+\| \psi_\varepsilon\|_{L^\infty L^\infty(\rho^{\frac{1}{2}+\alpha})}^{3+\delta},\endaligned$$
where the constant we omit is independent of $M$ and we used $\kappa_0= 2\alpha$ and $\gamma+\epsilon<\alpha$ to have the following bounds for weight
$$\frac{3+6\alpha}{2}\frac{m-l-2\kappa_0}{m-3}-\frac{\kappa_0+\gamma+\epsilon}{2-\gamma}(\frac{3}{2}+\gamma_1)
>(\frac{3(m-l)-2\kappa_0m}{m-3}-\frac{\kappa_0+\gamma+\epsilon}{2-\gamma})(\frac{1}{2}+\alpha).$$

We also have
$$\aligned&\|\rmb{\VV_\leq Y^{\emptyset,m-l}_{1,\varepsilon}\succ\varepsilon^{\frac{l-3}{2}} (- \tttwo{\bar{Y}_\varepsilon}-\ttthree{Y_\varepsilon}  + \phi_\varepsilon + \psi_\varepsilon)^{l}}\|_{C\CC^{-\gamma}(\rho^{\frac{3}{2}+\gamma_1})}
\\\lesssim&\|\VV_\leq Y^{\emptyset,m-l}_{1,\varepsilon}\|_{C\CC^{-\gamma}(\rho^{\gamma+\sigma+\kappa})} \varepsilon^{\frac{l-3}{2}} \|(- \tttwo{\bar{Y}_\varepsilon}-\ttthree{Y_\varepsilon}  + \phi_\varepsilon + \psi_\varepsilon)^{l}\|_{L^\infty L^\infty(\rho^{\frac{3+6\alpha}{2}})}\\\lesssim&2^{\kappa\frac{3}{2} L}\varepsilon^{\frac{\kappa}{2}}[1+(\varepsilon^{\frac{m-3}{2}} \|\psi_\varepsilon\|_{L^\infty L^\infty(\rho^{\frac{3+6\alpha}{2m}})}^m)^{\frac{l-3}{m-3}}\|\psi_\varepsilon\|^{\frac{3(m-l)}{m-3}}_{L^\infty L^\infty(\rho^{\frac{1+2\alpha}{2}})}]\\\lesssim&1+\|\psi_\varepsilon\|^{2+\delta}_{L^\infty L^\infty(\rho^{\frac{1+2\alpha}{2}})},\endaligned$$
where the constant we omit is independent of $M$ and we use $\varepsilon^{\frac{\kappa}{2}}M\leq 1$.  For the terms containing $Y^{\emptyset,l}_{2,\varepsilon}$ we have similar estimate. For $G_\varepsilon^{(m_1)}$ part we have the following estimate

$$\aligned&\|\rmb{\int_0^1G_\varepsilon^{(m_1)}(\varepsilon^{\frac{1}{2}}Y_\varepsilon+\theta\varepsilon^{\frac{1}{2}}v_\varepsilon)
\varepsilon^{\frac{m_1-3}{2}} (- \tttwo{\bar{Y}_\varepsilon}-\ttthree{Y_\varepsilon}  + \phi_\varepsilon + \psi_\varepsilon)^{m_1}\tau^{m_1-3}\frac{(1-\tau)^2}{2!}d\tau}\|_{C\CC^{-\gamma}(\rho^{\frac{3}{2}+\gamma_1})}
\\\lesssim&(\varepsilon^{\frac{m-3}{2}} (1+\|\psi_\varepsilon\|_{L^\infty L^\infty(\rho^{\frac{3+6\alpha}{2m}})}^m)^{\frac{m_1-3}{m-3}}(1+\|\psi_\varepsilon\|^{\frac{3(m-m_1)}{m-3}}_{L^\infty L^\infty(\rho^{\frac{1+2\alpha}{2}})})\\\lesssim&1+\varepsilon^{\frac{m-3}{2}} \|\psi_\varepsilon\|_{L^\infty L^\infty(\rho^{\frac{3+6\alpha}{2m}})}^m+\|\psi_\varepsilon\|_{L^\infty L^\infty (\rho^{\frac{1+2\alpha}{2}})}^3.\endaligned$$

$$\aligned&\|\varepsilon^{\frac{m-3}{2}} \psi_\varepsilon^m\|_{C\CC^{-\gamma}(\rho^{\frac{3}{2}+\gamma_1})}
\lesssim\varepsilon^{\frac{m-3}{2}}\|\psi_\varepsilon\|_{L^\infty L^\infty (\rho^{\frac{3+6\alpha}{2m}})}^m.\endaligned$$
Combining the above estimates and extra estimates in Appendix A.4 we obtain
$$\aligned&\|\psi_\varepsilon\|_{C\CC^{2-\gamma}(\rho^{\frac{3}{2}+\gamma_1})}+\|\psi_\varepsilon\|_{C^{1-\frac{\gamma}{2}}L^\infty(\rho^{\frac{3}{2}+\gamma_1})}
\\\lesssim&1+\|\psi_\varepsilon\|_{C\CC^{1+\alpha}(\rho^{\frac{3}{2}+\gamma'})}+\delta_0\| \psi_\varepsilon\|_{C^{1-\gamma}L^\infty(\rho^{\frac{3}{2}+\gamma_1})}+\|\psi_\varepsilon\|_{L^\infty L^\infty(\rho^{\frac{1}{2}+\alpha})}^{3+\delta}+\| \psi_\varepsilon\|_{C\CC^{\frac{1}{2}+\alpha}(\rho^{\frac{3}{2}+\gamma''})}\\&+(1+\| \psi_\varepsilon\|_{L^\infty L^\infty(\rho^{\frac{1}{2}+\alpha})}^{1+\delta})\|\psi_\varepsilon\|_{C\CC^{\gamma}(\rho^{\frac{1}{2}+2\alpha})}+\delta_0\| \psi_\varepsilon\|_{C\CC^{2-\gamma}(\rho^{\frac{3}{2}+\gamma_1})}\\&+(\| \psi_\varepsilon\|_{L^\infty L^\infty(\rho^{\frac{1}{2}+\alpha})}+1)
\| \psi_\varepsilon\|_{C\CC^{\frac{1}{2}+\gamma+\kappa}(\rho^{1+\gamma_1-\sigma-\alpha})}
+\|\psi_\varepsilon\|_{C\CC^{\alpha}(\rho^{\frac{1}{2}+\gamma_1-2\alpha-\sigma})}\| \psi_\varepsilon\|_{L^\infty L^\infty(\rho^{\frac{1}{2}+\alpha})}^2\\&+\varepsilon^{\frac{m-3}{2}} \|\psi_\varepsilon\|_{L^\infty L^\infty(\rho^{\frac{3+6\alpha}{2m}})}^m .\endaligned$$
Now by Lemma 2.1 we have the following interpolation inequalities:
$$\|\psi_\varepsilon\|_{C\CC^{1+\alpha}(\rho^{\frac{3}{2}+\gamma'})}\lesssim\|\psi_\varepsilon\|_{C\CC^{2-\gamma}
(\rho^{\frac{3}{2}+\gamma_1})}^{\frac{1+\alpha}{2-\gamma}}
\|\psi_\varepsilon\|_{L^\infty L^\infty(\rho^{\frac{1}{2}+\alpha})}^{1-\frac{1+\alpha}{2-\gamma}},$$

$$\| \psi_\varepsilon\|_{C\CC^{\frac{1}{2}+\alpha}(\rho^{1+\gamma_1-\sigma-\alpha})}
\lesssim\|\psi_\varepsilon\|_{C\CC^{2-\gamma}(\rho^{\frac{3}{2}+\gamma_1})}^{\frac{\frac{1}{2}+\alpha}{2-\gamma}}
\|\psi_\varepsilon\|_{L^\infty L^\infty(\rho^{\frac{1}{2}+\alpha})}^{1-\frac{\frac{1}{2}+\alpha}{2-\gamma}},$$

$$\| \psi_\varepsilon\|_{C\CC^{\gamma}(\rho^{\frac{1}{2}+2\alpha})}\lesssim
\|\psi_\varepsilon\|_{C\CC^{2-\gamma}(\rho^{\frac{3}{2}+\gamma_1})}^{\frac{\gamma}{2-\gamma}}
\|\psi_\varepsilon\|_{L^\infty L^\infty(\rho^{\frac{1}{2}+\alpha})}^{1-\frac{\gamma}{2-\gamma}},$$
where we use $\alpha>\frac{\gamma}{2-\gamma}(1+\gamma_1-\alpha)$ to have the bound for weight.
For $\gamma_1>4\alpha$ we have $3\alpha+\frac{\alpha-\alpha^2}{2-\gamma}<\gamma_1(1-\frac{\alpha}{2-\gamma})$, which implies that
$$\aligned\|\psi_\varepsilon\|_{C\CC^{\alpha}(\rho^{\frac{1}{2}+\gamma_1-2\alpha-\sigma})}\lesssim \|\psi_\varepsilon\|_{C\CC^{2-\gamma}(\rho^{\frac{3}{2}+\gamma_1})}^{\frac{\alpha}{2-\gamma}}\|\psi\|_{L^\infty L^\infty(\rho^{\frac{1}{2}+\alpha})}^{1-\frac{\alpha}{2-\gamma}}.\endaligned$$

By the above interpolation inequalities and Young's inequality we have

\begin{equation}\aligned&\|\psi_\varepsilon\|_{C\CC^{2-\gamma}(\rho^{\frac{3}{2}+\gamma_1})}
+\|\psi_\varepsilon\|_{C^{1-\frac{\gamma}{2}}L^\infty(\rho^{\frac{3}{2}+\gamma_1})}
\lesssim1+\|\psi_\varepsilon\|_{L^\infty L^\infty(\rho^{\frac{1}{2}+\alpha})}^{3+\delta}+\varepsilon^{\frac{m-3}{2}} \|\psi_\varepsilon\|_{L^\infty L^\infty(\rho^{\frac{3+6\alpha}{2m}})}^m.\endaligned\end{equation}

\subsection{Bound for $\psi$ in  $L^\infty L^\infty(\rho^{\frac{1}{2}+\alpha})$}

In the following we estimate $\Psi$ in $L^\infty L^\infty(\rho^{\frac{3}{2}+3\alpha})$. Most of the terms are similar as the corresponding terms in Section 4.4. We could use $1+\|\psi_\varepsilon\|_{L^\infty L^\infty(\rho^{\frac{1}{2}+\alpha})}^{2+\delta}+\delta_0\varepsilon^{\frac{m-3}{2}} \|\psi_\varepsilon\|_{L^\infty L^\infty(\rho^{\frac{3+6\alpha}{2m}})}^m$ to control them. We omit the similar terms and only give the different ones.

$$\aligned&\|\rmb{2\lambda_{2,\varepsilon} \VV_\leq Y_\varepsilon\succ(\phi_\varepsilon + \psi_\varepsilon)}\|_{L^\infty L^\infty(\rho^{\frac{3}{2}+3\alpha})}
\lesssim\|2\lambda_{2,\varepsilon} \VV_\leq Y_\varepsilon\|_{C\CC^{\gamma}(\rho^{1+2\alpha})}\|\phi_\varepsilon + \psi_\varepsilon\|_{L^\infty L^\infty(\rho^{\frac{1}{2}+\alpha})}\\\lesssim& 2^{(\frac{1}{2}+\gamma+\kappa)\frac{2L}{3}}(1+\| \psi_\varepsilon\|_{L^\infty L^\infty(\rho^{\frac{1}{2}+\alpha})})\lesssim1+\| \psi_\varepsilon\|_{L^\infty L^\infty(\rho^{\frac{1}{2}+\alpha})}^{1+\delta},\endaligned$$

$$\aligned&\|\rmb{3\VV_\leq\ttwo{Y_\varepsilon}\succ(\phi_\varepsilon+\psi_\varepsilon)}\|_{L^\infty L^\infty(\rho^{\frac{3}{2}+3\alpha})}
\lesssim  \|3\VV_\leq\ttwo{Y_\varepsilon}\|_{C\CC^{\gamma}(\rho^{1+2\alpha})}\|\phi_\varepsilon+\psi_\varepsilon\|_{L^\infty L^\infty(\rho^{\frac{1}{2}+\alpha})}
\\\lesssim & 2^{(1+\gamma+\kappa)L}\|\phi_\varepsilon+\psi_\varepsilon\|_{L^\infty L^\infty(\rho^{\frac{1}{2}+\alpha})}\lesssim1+\|\psi_\varepsilon\|_{L^\infty L^\infty(\rho^{\frac{1}{2}+\alpha})}^{2+\delta},\endaligned$$

$$\aligned&\|\rmb{3\VV_\leq\tone{Y_\varepsilon}\succ(-\tttwo{\bar{Y}_\varepsilon}+\phi_\varepsilon+\psi_\varepsilon)^2}\|_{L^\infty L^\infty(\rho^{\frac{3}{2}+3\alpha})}
\lesssim\|\VV_\leq\tone{Y_\varepsilon}\|_{C\CC^{\gamma/2}(\rho^{\frac{1}{2}+\alpha})}\|-\tttwo{\bar{Y}_\varepsilon}+\phi_\varepsilon+\psi_\varepsilon\|_{L^\infty L^\infty(\rho^{\frac{1}{2}+\alpha})}^2
\\\lesssim&2^{(\frac{1+\alpha+\kappa}{2})\frac{4L}{3}}(1+\| \phi_\varepsilon + \psi_\varepsilon\|_{L^\infty L^\infty(\rho^{\frac{1}{2}+\alpha})}^2)\lesssim1+\| \psi_\varepsilon\|_{L^\infty L^\infty(\rho^{\frac{1}{2}+\alpha})}^{2+\delta},\endaligned$$

$$\aligned&\|\rmb{(Y^\emptyset_\varepsilon-\lambda_3)\preccurlyeq(- \tttwo{\bar{Y}_\varepsilon}-\ttthree{Y_\varepsilon}  + \phi_\varepsilon + \psi_\varepsilon)^3}\|_{L^\infty L^\infty(\rho^{\frac{3}{2}+3\alpha})}
\\\lesssim&\|Y^\emptyset_\varepsilon-\lambda_3\|_{C\CC^{-\frac{1}{2}+\frac{3}{2m}-\frac{3\epsilon}{4}}(\rho^{\sigma})}(1+\| (\phi_\varepsilon + \psi_\varepsilon)^3\|_{C\CC^{\frac{1}{2}-\frac{3}{2m}+\epsilon}(\rho^{\frac{3}{2}+3\alpha-\sigma})})
\\\lesssim&1+\varepsilon^\frac{\epsilon}{2}(\varepsilon^{\frac{m-3}{2}}\|\psi_\varepsilon\|^m_{{L^\infty L^\infty(\rho^{\frac{3+6\alpha}{2m}})}})^{\frac{2}{m}}\|\psi_\varepsilon\|^{\frac{\frac{1}{2}-\frac{3}{2m}
+\epsilon}{2-\gamma}}_{C\CC^{2-\gamma}(\rho^{\frac{3}{2}+\gamma_1})}
\|\psi_\varepsilon\|_{{L^\infty L^\infty(\rho^{\frac{1+2\alpha}{2}})}}^{1-\frac{\frac{1}{2}-\frac{3}{2m}+\epsilon}{2-\gamma}}\\\lesssim&1+\|\psi_\varepsilon\|_{{L^\infty L^\infty(\rho^{\frac{1+2\alpha}{2}})}}^{2+\delta}
.\endaligned$$

$$\aligned&\|\rmb{\VV_\leq(Y_\varepsilon^\emptyset-\lambda_3)\succ(- \tttwo{\bar{Y}_\varepsilon}-\ttthree{Y_\varepsilon}  + \phi_\varepsilon + \psi_\varepsilon)^3}\|_{L^\infty L^\infty(\rho^{\frac{3}{2}+3\alpha})}
\\\lesssim&\|\VV_\leq(Y_\varepsilon^\emptyset-\lambda_3)\|_{C\CC^{\frac{\epsilon}{2}}(\rho^{\frac{1}{2}+\sigma})}(1+\|(\phi + \psi)^3\|_{L^\infty  L^\infty(\rho^{1+3\alpha-\sigma})})
\\\lesssim&2^{\frac{1}{2} \frac{3L}{2}}\varepsilon^\frac{\epsilon}{4}(1+(\varepsilon^{\frac{m-3}{2}}\|\psi_\varepsilon\|^m_{{L^\infty L^\infty(\rho^{\frac{3+6\alpha}{2m}})}})^{\frac{1-2\epsilon}{m-3}})\|\psi_\varepsilon\|_{{L^\infty L^\infty(\rho^{\frac{1+2\alpha}{2}})}}^{3-\frac{m-2m\epsilon}{m-3}}\\\lesssim&1+\|\psi_\varepsilon\|_{{L^\infty L^\infty(\rho^{\frac{1+2\alpha}{2}})}}^{2+\delta},\endaligned$$
where the constant we omit is independent of $M$ and we used Lemma 2.2 to have $$\|\VV_\leq(Y_\varepsilon^\emptyset-\lambda_3)\|_{C\CC^{\frac{\epsilon}{2}}(\rho^{\frac{1}{2}+\sigma})}\lesssim 2^{\frac{1}{2} \frac{3L}{2}}\|Y_\varepsilon^\emptyset-\lambda_3\|_{C\CC^{-\frac{1}{2}+\frac{\epsilon}{2}}(\rho^{\sigma})}
\lesssim\varepsilon^\frac{\epsilon}{4}\varepsilon^{\frac{1}{2}-{\epsilon}},$$
and we used $\epsilon<\frac{\alpha}{1+2\alpha}$ to have the following bound for the weight
$$\frac{3+6\alpha}{2}\frac{1-2\epsilon}{m-3}+\frac{1+2\alpha}{2}(3-\frac{m-2m\epsilon}{m-3})<1+3\alpha.$$

For $k=1,...,m$, we have similar estimates for
$$\|\rmb{\varepsilon^{\frac{m-3}{2}}C_m^k(- \tttwo{\bar{Y}_\varepsilon}-\ttthree{Y_\varepsilon}  + \phi_\varepsilon)^k\psi_\varepsilon^{m-k}}\|_{L^\infty L^\infty(\rho^{\frac{3}{2}+3\alpha})}$$ as in Section 4.4 and for $k=1,...,l$, we have similar estimate as in  Section 4.4 for $$\|\rmb{\varepsilon^{\frac{l-3}{2}}C_l^k(- \tttwo{\bar{Y}_\varepsilon}-\ttthree{Y_\varepsilon}  + \phi_\varepsilon)^k\psi_\varepsilon^{l-k}}\|_{L^\infty L^\infty(\rho^{\frac{3}{2}+3\alpha})}.$$
For the term containing $Y^{\emptyset,m-l}$ we have similar estimates as before. The main change comes from the weight. We give the more complicated one

$$\aligned&\|\rmb{\VV_\leq(Y^{\emptyset,m-l}_{1,\varepsilon})\succ\varepsilon^{\frac{l-3}{2}} (- \tttwo{\bar{Y}_\varepsilon}-\ttthree{Y_\varepsilon}  + \phi_\varepsilon + \psi_\varepsilon)^{l-k}}\|_{L^\infty L^\infty(\rho^{\frac{3}{2}+3\alpha})}
\\\lesssim&\|\VV_\leq Y^{\emptyset,m-l}_{1,\varepsilon}\|_{C\CC^{\gamma}(\rho^{\gamma+\epsilon+\sigma+\kappa_0})} \varepsilon^{\frac{l-3}{2}} \|(- \tttwo{\bar{Y}_\varepsilon}-\ttthree{Y_\varepsilon}  + \phi_\varepsilon + \psi_\varepsilon)^{l}\|_{L^\infty L^\infty(\rho^{\frac{3+6\alpha}{2}-\gamma-\epsilon-\sigma-\kappa_0})}\\\lesssim&2^{(\kappa_0+\gamma+\epsilon)\frac{3}{2} L}\varepsilon^{\frac{\sigma}{2}}[1+(\varepsilon^{\frac{m-3}{2}} \|\psi_\varepsilon\|_{L^\infty L^\infty(\rho^{\frac{3+6\alpha}{2m}})}^m)^{\frac{l-3+2\kappa_0}{m-3}}\|\psi_\varepsilon\|^{\frac{3(m-l)-2m\kappa_0}{m-3}}_{L^\infty L^\infty(\rho^{\frac{1+2\alpha}{2}})}]\\\lesssim&1+\|\psi_\varepsilon\|^{2+\delta}_{L^\infty L^\infty(\rho^{\frac{1+2\alpha}{2}})},\endaligned$$
where we used $\kappa_0=2\alpha$ to have
$$\frac{3+6\alpha}{2}-\gamma-\epsilon-\kappa_0>\frac{3+6\alpha}{2}\frac{l-3+2\kappa_0}{m-3}+
(\frac{1}{2}+\alpha)\frac{3(m-l)-2m\kappa_0}{m-3},$$
which gives the bound for the weight.
Similar estimates also hold for the terms containing $Y^{\emptyset,l}_{2,\varepsilon}$.

For the term $\psi^l$ we combine it with the following term and use (1.7) to have
\begin{equation}\aligned&2\sum_{l=4, l \textrm{ even}}^{m_1-1}\|\rmb{c_{l,\varepsilon}\varepsilon^{\frac{l-3}{2}}\psi_\varepsilon^l}\|_{L^\infty L^\infty(\rho^{\frac{3}{2}+3\alpha})}+2\sum_{l=4, l \textrm{ odd}}^{m_1-1}
\|\rmb{(a_{l,\varepsilon}\wedge0)\varepsilon^{\frac{l-3}{2}}\psi_\varepsilon^l}\|_{L^\infty L^\infty(\rho^{\frac{3}{2}+3\alpha})}\\&+2\|\rmb{\int_0^1G_\varepsilon^{(m_1)}(\varepsilon^{\frac{1}{2}}Y_\varepsilon+\theta\varepsilon^{\frac{1}{2}}v_\varepsilon)
\varepsilon^{\frac{m_1-3}{2}} (- \tttwo{\bar{Y}_\varepsilon}-\ttthree{Y_\varepsilon}  + \phi_\varepsilon + \psi_\varepsilon)^{m_1}\tau^{m_1-3}\frac{(1-\tau)^2}{2!}d\tau}\|_{L^\infty L^\infty(\rho^{\frac{3}{2}+3\alpha})}
\\\leq&\sum_{l=4, l \textrm{ even}}^{m_1-1}|c_{l,\varepsilon}|(\varepsilon^{\frac{m-3}{2}}\|\psi_\varepsilon\|^m_{L^\infty L^\infty(\rho^{\frac{3+6\alpha}{2m}})})^{\frac{l-3}{m-3}}\|\psi_\varepsilon\|^{\frac{3(m-l)}{m-3}}_{L^\infty L^\infty(\rho^{\frac{1+2\alpha}{2}})}\\&+\sum_{l=4, l \textrm{ odd}}^{m_1-1}
|a_{l,\varepsilon}\wedge0|(\varepsilon^{\frac{m-3}{2}}\|\psi_\varepsilon\|^m_{L^\infty L^\infty(\rho^{\frac{3+6\alpha}{2m}})})^{\frac{l-3}{m-3}}\|\psi_\varepsilon\|^{\frac{3(m-l)}{m-3}}_{L^\infty L^\infty(\rho^{\frac{1+2\alpha}{2}})}\\&+C_1\frac{1}{m_1!}(\varepsilon^{\frac{m-3}{2}} \|\psi_\varepsilon\|_{L^\infty L^\infty(\rho^{\frac{3+6\alpha}{2m}})}^m)^{\frac{m_1-3}{m-3}}\|\psi_\varepsilon\|^{\frac{3(m-m_1)}{m-3}}_{L^\infty L^\infty(\rho^{\frac{1+2\alpha}{2}})}\\\leq&(C_{0,\varepsilon}-\delta)\varepsilon^{\frac{m-3}{2}} \|\psi_\varepsilon\|_{L^\infty L^\infty(\rho^{\frac{3+6\alpha}{2m}})}^m+(\lambda_3-\delta)\|\psi_\varepsilon\|_{L^\infty L^\infty (\rho^{\frac{1+2\alpha}{2}})}^3.\endaligned\end{equation}

We have

$$\aligned&\|\psi_\varepsilon\|_{L^\infty L^\infty(\rho^{\frac{1}{2}+\alpha})}^3+\varepsilon^{\frac{m-3}{2}}\|\psi_\varepsilon\|_{L^\infty L^\infty(\rho^{\frac{3+6\alpha}{2m}})}^m
\lesssim1+\|\psi_\varepsilon\|_{L^\infty L^\infty(\rho^{\frac{1}{2}+\alpha})}^{2+\delta}+\delta_0\varepsilon^{\frac{m-3}{2}}\|\psi_\varepsilon\|_{L^\infty L^\infty(\rho^{\frac{3+6\alpha}{2m}})}^m,\endaligned$$
which implies that
$$\aligned&\|\psi_\varepsilon\|_{L^\infty L^\infty(\rho^{\frac{1}{2}+\alpha})}^3+\varepsilon^{\frac{m-3}{2}}\|\psi_\varepsilon\|_{L^\infty L^\infty(\rho^{\frac{3+6\alpha}{2m}})}^m
\lesssim1,\endaligned$$
where the constant we omit is independent of $M$.

\vskip.10in

Now we can prove our main results.
\vskip.10in
\indent\emph{Proof of Theorem 1.1 } By Theorem 3.1 we know that when $\varepsilon\rightarrow0, M_0\rightarrow\infty$, $\mathbb{P}(\Omega_{\varepsilon,M_0})\rightarrow1$. For any $\delta>0$ we  could find $\varepsilon_0$ small enough and $M_0$ large enough such that
for $\varepsilon\leq  \varepsilon_0$ $\mathbb{P}(\Omega_{\varepsilon,M_0})\geq1-\delta$.
On such $\Omega_{\varepsilon,M_0}$ we have for $t\leq T_\varepsilon^M$  the following uniform bounds,
 $$\|\phi_\varepsilon\|_{C_{T_\varepsilon^M}\CC^{\frac{1}{2}+\alpha}(\rho^{\frac{3+6\alpha}{2m}})}
+\|\phi_\varepsilon\|_{C_{T_\varepsilon^M}^{\frac{1}{4}+\frac{\alpha}{2}}L^\infty(\rho^{\frac{3+6\alpha}{2m}})}
\lesssim1,$$
$$\|\vartheta_\varepsilon\|_{C_{T_\varepsilon^M}^{\frac{1+\alpha}{2}}L^\infty(\rho^{\frac{3}{2}+\gamma'})}
+\|\vartheta_\varepsilon\|_{C_{T_\varepsilon^M}\CC^{1+\alpha}(\rho^{\frac{3}{2}+\gamma'})}\lesssim1,$$
 $$\|\psi_\varepsilon\|_{C_{T_\varepsilon^M}\CC^{2-\gamma}(\rho^{\frac{3}{2}+\gamma_1})}
+\|\psi_\varepsilon\|_{C_{T_\varepsilon^M}^{1-\frac{\gamma}{2}}L^\infty(\rho^{\frac{3}{2}+\gamma_1})}
\lesssim1,$$
 $$\aligned&\|\psi_\varepsilon\|_{L_{T_\varepsilon^M}^\infty L^\infty(\rho^{\frac{1}{2}+\alpha})}^3+\varepsilon^{\frac{m-3}{2}}\|\psi_\varepsilon\|_{L_{T_\varepsilon^M}^\infty L^\infty(\rho^{\frac{3+6\alpha}{2m}})}^m
\lesssim1,\endaligned$$
 where all the constants we omit are independent of $M$. Then we could choose $M$ large enough and $\varepsilon$ small satisfying $\varepsilon^{\frac{\epsilon}{2}}M\leq 1$ such that the uniform bounds are smaller than $M$, which implies that $ T_\varepsilon^M=T$ and we have the above  uniform bounds hold with $T_\varepsilon^M$ replaced by $T$ on $\Omega_{\varepsilon,M_0}$.

 Based on the uniform bounds  we obtain compactness of the sequence of approximate solutions $(\phi_{\varepsilon},\vartheta_{\varepsilon},\psi_{\varepsilon})$ in a slightly worse space (see the proof of [GH18, Theorem 6.1]). By Theorem 3.1 we could easily  pass to the limit for the terms similar as in [GH18] in the approximation equation. The terms containing $Y^\emptyset_\varepsilon$  go to zero,  since $Y^\emptyset_\varepsilon-\lambda_3$ approximates to zero in a suitable space on $\Omega_{\varepsilon,M_0}$. For the terms like $\varepsilon^{\frac{m-3}{2}}\psi_\varepsilon^m$ coming from $R_\varepsilon$,  we could use the uniform bounds to obtain that it also goes to zero in $L^\infty_T L^\infty(\rho^{(\frac{1}{2}+\alpha)m})$.

  Finally, we obtain that the limit solutions $(\phi,\vartheta,\theta)$ belong to the spaces where the uniform bounds hold by similar argument as in the proof of  [GH18, Theorem 6.1] and satisfy the same equation as that in [GH18]. As mentioned in Remark 1.2 the limit equation in our case contains more terms. But by using the same technique as in [GH18], the solutions are the unique solutions to the dynamical $\Phi^4_3$ model. As a result,  we have proved the solution $u_\varepsilon$ converges to the solution $u(\lambda)$ to the dynamical $\Phi^4_3$ model in probability.
\vskip.10in

\no\textbf{Acknowledgments}
\vskip.10in
 We would  like to thank Professor Massimiliano Gubinelli for proposing this work to us.

 \appendix
  \renewcommand{\appendixname}{Appendix~\Alph{section}}

  \section{Extra estimates}
\subsection{Extra estimates for $\phi$ in  $C\CC^\alpha(\rho^{\frac{3+6\alpha}{2m}})$}
For other terms in $\Phi$ we have the following estimates:
$$\aligned&\|\rmm{3\VV_>\ttwo{Y_\varepsilon}\succ(\phi_\varepsilon+\psi_\varepsilon)}\|_{C\CC^{-2+\alpha}(\rho^{\frac{3+6\alpha}{2m}})}
+\|\rmm{3\VV_>\ttwo{Y_\varepsilon}\prec(\phi_\varepsilon+\psi_\varepsilon)}\|_{C\CC^{-2+\alpha}(\rho^{\frac{3+6\alpha}{2m}})}
\\\lesssim & \|3\VV_>\ttwo{Y_\varepsilon}\|_{C\CC^{-2+\alpha}(\rho^{-\frac{1}{2}-\alpha+\frac{3+6\alpha}{2m}})}\|\phi_\varepsilon+\psi_\varepsilon\|_{L^\infty L^\infty(\rho^{\frac{1}{2}+\alpha})}
\\\lesssim & 2^{-(1-\alpha-\kappa)L}\|\phi_\varepsilon+\psi_\varepsilon\|_{L^\infty L^\infty(\rho^{\frac{1}{2}+\alpha})},\endaligned$$

$$\aligned&\|\rmm{9(\phi_\varepsilon+\psi_\varepsilon)\prec \VV_>\tttwotwo{Y_\varepsilon}}\|_{C\CC^{-2+\alpha}(\rho^{\frac{3+6\alpha}{2m}})}
\\\lesssim&\|\VV_>\tttwotwo{Y_\varepsilon}\|_{C\CC^{-2+\alpha}(\rho^{-\frac{1}{2}-\alpha+\frac{3+6\alpha}{2m}})}\|\phi_\varepsilon+\psi_\varepsilon\|_{L^\infty L^\infty(\rho^{\frac{1}{2}+\alpha})}
\\\lesssim&2^{-(2-\alpha-\kappa)\frac{L}{2}}\|\tttwotwo{Y_\varepsilon}\|_{C\CC^{-\kappa}(\rho^{\sigma})}\| \phi_\varepsilon + \psi_\varepsilon\|_{L^\infty L^\infty(\rho^{\frac{1}{2}+\alpha})},\endaligned$$

$$\aligned&\|\rmm{\VV_>\tone{Y_\varepsilon}\succ(\ttthree{Y_\varepsilon}(\phi_\varepsilon+\psi_\varepsilon))}
\|_{C\CC^{-2+\alpha}(\rho^{\frac{3+6\alpha}{2m}})}
+\|\rmm{\VV_>\tone{Y_\varepsilon}\prec(\ttthree{Y_\varepsilon}(\phi_\varepsilon+\psi_\varepsilon))}\|_{C\CC^{-2+\alpha}(\rho^{\frac{3+6\alpha}{2m}})}
\\\lesssim&\|\VV_>\tone{Y_\varepsilon}\|_{C\CC^{-2+\alpha}(\rho^{-\frac{1}{2}-\alpha+\frac{3+6\alpha}{2m}-\sigma})}
\|\phi_\varepsilon+\psi_\varepsilon\|_{L^\infty L^\infty(\rho^{\frac{1}{2}+\alpha})}
\lesssim2^{-(\frac{3}{2}-\alpha-\kappa)\frac{2L}{3}}\| \phi_\varepsilon + \psi_\varepsilon\|_{L^\infty L^\infty(\rho^{\frac{1}{2}+\alpha})},\endaligned$$

$$\aligned\|\rmm{6(\phi_\varepsilon+\psi_\varepsilon)\prec\VV_>\ttthreeone{Y_\varepsilon}}\|_{C\CC^{-2+\alpha}(\rho^{\frac{3+6\alpha}{2m}})}
\lesssim&\|\VV_>\ttthreeone{Y_\varepsilon}\|_{C\CC^{-2+\alpha}(\rho^{-\frac{1}{2}-\alpha+\frac{3+6\alpha}{2m}})}\|\phi_\varepsilon+\psi_\varepsilon\|_{L^\infty L^\infty(\rho^{\frac{1}{2}+\alpha})}
\\\lesssim&2^{-(2-\alpha-\kappa)\frac{L}{2}}\| \phi_\varepsilon + \psi_\varepsilon\|_{L^\infty L^\infty(\rho^{\frac{1}{2}+\alpha})},\endaligned$$

$$\aligned\|\rmm{3\VV_>\tone{Y_\varepsilon}\succ(-\tttwo{\bar{Y}_\varepsilon}+\phi_\varepsilon+\psi_\varepsilon)^2}\|_{C\CC^{-2+\alpha}
(\rho^{\frac{3+6\alpha}{2m}})}
\lesssim&\|\VV_>\tone{Y_\varepsilon}\|_{C\CC^{-2+\alpha}(\rho^{-1-2\alpha+\frac{3+6\alpha}{2m}})}\|\tttwo{\bar{Y}_\varepsilon}+\phi_\varepsilon
+\psi_\varepsilon\|_{L^\infty L^\infty(\rho^{\frac{1}{2}+\alpha})}^2
\\\lesssim&2^{-(\frac{3}{2}-\alpha-\kappa)\frac{4L}{3}}(1+\| \phi_\varepsilon + \psi_\varepsilon\|_{L^\infty L^\infty(\rho^{\frac{1}{2}+\alpha})}^2).\endaligned$$

\subsection{Extra estimates for $\phi$ in  $C\CC^{\frac{1}{2}+\alpha}(\rho^{\frac{3+6\alpha}{2m}})$}

In this subsection we consider the extra terms which we omit in Section 3.2.
$$\aligned&\|\rmm{3\VV_>\ttwo{Y_\varepsilon}\succ(\phi_\varepsilon+\psi_\varepsilon)}\|_{C\CC^{-\frac{3}{2}+\alpha}(\rho^{\frac{3+6\alpha}{2m}})}
+\|\rmm{3\VV_>\ttwo{Y_\varepsilon}\prec(\phi_\varepsilon+\psi_\varepsilon)}\|_{C\CC^{-\frac{3}{2}+\alpha}(\rho^{\frac{3+6\alpha}{2m}})}
\\\lesssim & \|3\VV_>\ttwo{Y_\varepsilon}\|_{C\CC^{-\frac{3}{2}+\alpha}(\rho^{-\frac{1}{2}-\alpha+\frac{3+6\alpha}{2m}})}\|\phi_\varepsilon+\psi_\varepsilon\|_{L^\infty L^\infty(\rho^{\frac{1}{2}+\alpha})}
\\\lesssim & 2^{-(\frac{1}{2}-\alpha-\kappa)L}\|\phi_\varepsilon+\psi_\varepsilon\|_{L^\infty L^\infty(\rho^{\frac{1}{2}+\alpha})}\lesssim 1+\|\psi_\varepsilon\|_{L^\infty L^\infty(\rho^{\frac{1}{2}+\alpha})}^\delta,\endaligned$$

$$\aligned&\|\rmm{9(\phi_\varepsilon+\psi_\varepsilon)\prec \VV_>\tttwotwo{Y_\varepsilon}}\|_{C\CC^{-\frac{3}{2}+\alpha}(\rho^{\frac{3+6\alpha}{2m}})}
\\\lesssim&\|\VV_>\tttwotwo{Y_\varepsilon}\|_{C\CC^{-\frac{3}{2}+\alpha}(\rho^{-\frac{1}{2}-\alpha+\frac{3+6\alpha}{2m}})}
\|\phi_\varepsilon+\psi_\varepsilon\|_{L^\infty L^\infty(\rho^{\frac{1}{2}+\alpha})}
\\\lesssim&2^{-(\frac{3}{2}-\alpha-\kappa)\frac{L}{2}}\|\tttwotwo{Y_\varepsilon}\|_{C\CC^{-\kappa}(\rho^{\sigma})}\| \phi_\varepsilon + \psi_\varepsilon\|_{L^\infty L^\infty(\rho^{\frac{1}{2}+\alpha})}\lesssim 1+\|\psi_\varepsilon\|_{L^\infty L^\infty(\rho^{\frac{1}{2}+\alpha})}^\delta,\endaligned$$

$$\aligned&\|\rmm{\VV_>\tone{Y_\varepsilon}\succ(\ttthree{Y_\varepsilon}
(\phi_\varepsilon+\psi_\varepsilon))}\|_{C\CC^{-\frac{3}{2}+\alpha}(\rho^{\frac{3+6\alpha}{2m}})}
+\|\rmm{\VV_>\tone{Y_\varepsilon}\prec(\ttthree{Y_\varepsilon}(\phi_\varepsilon+\psi_\varepsilon))}\|_{C\CC^{-\frac{3}{2}+\alpha}(\rho^{\frac{3+6\alpha}{2m}})}
\\\lesssim&\|\VV_>\tone{Y_\varepsilon}\|_{C\CC^{-\frac{3}{2}+\alpha}(\rho^{-\frac{1}{2}-\alpha+\frac{3+6\alpha}{2m}-\sigma})}\|\phi_\varepsilon+\psi_\varepsilon\|_{L^\infty L^\infty(\rho^{\frac{1}{2}+\alpha})}
\lesssim2^{-(1-\alpha-\kappa)\frac{2L}{3}}\| \phi_\varepsilon + \psi_\varepsilon\|_{L^\infty L^\infty(\rho^{\frac{1}{2}+\alpha})}\\\lesssim& 1+\|\psi_\varepsilon\|_{L^\infty L^\infty(\rho^{\frac{1}{2}+\alpha})}^\delta,\endaligned$$

$$\aligned&\|\rmm{6(\phi_\varepsilon+\psi_\varepsilon)\prec\VV_>\ttthreeone{Y_\varepsilon}}\|_{C\CC^{-\frac{3}{2}+\alpha}(\rho^{\frac{3+6\alpha}{2m}})}
\lesssim\|\VV_>\ttthreeone{Y_\varepsilon}\|_{C\CC^{-\frac{3}{2}+\alpha}(\rho^{-\frac{1}{2}-\alpha+\frac{3+6\alpha}{2m}})}
\|\phi_\varepsilon+\psi_\varepsilon\|_{L^\infty L^\infty(\rho^{\frac{1}{2}+\alpha})}
\\\lesssim&2^{-(\frac{3}{2}-\alpha-\kappa)\frac{L}{2}}\| \phi_\varepsilon + \psi_\varepsilon\|_{L^\infty L^\infty(\rho^{\frac{1}{2}+\alpha})}\lesssim 1+\|\psi_\varepsilon\|_{L^\infty L^\infty(\rho^{\frac{1}{2}+\alpha})}^\delta,\endaligned$$

$$\aligned&\|\rmm{3\VV_>\tone{Y_\varepsilon}\succ(-\tttwo{\bar{Y}_\varepsilon}+\phi_\varepsilon+\psi_\varepsilon)^2}
\|_{C\CC^{-\frac{3}{2}+\alpha}(\rho^{\frac{3+6\alpha}{2m}})}
\lesssim\|\VV_>\tone{Y_\varepsilon}\|_{C\CC^{-\frac{3}{2}+\alpha}(\rho^{-1-2\alpha+\frac{3+6\alpha}{2m}})}\|\tttwo{\bar{Y}_\varepsilon}
+\phi_\varepsilon+\psi_\varepsilon\|_{L^\infty L^\infty(\rho^{\frac{1}{2}+\alpha})}^2
\\\lesssim&2^{-(1-\alpha-\kappa)\frac{4L}{3}}(1+\| \phi_\varepsilon + \psi_\varepsilon\|_{L^\infty L^\infty(\rho^{\frac{1}{2}+\alpha})}^2)\lesssim 1+\|\psi_\varepsilon\|_{L^\infty L^\infty(\rho^{\frac{1}{2}+\alpha})}^\delta.\endaligned$$

\subsection{Extra estimates for $\vartheta$}

In this subsection we consider the extra terms which we omit in Section 3.3.
$$\aligned&\|\rmm{3\VV_>\ttwo{Y_\varepsilon}\prec(\phi_\varepsilon+\psi_\varepsilon)}\|_{C\CC^{-1+\alpha}(\rho^{\frac{3}{2}+\gamma'})}
\\\lesssim & \|3\VV_>\ttwo{Y_\varepsilon}\|_{C\CC^{-\frac{3}{2}}}\|\phi_\varepsilon+\psi_\varepsilon\|_{C\CC^{\frac{1}{2}+\alpha}(\rho^{\frac{3}{2}+\gamma'})}
\\\lesssim & 2^{-(\frac{1}{2}-\kappa)L}\|\phi_\varepsilon+\psi_\varepsilon\|_{C\CC^{\frac{1}{2}+\alpha}(\rho^{\frac{3}{2}+\gamma'})}\lesssim 1+\|\psi_\varepsilon\|_{L^\infty L^\infty(\rho^{\frac{1}{2}+\alpha})}^\delta+\|\psi_\varepsilon\|_{C\CC^{\frac{1}{2}+\alpha}(\rho^{\frac{3}{2}+\gamma'})},\endaligned$$

$$\aligned&\|\rmm{9(\phi_\varepsilon+\psi_\varepsilon)\prec \VV_>\tttwotwo{Y_\varepsilon}}\|_{C\CC^{-1+\alpha}(\rho^{\frac{3+6\alpha}{2m}})}
\\\lesssim&\|\VV_>\tttwotwo{Y_\varepsilon}\|_{C\CC^{-1+\alpha}(\rho^{-\frac{1}{2}-\alpha+\frac{3+6\alpha}{2m}})}\|\phi_\varepsilon+\psi_\varepsilon\|_{L^\infty L^\infty(\rho^{\frac{1}{2}+\alpha})}
\\\lesssim&2^{-(1-\alpha-\kappa)\frac{L}{2}}\|\tttwotwo{Y_\varepsilon}\|_{C\CC^{-\kappa}(\rho^{\sigma})}\| \phi_\varepsilon + \psi_\varepsilon\|_{L^\infty L^\infty(\rho^{\frac{1}{2}+\alpha})}\lesssim 1+\|\psi_\varepsilon\|_{L^\infty L^\infty(\rho^{\frac{1}{2}+\alpha})}^\delta,\endaligned$$

$$\aligned&\|\rmm{\VV_>\tone{Y_\varepsilon}\succ(\ttthree{Y_\varepsilon}(\phi_\varepsilon+\psi_\varepsilon))}\|_{C\CC^{-1+\alpha}(\rho^{\frac{3+6\alpha}{2m}})}
+\|\rmm{\VV_>\tone{Y_\varepsilon}\prec(\ttthree{Y_\varepsilon}(\phi_\varepsilon+\psi_\varepsilon))}\|_{C\CC^{-1+\alpha}(\rho^{\frac{3+6\alpha}{2m}})}
\\\lesssim&\|\VV_>\tone{Y_\varepsilon}\|_{C\CC^{-1+\alpha}(\rho^{-\frac{1}{2}-\alpha+\frac{3+6\alpha}{2m}})}\|\phi_\varepsilon+\psi_\varepsilon\|_{L^\infty L^\infty(\rho^{\frac{1}{2}+\alpha})}
\lesssim2^{-(\frac{1}{2}-\alpha-\kappa)\frac{2L}{3}}\| \phi_\varepsilon + \psi_\varepsilon\|_{L^\infty L^\infty(\rho^{\frac{1}{2}+\alpha})}\\\lesssim& 1+\|\psi_\varepsilon\|_{L^\infty L^\infty(\rho^{\frac{1}{2}+\alpha})}^\delta,\endaligned$$

$$\aligned&\|\rmm{6(\phi_\varepsilon+\psi_\varepsilon)\prec\VV_>\ttthreeone{Y_\varepsilon}}\|_{C\CC^{-1+\alpha}(\rho^{\frac{3+6\alpha}{2m}})}
\lesssim\|\VV_>\ttthreeone{Y_\varepsilon}\|_{C\CC^{-1+\alpha}(\rho^{-\frac{1}{2}-\alpha+\frac{3+6\alpha}{2m}})}\|\phi_\varepsilon+\psi_\varepsilon\|_{L^\infty L^\infty(\rho^{\frac{1}{2}+\alpha})}
\\\lesssim&2^{-(1-\alpha-\kappa)\frac{L}{2}}\| \phi_\varepsilon + \psi_\varepsilon\|_{L^\infty L^\infty(\rho^{\frac{1}{2}+\alpha})}\lesssim 1+\|\psi_\varepsilon\|_{L^\infty L^\infty(\rho^{\frac{1}{2}+\alpha})}^\delta,\endaligned$$

$$\aligned&\|\rmm{3\VV_>\tone{Y_\varepsilon}\succ(-\tttwo{\bar{Y}_\varepsilon}+\phi_\varepsilon+\psi_\varepsilon)^2}\|_{C\CC^{-1+\alpha}
(\rho^{\frac{3+6\alpha}{2m}+\frac{1}{2}+\alpha})}
\\\lesssim&\|\VV_>\tone{Y_\varepsilon}\|_{C\CC^{-1+\alpha}(\rho^{-\frac{1}{2}-\alpha+\frac{3+6\alpha}{2m}})}\|-\tttwo{\bar{Y}_\varepsilon}
+\phi_\varepsilon+\psi_\varepsilon\|_{L^\infty L^\infty(\rho^{\frac{1}{2}+\alpha})}^2
\\\lesssim&2^{-(\frac{1}{2}-\alpha-\kappa)\frac{4L}{3}}(1+\| \phi_\varepsilon + \psi_\varepsilon\|_{L^\infty L^\infty(\rho^{\frac{1}{2}+\alpha})}^2)\lesssim 1+\|\psi_\varepsilon\|_{L^\infty L^\infty(\rho^{\frac{1}{2}+\alpha})}^{1+\delta}.\endaligned$$

\subsection{Extra bounds for $\psi$ in $C\CC^{2-\gamma}(\rho^{\frac{3}{2}+\gamma_1})$}
By (4.4) we have

$$\aligned&\|\rmb{3\ttwo{Y_\varepsilon}\circ\psi_\varepsilon}\|_{C\CC^{-\gamma}(\rho^{\frac{3}{2}+\gamma_1})}
+\|\rmb{3\ttwo{Y_\varepsilon}\circ\vartheta_\varepsilon}\|_{C\CC^{-\gamma}(\rho^{\frac{3}{2}+\gamma_1})}
\lesssim\|\psi_\varepsilon\|_{C\CC^{1+\alpha}(\rho^{\frac{3}{2}+\gamma'})}+\|\vartheta_\varepsilon\|_{C\CC^{1+\alpha}(\rho^{\frac{3}{2}+\gamma'})}
\\\lesssim&1+\|\psi_\varepsilon\|_{C\CC^{1+\alpha}(\rho^{\frac{3}{2}+\gamma'})}+\delta_0\| \psi_\varepsilon\|_{C^{1-\gamma}L^\infty(\rho^{\frac{3}{2}+\gamma_1})}+\|\psi_\varepsilon\|_{L^\infty L^\infty(\rho^{\frac{1}{2}+\alpha})}^{1+\delta},\endaligned$$

$$\aligned&\|\rmb{[9\ttwo{Y_\varepsilon}\circ((- \tttwo{\bar{Y}_\varepsilon}-\ttthree{Y_\varepsilon}  + \phi_\varepsilon + \psi_\varepsilon)\Prec \tttwo{Y_\varepsilon})-9\ttwo{Y_\varepsilon}\circ((- \tttwo{\bar{Y}_\varepsilon}-\ttthree{Y_\varepsilon}  + \phi_\varepsilon + \psi_\varepsilon)\prec \tttwo{Y_\varepsilon})]}\|_{C\CC^{-\gamma}(\rho^{\frac{3}{2}+\gamma_1})}\\\lesssim&
\|\ttwo{Y_\varepsilon}\|_{C\CC^{-1-\kappa}(\rho^{\sigma})}\|\tttwo{Y_\varepsilon}\|_{C\CC^{1-\kappa}(\rho^{\sigma})}\|- \tttwo{\bar{Y}_\varepsilon}-\ttthree{Y_\varepsilon}  + \phi_\varepsilon + \psi_\varepsilon\|_{C^{\frac{\alpha+\kappa}{2}}L^\infty(\rho^{\frac{3}{2}+\gamma''})}\lesssim
1+\| \psi_\varepsilon\|_{C^{\frac{\alpha+\kappa}{2}}L^\infty(\rho^{\frac{3}{2}+\gamma''})}\\\lesssim&1+\delta_0\| \psi_\varepsilon\|_{C^{1-\gamma}L^\infty(\rho^{\frac{3}{2}+\gamma_1})}+\|\psi_\varepsilon\|_{L^\infty L^\infty(\rho^{\frac{1}{2}+\alpha})},\endaligned$$

$$\aligned&\|\rmb{-9\mathrm{com}(- \tttwo{\bar{Y}_\varepsilon}-\ttthree{Y_\varepsilon}  + \phi_\varepsilon + \psi_\varepsilon,\tttwo{Y_\varepsilon},\ttwo{Y_\varepsilon})}\|_{C\CC^{-\gamma}(\rho^{\frac{3}{2}+\gamma_1})}
\\\lesssim&1+\| \phi_\varepsilon + \psi_\varepsilon\|_{C\CC^{\frac{1}{2}}(\rho^{\frac{3}{2}+\gamma''})}\lesssim1+\|\psi_\varepsilon\|_{L^\infty L^\infty(\rho^{\frac{1}{2}+\alpha})}^{\delta}+\| \psi_\varepsilon\|_{C\CC^{\frac{1}{2}}(\rho^{\frac{3}{2}+\gamma''})},\endaligned$$

$$\aligned&\|\rmb{6\tone{Y_\varepsilon}\circ(\ttthree{Y_\varepsilon}\preccurlyeq(\phi_\varepsilon+\psi_\varepsilon))}\|_{C\CC^{-\gamma}
(\rho^{\frac{3}{2}+\gamma_1})}\lesssim\| \phi_\varepsilon + \psi_\varepsilon\|_{C\CC^{\frac{1}{2}+\alpha}(\rho^{\frac{3}{2}+\gamma''})}\\\lesssim&1+\|\psi_\varepsilon\|_{L^\infty L^\infty(\rho^{\frac{1}{2}+\alpha})}^{\delta}+\| \psi_\varepsilon\|_{C\CC^{\frac{1}{2}+\alpha}(\rho^{\frac{3}{2}+\gamma''})},\endaligned$$

$$\aligned&\|\rmb{6\mathrm{com}(\phi_\varepsilon+\psi_\varepsilon,\ttthree{Y_\varepsilon},\tone{Y_\varepsilon})}\|_{C\CC^{-\gamma}
(\rho^{\frac{3}{2}+\gamma_1})}\lesssim\| \phi_\varepsilon + \psi_\varepsilon\|_{C\CC^{\frac{1}{2}}(\rho^{\frac{3}{2}+\gamma''})}\lesssim1+\|\psi_\varepsilon\|_{L^\infty L^\infty(\rho^{\frac{1}{2}+\alpha})}^{\delta}+\| \psi_\varepsilon\|_{C\CC^{\frac{1}{2}}(\rho^{\frac{3}{2}+\gamma''})},\endaligned$$
and

$$\aligned&\|\rmb{(- \tttwo{\bar{Y}_\varepsilon}-\ttthree{Y_\varepsilon}  + \phi_\varepsilon)^3}+\rmb{3(- \tttwo{\bar{Y}_\varepsilon}-\ttthree{Y_\varepsilon}  + \phi_\varepsilon)^2\psi}+\rmb{3(- \tttwo{\bar{Y}_\varepsilon}-\ttthree{Y_\varepsilon}  + \phi_\varepsilon)\psi_\varepsilon^2}\|_{C\CC^{-\gamma}(\rho^{\frac{3}{2}+\gamma_1})}\\\lesssim&1+\| \psi_\varepsilon\|_{L^\infty L^\infty(\rho^{\frac{1}{2}+\alpha})}^2.\endaligned$$

Since $\gamma_1>4\alpha$, we have
$$\aligned&\|\rmb{3\VV_\leq\ttwo{Y_\varepsilon}\succ(\phi_\varepsilon+\psi_\varepsilon)}\|_{C\CC^{-\gamma}(\rho^{\frac{3}{2}+\gamma_1})}
\lesssim  \|3\VV_\leq\ttwo{Y_\varepsilon}\|_{C\CC^{-\gamma}(\rho^{1+\gamma_1-\alpha})}\|\phi_\varepsilon+\psi_\varepsilon\|_{L^\infty L^\infty(\rho^{\frac{1}{2}+\alpha})}
\\\lesssim & 2^{(1-\gamma+\kappa)L}\|\phi_\varepsilon+\psi_\varepsilon\|_{L^\infty L^\infty(\rho^{\frac{1}{2}+\alpha})}\lesssim1+\|\psi_\varepsilon\|_{L^\infty L^\infty(\rho^{\frac{1}{2}+\alpha})}^{2+\delta},\endaligned$$

$$\aligned&\|\rmb{3\VV_\leq\ttwo{Y_\varepsilon}\prec(\phi_\varepsilon+\psi_\varepsilon)}\|_{C\CC^{-\gamma}(\rho^{\frac{3}{2}+\gamma_1})}\lesssim  \|3\VV_\leq\ttwo{Y_\varepsilon}\|_{C\CC^{\gamma}(\rho^{1+\alpha+\sigma})}\|\phi_\varepsilon+\psi_\varepsilon\|_{C\CC^{\gamma}(\rho^{\frac{1}{2}+2\alpha})}
\\\lesssim & 2^{(1+\gamma+\kappa)L}\|\phi_\varepsilon+\psi_\varepsilon\|_{C\CC^{\gamma}(\rho^{\frac{1}{2}+2\alpha})}\lesssim 1+\|\psi_\varepsilon\|_{L^\infty L^\infty(\rho^{\frac{1}{2}+\alpha})}^{1+\delta}+\|\psi_\varepsilon\|_{L^\infty L^\infty(\rho^{\frac{1}{2}+\alpha})}^{1+\delta}\|\psi_\varepsilon\|_{C\CC^{\gamma}(\rho^{\frac{1}{2}+2\alpha})},\endaligned$$

$$\aligned&\|\rmb{9(\phi_\varepsilon+\psi_\varepsilon)\prec \VV_\leq\tttwotwo{Y_\varepsilon}}\|_{C\CC^{-\gamma}(\rho^{\frac{3}{2}+\gamma_1})}
\lesssim\|\VV_\leq\tttwotwo{Y_\varepsilon}\|_{C\CC^{-\gamma}(\rho^{1+\gamma_1-\alpha})}\|\phi_\varepsilon+\psi_\varepsilon\|_{L^\infty L^\infty(\rho^{\frac{1}{2}+\alpha})}
\\\lesssim&2^{\kappa\frac{L}{2}}\| \phi_\varepsilon + \psi_\varepsilon\|_{L^\infty L^\infty(\rho^{\frac{1}{2}+\alpha})}\lesssim1+\| \psi_\varepsilon\|_{L^\infty L^\infty(\rho^{\frac{1}{2}+\alpha})}^{1+\delta},\endaligned$$

$$\aligned&\|\rmb{9(\phi_\varepsilon+\psi_\varepsilon)\succcurlyeq \tttwotwo{Y_\varepsilon}}\|_{C\CC^{-\gamma}(\rho^{\frac{3}{2}+\gamma_1})}
\lesssim\|\phi_\varepsilon+\psi_\varepsilon\|_{C\CC^{\gamma+\kappa}(\rho^{\frac{3}{2}+\gamma})}
\lesssim1+\|\psi_\varepsilon\|_{C\CC^{\gamma+\kappa}(\rho^{\frac{3}{2}+\gamma})},\endaligned$$

$$\aligned&\|\rmb{\VV_\leq\tone{Y_\varepsilon}\succ(\ttthree{Y_\varepsilon}(\phi_\varepsilon+\psi_\varepsilon))}
\|_{C\CC^{-\gamma}(\rho^{\frac{3}{2}+\gamma_1})}
\lesssim\|\VV_\leq\tone{Y_\varepsilon}\|_{C\CC^{-\gamma}(\rho^{1+\gamma_1-\alpha-\sigma})}\|\phi_\varepsilon+\psi_\varepsilon\|_{L^\infty L^\infty(\rho^{\frac{1}{2}+\alpha})}
\\\lesssim&2^{(\frac{1}{2}-\gamma+\kappa)\frac{2L}{3}}\| \phi_\varepsilon + \psi_\varepsilon\|_{L^\infty L^\infty(\rho^{\frac{1}{2}+\alpha})}\lesssim1+\|  \psi_\varepsilon\|_{L^\infty L^\infty(\rho^{\frac{1}{2}+\alpha})}^{1+\delta},\endaligned$$

$$\aligned&\|\rmb{\VV_\leq\tone{Y_\varepsilon}\prec(\ttthree{Y_\varepsilon}(\phi_\varepsilon+\psi_\varepsilon))}\|_{C\CC^{-\gamma}
(\rho^{\frac{3}{2}+\gamma_1})}
\lesssim\|\VV_\leq\tone{Y_\varepsilon}\|_{C\CC^{-\frac{1}{2}+\gamma+\kappa}(\rho^{\alpha+2\kappa})}
\|\phi_\varepsilon+\psi_\varepsilon\|_{C\CC^{\frac{1}{2}+\gamma+\kappa}(\rho^{\frac{3}{2}+\gamma''})}
\\\lesssim&1+\|  \psi_\varepsilon\|_{L^\infty L^\infty(\rho^{\frac{1}{2}+\alpha})}^{\delta}+\|  \psi_\varepsilon\|_{L^\infty L^\infty(\rho^{\frac{1}{2}+\alpha})}^{\delta}\|\psi_\varepsilon\|_{C\CC^{\frac{1}{2}+\gamma+\kappa}(\rho^{\frac{3}{2}+\gamma''})},\endaligned$$

$$\aligned\|\rmb{6(\phi_\varepsilon+\psi_\varepsilon)\prec\VV_\leq\ttthreeone{Y_\varepsilon}}\|_{C\CC^{-\gamma}(\rho^{\frac{3}{2}+\gamma_1})}
\lesssim&\|\VV_\leq\ttthreeone{Y_\varepsilon}\|_{C\CC^{-\gamma}(\rho)}\|\phi_\varepsilon+\psi_\varepsilon\|_{L^\infty L^\infty(\rho^{\frac{1}{2}+\alpha})}
\\\lesssim&2^{\alpha\frac{L}{2}}\| \phi_\varepsilon + \psi_\varepsilon\|_{L^\infty L^\infty(\rho^{\frac{1}{2}+\alpha})}\lesssim1+\| \psi_\varepsilon\|_{L^\infty L^\infty(\rho^{\frac{1}{2}+\alpha})}^{1+\delta},\endaligned$$

$$\aligned\|\rmb{6(\phi_\varepsilon+\psi_\varepsilon)\succcurlyeq\ttthreeone{Y_\varepsilon}}\|_{C\CC^{-\gamma}(\rho^{\frac{3}{2}+\gamma_1})}
\lesssim&\|\phi_\varepsilon+\psi_\varepsilon\|_{C\CC^{\gamma+\kappa}(\rho^{\frac{3}{2}+\gamma''})}
\\\lesssim&1+\|\psi_\varepsilon\|_{C\CC^{\gamma+\kappa}(\rho^{\frac{3}{2}+\gamma''})},\endaligned$$

$$\aligned&\|\rmb{3\VV_\leq\tone{Y_\varepsilon}\succ(-\tttwo{\bar{Y}_\varepsilon}+\phi_\varepsilon+\psi_\varepsilon)^2}\|_{C\CC^{-\gamma}
(\rho^{\frac{3}{2}+\gamma_1})}
\lesssim\|\VV_\leq\tone{Y_\varepsilon}\|_{C\CC^{-\gamma}(\rho^{\frac{1}{2}+\gamma_1-2\alpha})}
\|-\tttwo{\bar{Y}_\varepsilon}+\phi_\varepsilon+\psi_\varepsilon\|_{L^\infty L^\infty(\rho^{\frac{1}{2}+\alpha})}^2
\\\lesssim&2^{(\frac{1}{2}+\alpha)\frac{4L}{3}}(1+\| \phi_\varepsilon + \psi_\varepsilon\|_{L^\infty L^\infty(\rho^{\frac{1}{2}+\alpha})}^2)\lesssim1+\| \psi_\varepsilon\|_{L^\infty L^\infty(\rho^{\frac{1}{2}+\alpha})}^{2+\delta},\endaligned$$

$$\aligned&\|\rmb{3\tone{Y_\varepsilon}\preccurlyeq(-\tttwo{\bar{Y}_\varepsilon}+\phi_\varepsilon+\psi_\varepsilon)^2}
\|_{C\CC^{-\gamma}(\rho^{\frac{3}{2}+\gamma_1})}
\lesssim\|(-\tttwo{\bar{Y}_\varepsilon}+\phi_\varepsilon+\psi_\varepsilon)^2\|_{C\CC^{\frac{1}{2}+\gamma+\kappa}(\rho^{\frac{3}{2}+\gamma_1-\sigma})}
\\\lesssim&(1+\| \phi_\varepsilon + \psi_\varepsilon\|_{L^\infty L^\infty(\rho^{\frac{1}{2}+\alpha})})(1+\| \phi + \psi_\varepsilon\|_{C\CC^{\frac{1}{2}+\gamma+\kappa}(\rho^{1+\gamma_1-\sigma-\alpha})})\\\lesssim&(1+\| \psi_\varepsilon\|_{L^\infty L^\infty(\rho^{\frac{1}{2}+\alpha})})
(1+\| \psi_\varepsilon\|_{L^\infty L^\infty(\rho^{\frac{1}{2}+\alpha})}^\delta+\| \psi_\varepsilon\|_{C\CC^{\frac{1}{2}+\gamma+\kappa}(\rho^{1+\gamma_1-\sigma-\alpha})}).\endaligned$$

      \section{Global well-posedness for smooth noise case}

\begin{proposition}\label{prop:42d}
Let $T>0, C_0>0$,  $\eta\in C^{\infty}([0,T]\times\mathbb{T}^d)$ and $\varphi_{0}\in C^{\infty}(\mathbb{T}^d)$. There exists a unique classical solution $\varphi\in C^{\infty}([0,T]\times \mathbb{T}^{d})$  to
\begin{equation}
\LL \varphi  + C_0\varphi^m =-G(\varphi) +\eta,\qquad\varphi(0)=\varphi_{0}.
\end{equation}
\end{proposition}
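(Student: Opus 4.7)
The plan is to split the argument into three stages: local well-posedness in a classical sense via a contraction mapping, a global a priori $L^\infty$ bound via a maximum principle exploiting the structure $C_0 \varphi^m$ with $C_0>0$ and $m$ odd, and finally a parabolic bootstrap to obtain $C^\infty$ regularity. Uniqueness will follow directly from the local Lipschitz property of $\varphi\mapsto C_0\varphi^m+G(\varphi)$ combined with Gronwall's lemma on the difference of two solutions.

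For the first stage, I would set up the mild formulation
\[
\varphi(t)=P_t\varphi_0+\int_0^t P_{t-s}\bigl(-C_0\varphi^m(s)-G(\varphi(s))+\eta(s)\bigr)\,\dd s,
\]
where $P_t$ is the heat semigroup generated by $\Delta-\mu$ on $\mathbb{T}^d$, and run a Banach fixed point argument in a closed ball of $C([0,T_0];L^\infty(\mathbb{T}^d))$ for $T_0$ sufficiently small depending on $\|\varphi_0\|_{L^\infty}$ and $\|\eta\|_{L^\infty_T L^\infty}$. Since $x\mapsto C_0x^m+G(x)$ is smooth and therefore locally Lipschitz, this is straightforward. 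A classical parabolic Schauder argument then upgrades this $L^\infty$ solution to a classical smooth solution on $[0,T_0]\times\mathbb{T}^d$, using smoothness of $\eta$ and $\varphi_0$.

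The crucial second stage is the global $L^\infty$ bound. Assuming $\varphi$ attains its positive maximum $M=\varphi(t^*,x^*)$ with $t^*>0$, the standard maximum-principle conditions $\partial_t\varphi\geq0$, $-\Delta\varphi\geq0$ at $(t^*,x^*)$ give
\[
C_0 M^m+\mu M\leq -G(M)+\|\eta\|_{L^\infty_TL^\infty}.
\]
Because $G$ has bounded $m_1$-th derivative with $m_1\leq m$, we have $|G(x)|\lesssim 1+|x|^{m_1}$ with $m_1\leq m$, so the leading term $C_0 M^m$ on the left dominates any term arising from $G(M)$ once $M$ exceeds some threshold independent of $t$. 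Applying the same reasoning to $-\varphi$ (at a negative minimum; here one uses that $m$ is odd so $\varphi^m$ changes sign correctly) yields a two-sided bound $\|\varphi\|_{L^\infty_TL^\infty}\leq C(\|\varphi_0\|_{L^\infty},\|\eta\|_{L^\infty_TL^\infty},C_0,T,\mu,G)$. This bound is independent of $T_0$, which rules out blow-up and allows the local solution to be extended to the full interval $[0,T]$.

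Once the $L^\infty$ bound is established on $[0,T]$, the right-hand side $-C_0\varphi^m-G(\varphi)+\eta$ is bounded in $L^\infty_TL^\infty$. Then standard parabolic regularity (Schauder estimates applied iteratively, using that $\eta\in C^\infty$ and $\varphi_0\in C^\infty$) produces $\varphi\in C^\infty([0,T]\times\mathbb{T}^d)$. The main technical point to verify carefully is the maximum-principle step, since one must make sure that the cubic/higher-order dissipation genuinely beats the growth of $G$; this is precisely where the assumption $C_0>0$, $m$ odd, and $m_1\leq m$ enters, and it parallels the argument already used in the proof of Lemma~2.4.
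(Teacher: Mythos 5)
Your plan is correct in outline but takes a genuinely different route from the paper. The paper does not use a fixed point argument or a maximum principle for this torus statement: it obtains existence and uniqueness of weak solutions by a monotonicity argument, derives the a priori bound by testing the equation with $\varphi^{2p-1}$ (an $L^{2p}$ energy estimate, using that the assumptions give $|G(\varphi)|\leq C+(C_0-\kappa)|\varphi|^{m}$), and then upgrades to a classical solution by Sobolev embedding together with the Schauder estimate of Lemma 2.3 applied twice. Your scheme (mild local solution by contraction, global $L^{\infty}$ bound by the maximum principle in the spirit of Lemma 2.4, Schauder bootstrap, Gronwall uniqueness) is a legitimate alternative on the torus; the maximum principle replaces the paper's $L^{p}$ testing and your continuation argument replaces the monotonicity step, and in fact it is closer to how the paper treats the weighted whole-space version in Proposition B.2.

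The one step that would fail as written is the claim that $|G(x)|\lesssim 1+|x|^{m_1}$ with $m_1\leq m$ already forces $C_0M^{m}$ to dominate $G(M)$ for large $M$. This is fine when $m_1<m$ (Young's inequality), but the assumptions allow $m_1=m$, in which case one only has $|G(x)|\leq C_{\delta}+(\tfrac{C_1}{m_1!}+\delta)|x|^{m}$, and domination requires the quantitative relation between $C_1$ and $C_0$ encoded in assumption (1.7) (which for $m_1=m$ forces $\tfrac{2C_1}{m!}\leq C_0-\delta$); the inequality $m_1\leq m$ alone does not suffice. This is exactly how the paper argues: it invokes $|G(\varphi)|\leq C+(C_0-\kappa)|\varphi|^{m}$, and in the proof of Proposition B.2 it states explicitly that for $m_1=m$ one must use (1.7). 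Add that invocation to your maximum-principle step. A minor further caveat: since $F_\varepsilon$ (hence $G$) is only assumed $C^9$, the iterative Schauder bootstrap cannot literally produce $C^{\infty}$ regularity from the nonlinearity; what it yields is a classical solution of class $\CC^{2+\alpha}$ in space with the matching time regularity, which is also all the paper's own proof establishes.
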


\begin{proof} The existence and uniqueness of weak solutions can be obtained by monotonicity argument.
We then prove a priori estimate in $L^p$. We test by $\varphi^{2p-1}$ to obtain
$$\aligned
&\frac{1}{2p}\partial_{t}\int_{\mathbb{T}^{d}} |\varphi|^{2p}dx+(2p-1)\int_{\mathbb{T}^{d}} |\varphi|^{2p-2}|\nabla\varphi|^2\,dx+C_0\int_{\mathbb{T}^{d}} |\varphi|^{2p+m-1}dx
\\\leq&\int_{\mathbb{T}^{d}} |G(\varphi)|\varphi^{2p-1}dx+\int_{\mathbb{T}^{d}} |\eta||\varphi|^{2p-1}dx.\endaligned$$
By our assumption on $G(\varphi)$ we know that for $\kappa>0$
$|G(\varphi)|\leq C+(C_0-\kappa)|\varphi|^{m}$, which implies the right hand side of the above inequality can be controlled by
 $$\aligned
(C_0-\kappa)\int_{\mathbb{T}^{d}} |\varphi|^{2p+m-1}dx+C\int_{\mathbb{T}^{d}} |\varphi|^{2p-1}dx.\endaligned$$
Then we have
$$\aligned
&\frac{1}{2p}\partial_{t}\int_{\mathbb{T}^{d}} |\varphi|^{2p}dx+(2p-1)\int_{\mathbb{T}^{d}} |\varphi|^{2p-2}|\nabla\varphi|^2\,dx+C_0\int_{\mathbb{T}^{d}} |\varphi|^{2p+m-1}dx\leq C_{T,p}.\endaligned$$
By using Sobolev embedding we know that $\varphi, \varphi^m,  G(\varphi)\in L_T^\infty \CC^{-\alpha}(\mathbb{T}^{d})$ for $\alpha>0$ and small enough. Then using Lemma 2.3 on the torus case, we know that $\varphi\in L_T^\infty \CC^{2-\alpha}(\mathbb{T}^d)\cap C_T^{(2-\alpha)/2}L^\infty(\mathbb{T}^d)$.
Then by simple calculation we obtain that  $ \varphi^m,  G(\varphi)\in L_T^\infty \CC^{\alpha}(\mathbb{T}^{d})$, which implies that $\varphi\in L_T^\infty \CC_T^{2+\alpha}(\mathbb{T}^d)\cap C^{(2+\alpha)/2}L^\infty(\mathbb{T}^d)$ is a classical solution to (A.1).

\end{proof}

\begin{proposition}
Let $T>0, C_0>0$ and let $\rho$ be a polynomial weight,   $\eta\in C_T\CC^\gamma(\rho^{\frac{3}{2}+\gamma_1})\cap L_T^\infty L^\infty(\rho^{\frac{3+6\alpha}{2}}) $ and $\varphi_{0}\in \CC^{2+\gamma}(\rho^{\frac{3}{2}+\gamma_1})\cap L^\infty(\rho^{\frac{3+6\alpha}{2m}})$. There exists a unique classical solution $\varphi\in C_T\CC^{2+\gamma}(\rho^{\frac{3}{2}+\gamma_1})\cap C_T^1L^\infty(\rho^{\frac{3}{2}+\gamma_1})\cap L_T^\infty L^\infty (\rho^{\frac{3+6\alpha}{2m}})$  to
\begin{equation}
\LL \varphi  + C_0\varphi^m =-G(\varphi) +\eta,\qquad\varphi(0)=\varphi_{0}.
\end{equation}
\end{proposition}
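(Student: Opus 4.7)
\proof[Proof plan for Proposition B.2]
The plan is to build the solution by approximation on large tori and to extract uniform estimates on the weighted space directly from a variant of the maximum principle of Lemma 2.4. More precisely, for each $N\in\mathbb{N}$, restrict (or periodize) $\eta$ and $\varphi_0$ to smooth data $\eta_N,\varphi_{0,N}$ on the torus $\mathbb{T}^3_N$ of side length $N$. By Proposition B.1 there exists a unique smooth classical solution $\varphi_N$ of $\LL\varphi_N+C_0\varphi_N^m=-G(\varphi_N)+\eta_N$ with $\varphi_N(0)=\varphi_{0,N}$. I will view $\varphi_N$ as a function on $\mathbb{R}^3$ (supported on $\mathbb{T}^3_N$ or extended periodically) and prove uniform estimates independent of $N$.

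The first step is a weighted $L^\infty$ bound. Following the argument of Lemma 2.4, set $\tilde\varphi_N=\varphi_N\rho^{(3+6\alpha)/(2m)}$ and test at a global maximum point to obtain a pointwise inequality
\[
  C_0\,|\tilde\varphi_N|^{m}\leq \|\eta\|_{L_T^\infty L^\infty(\rho^{(3+6\alpha)/2})}+c_\rho\|\tilde\varphi_N\|_{L_T^\infty L^\infty}+|G(\varphi_N)|\rho^{(3+6\alpha)/2},
\]
and the extra term $|G(\varphi_N)|\rho^{(3+6\alpha)/2}$ is absorbed into $C_0\varphi_N^m$ by the dissipativity condition (1.7) (taking $y=0$ in that inequality gives $|G|\leq (C_0-\delta)\varphi^m+M_\delta$). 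Combined with the symmetric estimate for $-\tilde\varphi_N$ and an initial-data bound at $t=0$, this yields
\[
  \|\varphi_N\|_{L_T^\infty L^\infty(\rho^{(3+6\alpha)/(2m)})}\lesssim 1+\|\varphi_{0,N}\|_{L^\infty(\rho^{(3+6\alpha)/(2m)})}+\|\eta_N\|_{L_T^\infty L^\infty(\rho^{(3+6\alpha)/2})}^{1/m},
\]
uniformly in $N$.

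The second step upgrades regularity by Schauder. Once the weighted $L^\infty$ bound is in hand, both $C_0\varphi_N^m$ and $G(\varphi_N)$ lie uniformly in $L_T^\infty L^\infty(\rho^{(3+6\alpha)/2})\hookrightarrow L_T^\infty\CC^\gamma(\rho^{3/2+\gamma_1})$, so Lemma 2.3 applied to $\LL\varphi_N=-C_0\varphi_N^m-G(\varphi_N)+\eta_N$ gives a uniform bound in $C_T\CC^{2+\gamma}(\rho^{3/2+\gamma_1})\cap C_T^1 L^\infty(\rho^{3/2+\gamma_1})$. Using the compact embedding in (2.1), I extract a subsequence converging in $C_T\CC^{2+\gamma-}(\rho^{3/2+\gamma_1+})$ to some $\varphi$ belonging to the target space. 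The nonlinear terms $\varphi_N^m$ and $G(\varphi_N)$ pass to the limit by dominated convergence and continuity, so $\varphi$ solves the equation classically. Uniqueness is straightforward: for two solutions $\varphi_1,\varphi_2$ with weighted $L^\infty$ bounds, testing the difference $w=\varphi_1-\varphi_2$ against $w\rho^{2\nu}$ and using monotonicity of $x\mapsto x^m$ together with the local Lipschitz property of $G$ on bounded sets gives a Gronwall inequality yielding $w\equiv 0$.

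The main obstacle will be the weighted maximum-principle step: one must choose the weight exponent $(3+6\alpha)/(2m)$ carefully so that the polynomial gain from $C_0\varphi_N^m$ compensates both the weight-derivative terms $\rho^{-k}(\Delta\rho^{(3+6\alpha)/(2m)})\tilde\varphi_N$ and the non-leading polynomial terms in $G$; this is exactly where the structural assumption (1.7) is essential. A secondary subtlety is that in the torus approximation $G$ may still have super-cubic growth, so one needs the comparison $|G|\leq (C_0-\delta)\varphi^m+M_\delta$ to be genuinely pointwise with $N$-independent constants, which is why the paper formulated (1.7) in the way it did. Everything else (Schauder, compactness, uniqueness by monotonicity) is standard once the uniform weighted $L^\infty$ bound is secured.
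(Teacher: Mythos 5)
Your overall skeleton (periodization to $\mathbb{T}^3_N$, a Lemma 2.4--type weighted maximum principle using (1.7) to absorb $G$, Schauder via Lemma 2.3, compactness, then an energy argument for uniqueness) is exactly the paper's route, but two of your steps would fail as written. First, the regularity upgrade: you claim $C_0\varphi_N^m$ and $G(\varphi_N)$ lie in $L_T^\infty L^\infty(\rho^{\frac{3+6\alpha}{2}})\hookrightarrow L_T^\infty\CC^\gamma(\rho^{\frac{3}{2}+\gamma_1})$. There is no such embedding for $\gamma>0$: a weighted $L^\infty$ bound gives no H\"older regularity. The paper instead estimates $\|\varphi^m\|_{C_T\CC^\gamma(\rho^{3/2+\gamma_1})}+\|G(\varphi)\|_{C_T\CC^\gamma(\rho^{3/2+\gamma_1})}\lesssim (1+\|\varphi\|_{L_T^\infty L^\infty(\rho^{(3+6\alpha)/(2m)})}^{m-1})\|\varphi\|_{C_T\CC^\gamma(\cdot)}$, then interpolates the $\CC^\gamma$ norm between $L^\infty L^\infty$ and $C_T\CC^{2+\gamma}(\rho^{3/2+\gamma_1})$ and uses Young's inequality to absorb a small multiple $\delta_0\|\varphi\|_{C_T\CC^{2+\gamma}(\rho^{3/2+\gamma_1})}$ into the left-hand side of the Schauder estimate. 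Without this interpolation-and-absorption step (or an equivalent bootstrap as in the torus case B.1), your uniform $C_T\CC^{2+\gamma}\cap C_T^1L^\infty$ bound does not follow.

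Second, uniqueness: testing $w=\varphi_1-\varphi_2$ against $w\rho^{2\nu}$ with a \emph{static} weight and invoking ``local Lipschitzness of $G$ on bounded sets'' does not close a Gronwall loop here, because the solutions are bounded only in $L^\infty(\rho^{\frac{3+6\alpha}{2m}})$, hence grow polynomially in $x$; consequently $|G'(\varphi_i)(x)|\lesssim 1+|\varphi_i(x)|^{m_1-1}$ is an \emph{unbounded} coefficient in space, and $\int |G'| \,|w|^2\rho^{2\nu}$ cannot be dominated by a constant times $\|w\|_{L^2(\rho^\nu)}^2$ (the weight terms coming from $\Delta$ are bounded and supply no compensating damping, and monotonicity of $x\mapsto x^m$ does not help since $G$ is not monotone). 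The paper resolves exactly this point with the time-dependent weight $\pi(t,x)=\exp(-t\rho^{-2b}(x))$, $b\in(0,1/2)$, borrowed from [GH18]: the identity $\partial_t\pi=-\pi\rho^{-2b}$ produces the good term $\|\rho^{-b}u\|_{L^2(\pi)}^2$ on the left, which absorbs the polynomially growing factor $(1+\|\varphi_1\|_{L^\infty(\rho^{b/(m_1-1)})}^{m_1-1}+\|\varphi_2\|_{L^\infty(\rho^{b/(m_1-1)})}^{m_1-1})$, after which Gronwall applies. You need this (or some substitute damping mechanism) for the uniqueness claim; the rest of your outline, including the use of $y=0$ in (1.7) together with Young's inequality to get $|G(\varphi)|\le (C_0-\delta)|\varphi|^m+M_\delta$ in the maximum-principle step, matches the paper.
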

\begin{proof}
Consider the periodization $\eta_M$ on $\mathbb{T}^d_M$. By Proposition A.1 there exists a classical solution $\varphi_M$ to (A.2) with $\eta$ replaced by $\eta_M$. In the following we obtain the uniform estimates for $\varphi_M$. Since the estimate is independent of $M$, we omit $M$ for simplicity. By similar argument as in the proof of Lemma 2.4 we have
$$\aligned C_0\|\varphi\|_{L_T^{\infty}L^{\infty}(\rho^{\frac{3+6\alpha}{2m}})}^m \leq C_0\|\varphi_{0}\|_{L^{\infty}(\rho^{\frac{3+6\alpha}{2m}})}^m+ \|G(\varphi)\|_{L_T^{\infty}L^{\infty}(\rho^{\frac{3+6\alpha}{2}})}+\|\eta\|_{L_T^{\infty}L^{\infty}(\rho^{\frac{3+6\alpha}{2}})}.\endaligned$$
By Assumption 1 in introduction we know that
$$\|G(\varphi)\|_{L_T^{\infty}L^{\infty}(\rho^{\frac{3+6\alpha}{2}})}\leq C_\delta+(\frac{C_1}{m_1!}+\delta)\|\varphi\|_{L_T^{\infty}L^{\infty}(\rho^{\frac{3+6\alpha}{2m}})}^{m_1}\leq C_\delta+(C_0-\delta)\|\varphi\|_{L_T^{\infty}L^{\infty}(\rho^{\frac{3+6\alpha}{2m}})}^{m},$$
where for $m_1<m$ we can use Young's inequality and for $m_1=m$ we use (1.7) in the last inequality. Now we have
$$\aligned \|\varphi\|_{L_T^{\infty}L^{\infty}(\rho^{\frac{3+6\alpha}{2m}})}^m\lesssim\|\varphi_{0}\|_{L^{\infty}(\rho^{\frac{3+6\alpha}{2m}})}^m
+\|\eta\|_{L_T^{\infty}L^{\infty}(\rho^{\frac{3+6\alpha}{2}})}.\endaligned$$
Moreover, we have
$$\aligned&\|G(\varphi)\|_{C_T\CC^{\gamma}(\rho^{\frac{3}{2}+\gamma_1})}+\| \varphi^m\|_{C\CC^{\gamma}(\rho^{\frac{3}{2}+\gamma_1})}\lesssim (1+\|\varphi\|_{L^\infty_T L^\infty(\rho^{\frac{3+6\alpha}{2m}})}^{m-1})\|\varphi\|_{C_T\CC^\gamma(\rho^{\frac{3+6\alpha}{2m}+\gamma_1-3\alpha})}\\\lesssim& (1+\|\varphi\|_{L_T^\infty L^\infty(\rho^{\frac{3+6\alpha}{2m}})}^{m-\gamma})\|\varphi\|_{C_T\CC^{2+\gamma}(\rho^{\frac{3}{2}+\gamma_1})}^\gamma
\\\lesssim& \|\varphi_{0}\|_{L^{\infty}(\rho^{\frac{3+6\alpha}{2m}})}^{m+1}
+\|\eta\|_{L_T^{\infty}L^{\infty}(\rho^{\frac{3+6\alpha}{2}})}^2+1+\delta_0\|\varphi\|_{C_T\CC^{2+\gamma}(\rho^{\frac{3}{2}+\gamma_1})},\endaligned$$
for $0<\delta_0<1$. Now we have the following uniform estimate by Lemma 2.3
$$\aligned&\| \varphi\|_{C_T\CC^{2+\gamma}(\rho^{\frac{3}{2}+\gamma_1})}+\| \varphi\|_{C^{1}L^\infty(\rho^{\frac{3}{2}+\gamma_1})}
\\\lesssim& \|\varphi_{0}\|_{L^{\infty}(\rho^{\frac{3+6\alpha}{2m}})}^{m+1}+\|\varphi_{0}\|_{\CC^{2+\gamma}(\rho^{\frac{3}{2}+\gamma_1})}
+\|\eta\|_{L_T^{\infty}L^{\infty}(\rho^{\frac{3+6\alpha}{2}})}^2+\|\eta\|_{C_T\CC^{\gamma}(\rho^{\frac{3}{2}+\gamma_1})}+1.\endaligned$$
Since the constant we omit in the above estimate is independent of $M$, we can obtain compactness of the approximation sequence $\varphi_M$ in a slightly worse space, which allows to pass to the limit in the approximation equation. We can also obtain the solution belong to the spaces where the uniform bounds hold. For the uniqueness in the above weighted space we can also choose time dependent weight $\pi(t,x)=\exp(-t\rho^{-2b}(x))$ for $\rho=\langle x\rangle^{-1}$ and $b\in (0,1/2)$ as in [GH18].  Take two different solutions $\varphi_1$ and $\varphi_2$ starting from the same initial data $\varphi_0$ and satisfying the above bounds. Set $u:=\varphi_1-\varphi_2$.
Then we have
$$\LL u  + C_0(\varphi_1^m-\varphi_1^m) =-G(\varphi_1)+G(\varphi_2).$$
Now we take inner product with $\pi^2 u$ in $L^2$ and use $\partial_t \pi=-\pi \rho^{-2b}$ to have
$$\frac{1}{2}\partial_t\|u\|_{L^2(\pi)}^2+\|\nabla u\|_{L^2(\pi)}^2 +\|\rho^{-b}u\|_{L^2(\pi)}^2  \leq \langle-G(\varphi_1)+G(\varphi_2),u\pi^2\rangle+C_\delta \|u\|_{L^2(\pi)}^2+\delta\|\nabla u\|_{L^2(\pi)}^2,$$
where we use $|\frac{\nabla \pi}{\pi}|\lesssim1$.
$$\aligned|\langle-G(\varphi_1)+G(\varphi_2),u\pi^2\rangle|\lesssim & (1+\|\varphi_1\|_{L^{\infty}(\rho^{\frac{b}{m_1-1}})}^{m_1-1}+\|\varphi_2\|_{L^{\infty}(\rho^{\frac{b}{m_1-1}})}^{m_1-1})\|\rho^{-\frac{b}{2}}u\|_{L^2(\pi)}^2
\\\lesssim & \delta\|\rho^{-b}u\|_{L^2(\pi)}^2+C_\delta\|u\|_{L^2(\pi)}^2.\endaligned$$
Now the uniqueness follows by Gronwall's Lemma.
\end{proof}

\end{document}